\documentclass[12pt,reqno, a4paper]{amsart}

\usepackage{amsmath,amsthm}
\usepackage{amssymb}
\usepackage{mathtools}
\numberwithin{equation}{section}

\usepackage{latexsym}
\usepackage{hyperref}

\usepackage{cite}
\usepackage{enumitem}

\usepackage[margin=2.5cm]{geometry}
\usepackage{eucal}

\usepackage{tikz-cd}
\usepackage{tikz-3dplot}
\usepackage{pgfplots}
\pgfplotsset{compat=1.18}
\usetikzlibrary{positioning}

\usepackage{todonotes}

\mathcode`\:=\string"603A

\theoremstyle{plain}
\newtheorem{Theorem}{Theorem}[section]
\newtheorem{Proposition}[Theorem]{Proposition}
\newtheorem{Lemma}[Theorem]{Lemma}
\newtheorem{Corollary}[Theorem]{Corollary}

\theoremstyle{definition}
\newtheorem{Remark}[Theorem]{Remark}
\newtheorem{Definition}[Theorem]{Definition}
\newtheorem{Example}[Theorem]{Example}
\newtheorem{Terminology}[Theorem]{Terminology}

\newtheorem{Warning}[Theorem]{Warning}

\theoremstyle{remark}

\DeclareMathOperator{\Der}{Der}

\DeclareMathOperator{\Hom}{Hom}
\DeclareMathOperator{\End}{End}
\DeclareMathOperator{\Aut}{Aut}

\DeclareMathOperator*{\Lim}{lim}
\DeclareMathOperator*{\Colim}{colim}

\DeclareMathOperator{\Shuff}{Sh}

\newcommand{\bbR}{\mathbb{R}}

\newcommand{\bbZ}{\mathbb{Z}}
\newcommand{\bbA}{\mathbb{A}}

\newcommand{\calA}{\mathcal{A}}

\newcommand{\calC}{\mathcal{C}}

\newcommand{\calO}{\mathcal{O}}
\newcommand{\calT}{\mathcal{T}}
\newcommand{\calTcone}{\mathcal{T}^\triangleright}
\newcommand{\calR}{\mathcal{R}}
\newcommand{\calX}{\mathcal{X}}

\newcommand{\id}{\mathrm{id}}
\newcommand{\op}{\mathrm{op}}
\newcommand{\Id}{\mathrm{1}}
\newcommand{\pr}{\mathrm{pr}}

\newcommand{\Lie}{\mathcal{L}}

\newcommand{\Val}{\eta}
\newcommand{\Rmult}{\hat{m}}
\newcommand{\Radd}{\hat{+}}
\newcommand{\Runit}{\hat{1}}
\newcommand{\Rzero}{\hat{0}}

\newcommand{\Aadd}{\hat{+}}
\newcommand{\Azero}{\hat{0}}
\newcommand{\Aminus}{\hat{-}}
\newcommand{\Amod}{\hat{\kappa}}

\newcommand{\Tpart}[1]{T_{(#1)}}

\newcommand{\Swp}{\mathrm{swp}}

\newcommand{\Set}{{\mathcal{S}\mathrm{et}}}

\newcommand{\Eucl}{{\mathcal{E}\mathrm{ucl}}}

\newcommand{\Dflg}{{\mathcal{D}\mathrm{flg}}}
\newcommand{\Mod}{{\mathcal{M}\mathrm{od}}}
\newcommand{\Aff}{{\mathcal{A}\mathrm{ff}}}

\newcommand{\CAlg}{{\mathrm{C}\mathcal{A}\mathrm{lg}}}

\DeclareMathOperator{\Sym}{Sym}
\DeclareMathOperator{\Spec}{Spec}

\renewcommand{\phi}{\varphi}
\renewcommand{\epsilon}{\varepsilon}

\begin{document}

\title[Cartan calculus of cubical forms in tangent categories]{Cartan calculus of cubical forms\\ in tangent categories}

\author[C.~Blohmann]{Christian Blohmann}
\address{Max-Planck-Institut f\"ur Mathematik, Vivatsgasse 7, 53111 Bonn, Germany}
\email{blohmann@mpim-bonn.mpg.de}

\subjclass[2020]{18F40 (58A10, 58A03)}


\keywords{Tangent category, Cartan calculus, differential cubical forms}

\begin{abstract}
We construct the Cartan calculus of differential cubical forms on an object $X$ of a cartesian tangent category with a scalar multiplication by a commutative ring object $R$. We prove that the differential cubical forms are naturally equipped with the structure of a commutative differential graded algebra. Then we show that, if $R$ has no $2$-torsion, this CDGA together with the Lie algebra of vector fields and the inner derivatives constitutes a Cartan calculus, which lies between the initial Cartan calculus of algebraic K\"ahler forms and the terminal one of Lie-Rinehart forms. We give a number of examples, including affine schemes and elastic diffeological spaces, and show that we recover the established notions of differential forms in all cases.
\end{abstract}

\maketitle

\setcounter{tocdepth}{1}
\tableofcontents

\section{Introduction}

The Lie algebra of vector fields and the ring of differential forms on a smooth manifold together with the de Rham differential, the inner derivative, and the Lie derivative form the Cartan calculus, which is a fundamental tool of differential geometry. For generalized geometric structures, such as schemes, diffeological spaces, differential graded manifolds, differentiable stacks, etc., the question arises if and how a Cartan calculus can be constructed in a natural way. Tangent categories, introduced by Rosick\'y~\cite{Rosicky:1984} and further developed by Cockett and Cruttwell~\cite{CockettCruttwell:2014}, are a suitable framework to address this question.

The structure of a tangent category consists of a tangent functor together with the natural transformations that are needed to define a Lie algebra of vector fields \cite{Rosicky:1984, CockettCruttwell:2015}. For the Cartan calculus we also need a suitable notion of scalar multiplication by a ring object $R$ playing the role of the real numbers. In \cite{AintablianBlohmann:2026} such a notion was introduced, and it was proved that the tangent bundle has the structure of an abstract Lie algebroid \cite[Def.~7.1]{AintablianBlohmann:2025}. As a consequence, the Lie algebra of vector fields is a Lie-Rinehart algebra \cite{Rinehart:1963} over the ring of $R$-valued functions, so we obtain the Cartan calculus on Lie-Rinehart forms \cite[Thm.~6.13]{AintablianBlohmann:2026}. This is where the story could end. However, in important cases the Cartan calculus of Lie-Rinehart forms is not satisfactory.

The first case is that of affine schemes. The Lie algebra of vector fields on $\Spec(A)$ is the Lie algebra $\Der(A)$ of derivations of $A$. The usual notion of $1$-forms on $\Spec(A)$ is that of K\"ahler $1$-forms $\Omega^1_{A/k}$. The Lie-Rinehart $1$-forms, however, are given by the $A$-linear bidual,
\begin{equation*}
  \Omega^1_\mathrm{LR}\bigl( \Spec(A) \bigr)
  \cong
  \Hom_A\bigl( \Hom_A(\Omega^1_{A/k}, A), A \bigr)
  \,,
\end{equation*}
which is not genearlly isomorphic to $\Omega^1_{A/k}$. A sufficient condition for the Lie-Rinehart forms to be isomorphic to the K\"ahler forms is that $A$ is smooth over $k$ \cite[\S 7.7]{AintablianBlohmann:2026}.

The second case is that of elastic diffeological spaces \cite{Blohmann:2024a}, which have applications in Lagrangian Field Theory \cite{Blohmann:2024b}, infinite-dimensional Lie theory, geometric deformation theory, and other fields. On diffeological spaces there is a natural notion of de Rham forms \cite[\S 6.28]{IglesiasZemmour:2013} given by the left Kan extension of the de Rham functor on smooth manifolds. However, they are generally different from Lie-Rinehart forms (Section~\ref{sec:Elastic}).

All cases in which the Lie-Rinehart forms yield the standard notion of forms satisfy some finiteness condition. On a smooth manifold $M$, the sheaf of vector fields is locally free and finitely generated over the sheaf of smooth real-valued functions. This implies that the $C^\infty(M)$-linear maps $\calX(M) \to C^\infty(M)$ are in natural bijection with the fiberwise linear maps $TM \to \bbR$. Similarly, for an affine scheme $\Spec(A)$ over $k$, the condition for $A$ to be smooth ensures that $\Omega^1_{A/k}$ is finitely generated projective. It follows that $\Omega^1_{A/k}$ is reflexive, and hence isomorphic to the Lie-Rinehart $1$-forms. Without any such finiteness condition, the notion of Lie-Rinehart forms will not be the right one.

We can try to define de Rham $n$-forms on an object $X$ in a tangent category with scalar $R$-multiplication as antisymmetric and multilinear morphisms $\omega: TX \times_X \cdots \times_X TX \to R$ on the $n$-fold fiber product of the tangent bundle. However, it turns out that there is no intrinsic way to define a differential on such forms (Section~\ref{sec:DeRhamForms}). The solution we present here is to consider \textbf{cubical forms} instead \cite[\S I.14.2]{Kock:2006}. They are defined as antisymmetric and multilinear morphisms $\omega: T^n X \to R$ on the $n$-fold iterated tangent bundle. The differential of $\omega$ is defined as the antisymmetrization of the tangent morphism $T\omega: T^{n+1} X \to TR$ followed by the projection onto the fiber of $TR \cong R \times R$ (Definition~\ref{def:WedgeDiffInnder}).

On smooth manifolds, de Rham and cubical forms are isomorphic. In general tangent categories the cubical forms have to satisfy an additional condition, which essentially states that the derivative of $\omega$ in the fiber direction is $\omega$ itself \eqref{eq:CubDiffDef}. (In categories in which the derivative of a morphism is defined as its linear approximation, such as categories of manifolds of any kind, this extra condition is not needed, since the linear approximation of a linear map is the map itself.) We call cubical forms with this property \textbf{differential cubical forms}. They correspond to the singular forms in \cite{CruttwellLucyshynWright:2018}, albeit with a ring object as target.

The main result of this paper is Theorem~\ref{thm:CartanCube}. It states that in a cartesian tangent category with scalar $R$-multiplication (Definition~\ref{def:RScalar}), where $R$ has no $2$-torsion, the Lie algebra of vector fields and the differential cubical forms (Definition~\ref{def:CubeForms}), together with the natural wedge product, differential, and inner derivative (Definition~\ref{def:WedgeDiffInnder}), constitute a Cartan calculus (Definition~\ref{def:CartanCalc}).

\subsection{Related work}
\label{sec:RelatedWork}

During the last decade, substantial effort was invested in generalizing notions of differential geometry to tangent categories, largely by Cockett, Cruttwell, Lemay, and their collaborators: differential bundles as a replacement for vector bundles~\cite{CockettCruttwell:2018, MacAdam:2021, Ching:2024, CruttwellLemay:2023}, connections~\cite{CockettCruttwellConnections:2017}, Lie algebroids in the guise of involution algebroids~\cite{BurkeMacAdam:2019}, and flows of vector fields and ordinary differential equations~\cite{CockettCruttwellLemay:2021}. Structural results of Leung~\cite{Leung:2017} and Garner~\cite{Garner:2018} identify tangent structures with actions of a monoidal category of Weil algebras and show that every tangent category admits an embedding into a representable one. The supply of examples has been enlarged in several directions, among them algebras over an operad~\cite{Lanfranchi:2023b, IkonicoffLanfranchiLemay:2023}, affine schemes~\cite{LanfranchiLemay:2025}, coalgebras of differential categories~\cite{CockettLemayLucyshynWright:2020}, ind-categories~\cite{Vooys:2023}, and $\infty$-categories, where Bauer, Burke, and Ching~\cite{BauerBurkeChing:2021} exhibit Goodwillie calculus as a tangent structure.

The basic axioms of a tangent category do not provide an object of scalars. The tangent bundle is assumed to be a bundle of commutative monoids (or abelian groups), and the role of vector spaces is played by the differential objects of \cite[Def.~4.8]{CockettCruttwell:2014} and \cite[Def.~3.1]{CockettCruttwell:2018}, that is, by commutative monoids $E$ with a trivialization $TE \cong E \times E$ subject to compatibility conditions; by Proposition~\ref{prop:TangStabDiffObj} these are exactly the tangent-stable modules used here. MacAdam, Gallagher, and Lucyshyn-Wright~\cite{MacAdamGallagherLucyshynWright:2019} have proposed to axiomatize scalars by a differential object $R$ together with a point $\Runit: * \to R$ that is universal among linear maps. Such a scalar unit is automatically a commutative rig, and the differential objects are then precisely the Kock-Lawvere $R$-modules. The scalar $R$-multiplication of Definition~\ref{def:RScalar} is a structure of a different kind: it is not required to be universal, but it is required to act on the tangent bundle of every object, compatibly with the vertical lift $\lambda$ and the symmetric structure $\tau$. Since $R$ is a commutative ring object, $\calO_X = \Hom(X,R)$ is a commutative ring on which the vector fields act by derivations (Proposition~\ref{prop:VecActFunc}), which is a basic requirement of a Cartan calculus.

The work most closely related to ours is the paper of Cruttwell and Lucyshyn-Wright on sector forms~\cite{CruttwellLucyshynWright:2018}. The ring object $R$ of the scalar multiplication (Definition~\ref{def:TanStableRing}) is a special kind of differential object \cite[Def.~3.1]{CockettCruttwell:2018}. The defining equation~\eqref{eq:CubDiffDef} for differential cubical forms can be shown to be equivalent to one of the conditions for sector forms \cite[Def.~3.1, Def.~3.2]{CruttwellLucyshynWright:2018}. Since our forms are assumed to be antisymmetric, they correspond to the singular forms \cite[Def.~8.11]{CruttwellLucyshynWright:2018}. The main result of \cite{CruttwellLucyshynWright:2018} is that the sector forms on $X$ constitute a symmetric cosimplicial commutative monoid. The corresponding Dold-Kan differential restricts to the singular forms. For a smooth manifold and $E = \bbR$ the resulting complex of singular forms is isomorphic to the de Rham complex \cite[Thm.~9.25]{CruttwellLucyshynWright:2018}. The key feature of differential cubical forms is that they take values in a differential ring object $R$. This allows us to define the wedge product~\eqref{eq:CubeWedge} of forms, which was listed as an open problem in \cite[\S 10]{CruttwellLucyshynWright:2018}. Moreover, $R$ acts on every tangent bundle and satisfies the additional properties of a scalar multiplication (Definition~\ref{def:RScalar}), which makes the inner derivative~\eqref{eq:CubeInnDer}, and hence the Cartan calculus of Theorem~\ref{thm:CartanCube}, available.

The relation between forms on the iterated tangent bundle $T^n X$ and forms on the $n$-fold fiber product $T_n X$ is raised as an open problem in \cite[\S 10]{CruttwellLucyshynWright:2018}, where it is suggested that a positive answer should require $X$ to be locally a differential object or to carry a symmetric connection. In most of our examples, this requirement would be too strong. Section~\ref{sec:DeRhamForms} takes up this question in the presence of a scalar multiplication. The comparison morphism $\mu_n: T^n \to T_n$ pulls de Rham forms (Definition~\ref{def:DeRhamForms}) back to antisymmetric morphisms on $T^nX$ that are $R$-linear at every position, and we formulate an exactness condition on the vertical lift under which this pullback is an isomorphism in degree two.

Forms defined on the iterated tangent bundle also originate in synthetic differential geometry, where $T^n X \cong X^{D^n}$ for a microlinear object $X$, so that such forms are functions on infinitesimal $n$-cubes. Under the names of singular forms or forms on microcubes they appear in \cite[\S I.14.2]{Kock:2006}, \cite[Ch.~IV, \S 1]{MoerdijkReyes:1991}, and \cite[\S 4.1.1]{Lavendhomme:1996}. Their differential is defined, as in~\eqref{eq:dCubeDef}, by an alternating sum of permutations of $T\omega$ \cite[\S 4.2.3]{Lavendhomme:1996}, and in the well-adapted models of \cite{MoerdijkReyes:1991} they agree with the de Rham forms of a smooth manifold \cite[Ch.~IV, Prop.~3.7]{MoerdijkReyes:1991}. There the wedge product and the full Cartan calculus are available because the coefficient object is a ring of line type. Synthetic differential geometry also has a genuinely combinatorial theory of forms, based on infinitesimal simplices and infinitesimal parallelepipeda instead of iterated tangent bundles \cite[\S 3.1, \S 3.2]{Kock:2010}, \cite{Kock:2000, MoerdijkReyes:1986}; the comparison of the two theories is carried out in \cite[\S 4.7]{Kock:2010}. Whether the combinatorial theory can be set up in a tangent category appears to be unknown.

The de Rham forms of Definition~\ref{def:DeRhamForms} are the direct transcription of the classical notion, in which an $n$-form takes $n$ tangent vectors with a common base point as its arguments. Their exterior derivative is classically defined by a local formula \cite[7.8]{KolarMichorSlovak:1993}, which presupposes a linear structure on the underlying object. In a cartesian differential category every object has such a structure, and Cruttwell~\cite{Cruttwell:2013} has used this to define forms and an exterior derivative there, obtaining a cochain complex. Section~\ref{sec:DeRhamForms} explains why this approach does not extend to a tangent category.

Finally, there are the algebraic notions of forms, namely K\"ahler forms and the forms of a Lie-Rinehart algebra \cite{Rinehart:1963, Huebschmann:1998}. Both give rise to a Cartan calculus in every cartesian tangent category with scalar $R$-multiplication \cite{AintablianBlohmann:2026}, and Section~\ref{sec:Comparison} shows that these are the initial and the terminal Cartan calculus, with the differential cubical forms in between. On diffeological spaces, the accepted notion of de Rham forms is obtained by a left Kan extension from smooth manifolds \cite[\S 6.28]{IglesiasZemmour:2013}; the comparison with cubical forms on elastic diffeological spaces is carried out in Section~\ref{sec:Elastic}.

\begin{Terminology}
Here is why we call our forms ``cubical'' rather than ``singular'', as some authors do. The term ``singular'' arose in algebraic topology to distinguish $n$-chains given by \emph{embeddings} of $n$-cells ($n$-simplices or $n$-cubes) from those given by \emph{arbitrary} maps from the cells, which can have self-intersections and other types of singular behavior. The infinitesimal counterparts of singular \emph{simplicial} cochains are the de Rham forms given by maps on fiber products $T_n X \to R$, whereas infinitesimal singular \emph{cubical} cochains \cite[Ch.~II]{Massey:1980} are given by maps on powers $T^n X \to R$ \cite[\S I.14.2]{Kock:2006}.
\end{Terminology}

\subsection{Outline of the paper}

In Section~\ref{sec:AbstractTangent} we give a review of the notion of a cartesian tangent category with scalar multiplication. In Section~\ref{sec:cubical} we define differential cubical forms, their ring structure, differential, and inner derivative with respect to a vector field. Section~\ref{sec:Proof} contains the proof of Theorem~\ref{thm:CartanCube}, which is lengthy and involved. In Section~\ref{sec:Comparison} we define comparison maps from algebraic K\"ahler forms to differential cubical forms to Lie-Rinehart forms. In Section~\ref{sec:Examples} we give a number of examples and show that differential cubical forms recover the usual notion of forms in all of these cases.

\subsection{Notation and conventions}

We will use the notation and conventions of the companion paper~\cite{AintablianBlohmann:2026}. In particular, for two morphisms $f: X \to Y$ and $g: X \to Z$, the associated morphism to the product will be denoted by $(f,g): X \to Y \times Z$.

\section{Review of Cartesian tangent categories with scalar multiplication}
\label{sec:AbstractTangent}

\subsection{Bundles with algebraic structure}
\label{sec:BunAbelGp}

The algebraic structures we will consider are given by multi-sorted finite product theories, which we recall.

\begin{Definition}
\label{def:TAlg}
Let $\calT$ be a small category with finite products (including a terminal object as empty product). A \textbf{$\calT$-algebra} in a category $\calC$ is a functor $\calT \to \calC$ that preserves finite products. A \textbf{morphism of $\calT$-algebras} is a natural transformation that preserves finite products.
\end{Definition}


The \textbf{right cone} of $\calT$ will be denoted by $\calTcone$ \cite[Notation~1.2.8.4]{Lurie:2009}. The tip of the cone will be denoted by $*$, since it is the terminal object in $\calTcone$.

\begin{Definition}[Definition~2.3 in \cite{AintablianBlohmann:2026}]
\label{def:BundleOfTAlgs}
A \textbf{bundle of $\calT$-algebras} in the category $\calC$ is a functor $A: \calTcone \to \calC$ that preserves the limits of the diagrams $t \to * \leftarrow t'$ for all $t, t' \in \calT$. A morphism of bundles of $\calT$-algebras is a natural transformation that intertwines those limits.
\end{Definition}

Let $A: \calTcone \to \calC$ be a bundle of $\calT$-algebras. The object $X \coloneqq A(*) \in \calC$ is called the \textbf{base} of the bundle. If the pullback $A_x \coloneqq * \times_X A: \calT \to \calC$ over a point $x: * \to X$ exists in $\calC$, then $A_x$ is a $\calT$-algebra in $\calC$, called the \textbf{fiber over $x$}.

\begin{Example}
Let $R$ be a commutative ring object in the category $\calC$ with addition $\Radd$, zero $\Rzero$, multiplication $\Rmult$, and unit $\Runit$. Let $X \in \calC$ be an object. Let $X \times R$ denote the trivial bundle of rings over $X$. A \textbf{bundle of $R$-modules} consists of a bundle $p: A \to X$ of abelian groups, together with a bundle morphism $\kappa: R \times A \to A$ satisfying the usual conditions of a linear action of a ring. The axioms are spelled out explicitly for the case of the tangent bundle $TX \to X$ in a tangent category in \cite[Prop.~4.1]{AintablianBlohmann:2026}.

Our notion of bundles makes no assumption about local trivializations whatsoever. Therefore, it is somewhat surprising that bundles of $\bbR$-modules in the category of smooth finite-dimensional manifolds are vector bundles in the usual sense (with local trivializations) \cite[Example~2.7]{AintablianBlohmann:2025}.
\end{Example}

The functor of sections
\begin{equation}
\label{eq:SectionFunctor}
\begin{aligned}
  \Gamma: \calC^{[1]}
  &\longrightarrow \Set
  \\
  (A \xrightarrow{p} X)
  &\longmapsto
  \Gamma(X,A)
  = \{a:X \to A \ | \ p \circ a = \id_X \}
\end{aligned}
\end{equation}
is monoidal, mapping the fiber product of bundles to the cartesian product,
\begin{equation*}
  \Gamma(X, A \times_X B)
  \cong
  \Gamma(X,A) \times \Gamma(X,B)
  \,.
\end{equation*}
This implies that for every bundle of $\calT$-algebras $A: \calTcone \to \calC$ over $X$, the functor
\begin{align*}
  \Gamma(X,A): \calT &\longrightarrow \Set
  \\
  t &\longmapsto \Gamma(X, A_t)
\end{align*}
is a $\calT$-algebra.

\begin{Example}
Let $p: A \to X$ be a bundle of abelian groups in $\calC$ with structure morphisms $(+,0,i)$. Then $\Gamma(X,A)$ is an abelian group in the usual sense. The sum of two sections $a,b: X \to A$ is given by
\begin{equation*}
  a+b \coloneqq + \circ (a,b)
  \,.
\end{equation*}
The zero section $0: X \to A$ is the zero of the group and $i \circ a$ is the inverse.
\end{Example}

\begin{Example}
\label{ex:PointsOfR}
Let $R$ be a ring object in $\calC$. The set of sections of the trivial bundle of rings $X \times R \to X$ is a ring, which is naturally isomorphic via
\begin{equation}
\label{eq:SectionsTrivial}
\begin{aligned}
  \Gamma(X, X \times R) &\xrightarrow{~\cong~}
  \Hom(X, R) \\
  a &\longmapsto \pr_2 \circ a
\end{aligned}
\end{equation}
to the ring $\Hom(X,R)$ of $R$-valued functions with pointwise addition and multiplication
\begin{equation*}
  f + g \coloneqq \Radd \circ (f,g)
  \,,\quad
  fg \coloneqq \Rmult \circ (f,g)
  \,.
\end{equation*}
The zero of this ring is $X \to * \stackrel{\Rzero}{\to} R$, the multiplicative unit is $X \to * \stackrel{\Runit}{\to} R$.
\end{Example}

\begin{Example}
Let $A \to X$ be a bundle of $R$-modules with $R$-action $\kappa: R \times A \to A$. Using the isomorphism \eqref{eq:SectionsTrivial}, we see that $\Gamma(X,A)$ has the structure of a $\Hom(X,R)$-module given by
\begin{equation}
\label{eq:SectionsModuleStr}
  fa \coloneqq \kappa \circ (f,a) \,,
\end{equation}
for all $f \in \Hom(X,R)$ and $a \in \Gamma(X,A)$. Moreover, if $\phi: A \to A'$ is a morphism of bundles of $R$-modules with base map $\id_X$, then $\phi_*$ is a morphism of modules over $\Hom(X,R)$.
\end{Example}

\subsection{Symmetric structure on an endofunctor}




\begin{Definition}[Definition~2.4 in \cite{AintablianBlohmann:2026}]
\label{def:SymStruc}
Let $F: \calC \to \calC$ be a functor and $\tau: F^2 \to F^2$ a natural transformation. Let $\tau_{12} \coloneqq \tau\,F$ and $\tau_{23}\coloneqq F\,\tau$ be the two trivial extensions of $\tau$ to natural transformations $F^3 \to F^3$. We call $\tau$ a \textbf{braiding on $F$} if it satisfies the braid relations $\tau_{12} \circ \tau_{23} \circ \tau_{12} = \tau_{23} \circ \tau_{12} \circ \tau_{23}$. A braiding $\tau$ is called a \textbf{symmetric structure on $F$} if it satisfies $\tau\circ \tau = F^2$.
\end{Definition}

\begin{Remark}
A symmetric structure $\tau$ on a functor $F:\calC \to \calC$ defines an action of the symmetric group $S_n$ on $F^n$. The action of a transposition $\tau_{i,i+1}$, $1 \leq i < n$, is given by the natural isomorphism
\begin{equation*}
  F^{i-1} \tau F^{n-i-1}: F^n \longrightarrow F^n \,.
\end{equation*}
Since the symmetric group on $n$ elements is generated by adjacent transpositions, this induces a group homomorphism $S_n \to \Aut(F^n)$, where $\Aut(F^n)$ is the group of natural isomorphisms $F^n \to F^n$.
\end{Remark}

\begin{Definition}[Definition~2.5 in \cite{AintablianBlohmann:2026}]
\label{def:PreserveFibProd}
An endofunctor $F: \calC \to \calC$ \textbf{preserves the fiber products} of a bundle $p:A \to X$ if for all $k \geq 1$ the natural morphism of bundles over $FX$,
\begin{equation}
\label{eq:MorphBundles4}
  \nu_{k,X}:
  F(A \times_X^{p,p} \ldots \times_X^{p,p} A) \longrightarrow
  FA \times_{FX}^{Fp, Fp} \ldots \times_{FX}^{Fp, Fp} FA
  \,,
\end{equation}
where both sides have the same number $k$ of factors, is an isomorphism.
\end{Definition}


\subsection{Rosick\'{y}'s axioms}
\label{sec:RosickysAxioms}

In~\cite{Rosicky:1984}, Rosick\'{y} introduced the notion of abstract tangent functors, which axiomatizes the natural categorical structure of the tangent functor of manifolds that is needed to define the Lie bracket of vector fields.

\begin{Definition}[Sec.~2 in \cite{Rosicky:1984}, Def.~2.3 in \cite{CockettCruttwell:2014}]
\label{def:TangentStructure}
A \textbf{Rosick\'y tangent structure} on a category $\calC$ consists of a functor $T: \calC \to \calC$ together with natural transformations $\pi: T \to \Id$, $0: \Id \to T$, $+: T_2 \to T$, $\lambda: T \to T^2$, and $\tau: T^2 \to T^2$, such that the following axioms hold:

\textbf{(i)} The \textbf{fiber products}
\begin{equation}
\label{eq:TanFun1}
  T_k \coloneqq \underbrace{T \times_\Id T \times_\Id \ldots \times_\Id T}_{k \text{ factors}}
\end{equation}
over $T \stackrel{\pi}{\to} \Id$ exist for all $k \geq 1$, are pointwise, and preserved by $T$ (Definition~\ref{def:PreserveFibProd}).

\textbf{(ii)} $T \stackrel{\pi}{\to} \Id$ with neutral element $0$ and addition $+$ is a \textbf{bundle of abelian groups} over $\Id$ (Definition~\ref{def:BundleOfTAlgs}).

\textbf{(iii)}
$\tau: T^2 \to T^2$ is a \textbf{symmetric structure} on $T$ (Definition~\ref{def:SymStruc}) and a morphism of bundles of abelian groups, that is,
\begin{equation}
\label{eq:TanFun2}
\begin{tikzcd}[column sep=tiny]
T^2 \ar[rr, "\tau"] \ar[dr, "T \pi"'] && T^2 \ar[dl, "\pi T"]
\\
& T &
\end{tikzcd}
\qquad
\begin{tikzcd}[column sep=large]
T^2 \times_T^{T\pi, T\pi} T^2
\ar[r, "\tau \times_T \tau"]
\ar[d, "\nu_2^{-1}"']
&
T_2 T
\ar[dd, "+T"]
\\
T T_2  \ar[d, "T+"']
&
\\
T^2 \ar[r, "\tau"']
&
T^2
\end{tikzcd}
\end{equation}
commutes, where $\nu_2$ is the morphism~\eqref{eq:MorphBundles4} for $A=TX \xrightarrow{\pi_X} X$, $F=T$, and $k=2$.


\textbf{(iv)} The \textbf{vertical lift} $\lambda$ is a morphism of bundles of abelian groups, such that
\begin{equation}
\label{eq:TanFun3}
\begin{tikzcd}
T \ar[r, "\lambda"] \ar[d, "\pi"'] & T^2 \ar[d, "\pi T"]
\\
\Id \ar[r, "0"'] & T
\end{tikzcd}
\qquad\qquad
\begin{tikzcd}
T \ar[r, "\lambda"] \ar[d, "\lambda"'] &
T^2 \ar[d, "\lambda T"]
\\
T^2 \ar[r, "T \lambda"'] & T^3
\end{tikzcd}
\end{equation}
commutes, and
\begin{equation}
\label{eq:TanFun3add}
  (+T) \circ (\lambda \times_0 \lambda) = \lambda \circ +
  \,.
\end{equation}

\textbf{(v)} The diagrams
\begin{equation}
\label{eq:TanFun5}
\begin{tikzcd}[column sep=tiny]
& T \ar[dl, "\lambda"'] \ar[dr, "\lambda"] &
\\
T^2 \ar[rr, "\tau"'] & &
T^2
\end{tikzcd}
\qquad\qquad
\begin{tikzcd}
T^2 \ar[r, "T\lambda"] \ar[d, "\tau"'] &
T^3 \ar[r, "\tau T"] &
T^3 \ar[d, "T\tau"]
\\
T^2 \ar[rr, "\lambda T"'] & &
T^3
\end{tikzcd}
\end{equation}
commute.

\textbf{(vi)} The diagram
\begin{equation}
\label{eq:TanFun4}
\begin{tikzcd}
T \ar[r, "\lambda"] \ar[d, "\pi"']
\ar[dr, phantom, "\lrcorner", very near start]
& T^2 \ar[d, "{(\pi T, T\pi)}"]
\\
\Id \ar[r, "{(0, 0)}"'] & T_2
\end{tikzcd}
\end{equation}
is a pointwise pullback square.
\end{Definition}

\begin{Terminology}
\label{term:negatives}
In \cite{CockettCruttwell:2014} and subsequent work, Rosick\'{y}'s original requirement that $T \to \Id$ be a bundle of abelian groups is relaxed to a bundle of abelian monoids. The authors later refer to Rosick\'{y}'s original notion as tangent categories with negatives \cite{CockettCruttwell:2015,CockettCruttwellConnections:2017}. We will follow a more recent terminology and call these \textbf{Rosick\'y tangent categories} \cite{CruttwellLemay:2023,IkonicoffLanfranchiLemay:2023,LanfranchiLemay:2025}.
\end{Terminology}

The vertical lift can be extended by the additive bundle structure to the map
\begin{equation*}
  \lambda_2:
  T_2 \xrightarrow{~T0 \times_0 \lambda~}
  T_2 T
  \xrightarrow{~+T~}
  T^2
  \xrightarrow{~\tau~}
  T^2
  \,.
\end{equation*}
In components,
\begin{equation}
\label{eq:VertLiftExt}
\begin{split}
  (\lambda_2)_X
  &= \tau_X \circ {+_{TX}} \circ (T0_X \times_{0_X} \lambda_X)
  \\
  &= {T+_X} \circ \nu_{2,X}^{-1} \circ (0_{TX} \times_{0_X} \lambda_X)
  \,,
\end{split}
\end{equation}
for all $X \in \calC$. It was shown in \cite[Lem.~3.10]{CockettCruttwell:2014}, assuming all other axioms of a Rosick\'y tangent structure, that Axiom~\eqref{eq:TanFun4} is satisfied if and only if
\begin{equation}
\label{diag:lambda2pullback}
\begin{tikzcd}
T_2 \ar[r, "\lambda_2"] \ar[d, "\pi \circ \pr_1"']
\ar[dr, phantom, "\lrcorner", very near start]
&
T^2 \ar[d, "{T\pi}"]
\\
\Id \ar[r, "0"'] & T
\end{tikzcd}
\end{equation}
is a pointwise pullback.

\begin{Definition}[Definition~2.7 in \cite{AintablianBlohmann:2026}]
Let $\calC$ be a category with a tangent structure. A \textbf{vector field} on $X \in \calC$ is a section of $\pi_X: TX \to X$.
\end{Definition}

The natural transformation that maps a pair of tangent vectors in the same fiber to their difference will be denoted by
\begin{equation}
\label{eq:Difference}
  -: T_2 \xrightarrow{~\id_T \times_{\Id} i~}
  T_2 \xrightarrow{~+~} T
  \,,
\end{equation}
where $i: T \to T$ is the additive inverse of the bundle of abelian groups. The bracket of two vector fields $v,w: X \to TX$ is defined as follows: The morphism
\begin{equation}
\label{eq:DeltaOrig}
  \delta(v,w)
  \coloneqq
  -_{TX} \circ
  (Tw \circ v, \tau_X \circ Tv \circ w): X \longrightarrow T^2X
\end{equation}
satisfies
\begin{equation}
\label{eq:DeltaKerOrig}
\begin{aligned}
  \pi_{TX} \circ \delta(v,w) &= w
  \\
  T\pi_X \circ \delta(v,w) &= 0_X
  \,.
\end{aligned}
\end{equation}
It follows from the universal property of the pullback that there is a unique vector field $[v,w]$ such that
\begin{equation}
\label{eq:deltaBracketRel}
  \delta(v,w) = (\lambda_2)_X \circ \bigl(w, [v,w] \bigr)
  \,.
\end{equation}
It is straightforward to check that $[v,w]$ is additive in $v$ and $w$. It was announced in \cite{Rosicky:1984} and proved in \cite{CockettCruttwell:2015} with the input of Rosick\'y that $[v,w]$ satisfies the Jacobi identity. Observe that all ingredients of the tangent structure, including the additive inverse of $T \to \Id$, are needed for the definition of the bracket of vector fields. The Lie algebra of vector fields will be denoted by $\calX(X)$.

\subsection{Cartesian tangent structures}
\label{sec:CartesianTangent}

Let $\calC$ be a category with finite products. For all $X, Y \in \calC$ we have the natural morphism
\begin{equation}
\label{eq:chiXY}
  \chi_{X,Y} \coloneqq (T\pr_1, T\pr_2): T(X \times Y)
  \longrightarrow TX \times TY
\end{equation}
For a morphism $(f,g): Z \to X \times Y$ we have
\begin{equation}
\label{eq:chifg}
   (Tf,Tg) = \chi_{X,Y} \circ T(f, g)
   \,.
\end{equation}
We recall that a functor $T: \calC \to \calC$ is said to \textbf{preserve finite products} if $\chi_{X,Y}$ has an inverse for all $X,Y \in\calC$ and if $T$ preserves the empty product $T* \cong *$.

\begin{Definition}[Definition~3.1 in \cite{AintablianBlohmann:2026}]
\label{def:CartTan}
A tangent structure is called \textbf{cartesian} if the tangent functor preserves finite products.
\end{Definition}

\begin{Remark}
That the tangent functor preserves finite products is only one of the conditions of the definition of cartesian tangent structures in \cite[Definition~2.8]{CockettCruttwell:2014}. In Proposition~3.5 of \cite{AintablianBlohmann:2026} it was proved that this implies all other requirements of the definition of \cite{CockettCruttwell:2014}, so that the two definitions are equivalent.
\end{Remark}

As was observed in \cite[Def.~2.9]{CockettCruttwell:2014}, with a cartesian tangent structure we can define the \textbf{partial tangent morphisms} of a morphism $f: X \times Y \to Z$ by
\begin{align}
  \Tpart{1} f: TX \times Y
  &\xrightarrow{~\id_{TX} \times 0_Y~}
  TX \times TY \xrightarrow{~\chi_{X,Y}^{-1}~}
  T(X \times Y) \xrightarrow{~Tf~}
  TZ
  \\
  \label{eq:PartialT2}
  \Tpart{2} f: X \times TY
  &\xrightarrow{~0_X \times \id_{TY}~}
  TX \times TY \xrightarrow{~\chi_{X,Y}^{-1}~}
  T(X \times Y)\xrightarrow{~Tf~}
  TZ
  \,,
\end{align}
where the index refers to the factor in the product and where $\chi$ is the natural transformation~\eqref{eq:chiXY}. We use positional indices in order to describe cases where $X = Y$.

The partial tangent morphisms are functorial in $X$, $Y$, and $Z$. That is, if there is a commutative diagram
\begin{equation*}
\begin{tikzcd}
X \times Y
\ar[r, "f"]
\ar[d, "\alpha \times \beta"']
&
Z
\ar[d, "\gamma"]
\\
X' \times Y'
\ar[r, "f'"']
&
Z'
\end{tikzcd}
\end{equation*}
then the diagram
\begin{equation}
\label{eq:T2functorial}
\begin{tikzcd}[column sep=large]
X \times TY
\ar[r, "\Tpart{2} f"]
\ar[d, "\alpha \times T\beta"']
&
TZ
\ar[d, "T\gamma"]
\\
X' \times TY'
\ar[r, "\Tpart{2} f'"']
&
TZ'
\end{tikzcd}
\end{equation}
commutes. Since the tangent morphism is linear, $\Tpart{2}f$ is linear in the second argument. That is, the diagram
\begin{equation}
\label{eq:T2linear}
\begin{tikzcd}[column sep=5em]
X \times T_2 Y
\ar[r, "\cong"]
\ar[d, "\id_X \times +_Y"']
&
(X \times TY) \times_{X \times Y} (X \times TY)
\ar[r, "{\Tpart{2}f \, \times_f \, \Tpart{2} f}"]
&
T_2 Z
\ar[d, "+_Z"]
\\
X \times TY
\ar[rr, "\Tpart{2} f"']
&&
TZ
\end{tikzcd}
\end{equation}
commutes. The analogous statements hold for $\Tpart{1} f$. The tangent morphism is given by the sum of the partial tangent morphisms, as stated in the following result:

\begin{Proposition}[Proposition~2.10 in \cite{CockettCruttwell:2014}]
\label{prop:TSumPartials}
The relation
\begin{equation*}
  Tf = +_Z \circ
  \bigl(\Tpart{1} f \circ
  (\id_{TX} \times \pi_Y),
  \Tpart{2} f \circ
  (\pi_X \times \id_{TY})
  \bigr) \circ
  \chi_{X,Y}
  \,,
\end{equation*}
holds for any morphism $f: X \times Y \to Z$ in a cartesian tangent category.
\end{Proposition}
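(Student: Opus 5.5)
The plan is to decompose an arbitrary tangent vector of $X \times Y$ as the sum of a ``horizontal'' and a ``vertical'' vector, and then use that $Tf$ is additive. Concretely, one first shows that the identity of $TX \times TY$ can be written as
\begin{equation*}
  \id_{TX \times TY} = +_{X \times Y}' \circ \bigl( (\id_{TX} \times 0_Y \pi_Y),\ (0_X\pi_X \times \id_{TY}) \bigr)\,,
\end{equation*}
where $+'$ denotes the addition of $TX \times TY$ over $X \times Y$, given componentwise by $+_X \times +_Y$. This holds because $0$ is the neutral element of the bundles of abelian groups $TX \to X$ and $TY \to Y$. The two morphisms being added are both bundle maps over $\pi_X \times \pi_Y$, so the pair lands in the fiber product.

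Next, transport this along $\chi_{X,Y}^{-1}$. The step needed here is that $\chi_{X,Y}$ is an isomorphism of bundles of abelian groups over $X \times Y$. In particular, it intertwines $+_{X\times Y}$ with the componentwise addition, that is, $\chi_{X,Y} \circ +_{X \times Y} = (+_X \times +_Y)\circ(\chi \times_{X\times Y}\chi)$ after the canonical identification of fiber products. This follows from naturality of $+$ applied to the projections $\pr_1, \pr_2$: we have $T\pr_i \circ +_{X\times Y} = +_{X \text{ or } Y} \circ (T\pr_i \times T\pr_i)$. It also uses the fact, from the definition of a cartesian tangent structure (via \cite[Prop.~3.5]{AintablianBlohmann:2026}), that $T_2$ commutes with products. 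Naturality of $0$ and $\pi$ with respect to the projections similarly shows $\chi \circ 0_{X\times Y} = 0_X \times 0_Y$ and $(\pi_X \times \pi_Y)\circ\chi = \pi_{X\times Y}$.

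Combining these, $\id_{T(X\times Y)} = +_{X\times Y} \circ \bigl(\chi^{-1}(\id_{TX} \times 0_Y)(\id_{TX}\times\pi_Y),\ \chi^{-1}(0_X\times \id_{TY})(\pi_X\times\id_{TY})\bigr)\circ \chi_{X,Y}$. Now compose with $Tf$ on the left. Naturality of $+$ with respect to $f$ gives $Tf \circ +_{X\times Y} = +_Z \circ (Tf \times_f Tf)$. By the definitions of $\Tpart{1} f$ and $\Tpart{2} f$, the two resulting components are exactly $\Tpart{1} f \circ (\id_{TX}\times\pi_Y)$ and $\Tpart{2} f\circ(\pi_X\times\id_{TY})$, which yields the claimed formula.

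The main obstacle is the bookkeeping in the second step: one has to check carefully that $\chi$ respects the additive structure, including the identification $T_2(X\times Y) \cong T_2X \times T_2Y$ of fiber products. Everything else is formal naturality. I would verify this step by post-composing with the jointly monic pair $(T\pr_1, T\pr_2)$, which is jointly monic because $\chi$ is invertible. This reduces the check to naturality of $+$, $0$, and $\pi$ on each projection separately.
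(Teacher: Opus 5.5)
Your proof is correct and is essentially the standard argument of \cite{CockettCruttwell:2014}, which the paper cites without reproving: you write $\id_{TX\times TY}$ as the sum of $\id_{TX}\times 0_Y\pi_Y$ and $0_X\pi_X\times\id_{TY}$, transport this along the additive isomorphism $\chi_{X,Y}$ over $X\times Y$, and use that $Tf$ commutes with $+$ by naturality. One small remark: the identification $T_2(X\times Y)\cong T_2X\times T_2Y$ that you list as an extra input is not needed separately, because it already follows from $\chi_{X,Y}$ being an isomorphism over $X\times Y$.
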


\subsection{Module structure}
\label{sec:ModStructure}

Let $R$ be a ring object in the category $\calC$. It gives rise to an endofunctor $\Id \times R: \calC \to \calC$, $X \mapsto X \times R$, which is equipped with the projection $\pr_1: \Id \times R \to \Id$. The ring structure of $R$ equips $\Id \times R \to \Id$ with the structure of a ring internal to endofunctors over $\Id$. An $(\Id \times R \to \Id)$-module structure on $(T \to \Id) \in \End(\calC)_{/\Id}$ will be called, for short, an \textbf{$R$-module structure} on $T \to \Id$.

\begin{Proposition}[Proposition~4.1 in \cite{AintablianBlohmann:2026}]
\label{prop:RModuleBundle}
Let $\calC$ be a tangent category; let $R$ be a unital ring object in $\calC$ with addition $\Radd$, zero $\Rzero$, multiplication $\Rmult$, and unit $\Runit$. An $R$-module structure on the bundle $\pi:T \to \Id$ is given by a natural morphism
\begin{equation*}
  \kappa_X: R \times TX
  \longrightarrow TX
  \,,
\end{equation*}
such that the following diagrams commute for all $X \in \calC$:
\begin{itemize}

\item[(i)] Morphism of bundles:
\begin{equation*}
\begin{tikzcd}[column sep={tiny}]
R \times TX \ar[rr, "\kappa_X"] \ar[dr, "\pi_X \circ \, \pr_2"'] &&
TX \ar[dl, "\pi_X"]
\\
& X &
\end{tikzcd}
\end{equation*}

\item[(ii)] Associativity:
\begin{equation*}
\begin{tikzcd}[column sep={large}]
R \times R \times TX \ar[r, "\id_R \times \kappa_X"]
\ar[d, "\Rmult \times \id_{TX}"'] &
R \times TX \ar[d, "\kappa_X"]
\\
R \times TX \ar[r, "\kappa_X"'] & TX
\end{tikzcd}
\end{equation*}

\item[(iii)] Unitality:
\begin{equation*}
\begin{tikzcd}[column sep=3em]
* \times TX \ar[r, "\Runit \times \id_{TX}"] \ar[dr, "\cong"'] &
R \times TX \ar[d, "\kappa_X"]
\\
& TX
\end{tikzcd}
\end{equation*}

\item[(iv)] Linearity in $R$:
\begin{equation*}
\begin{tikzcd}[column sep={large}, row sep=3em]
R \times R \times TX
\ar[r, "\Radd \, \times \, \id_{TX}"]
\ar[d, "{\big( \kappa_X \circ (\pr_1, \pr_3), \, \kappa_X \circ (\pr_2, \pr_3) \big)}"'] &
R \times TX
\ar[d, "\kappa_X"]
\\
TX \times_X TX
\ar[r, "+_X"']
&
TX
\end{tikzcd}
\end{equation*}

\item[(v)] Linearity in $TX$:
\begin{equation*}
\begin{tikzcd}[column sep={large}, row sep=3em]
R \times TX \times_X TX
\ar[r, "\id_R \, \times \, +_X"]
\ar[d, "{\bigl( \kappa_X \circ (\pr_1, \pr_2), \, \kappa_X \circ (\pr_1, \pr_3) \bigr)}"'] &
R \times TX
\ar[d, "\kappa_X"]
\\
TX \times_X TX
\ar[r, "+_X"']
&
TX
\end{tikzcd}
\end{equation*}
\end{itemize}
\end{Proposition}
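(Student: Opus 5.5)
This statement is essentially an unpacking of a definition: an $R$-module structure on $\pi:T \to \Id$ means that $\pi$ is a module object over the ring object $\pr_1:\Id \times R \to \Id$ in the slice category $\End(\calC)_{/\Id}$. The plan is to spell out what a module object over a ring object means in a category with finite products, and then to evaluate everything componentwise at each $X$.

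\textbf{Step 1: the relevant products.} First I identify the products in the slice category $\End(\calC)_{/\Id}$. These are fiber products over $\Id$, computed pointwise. For the ring bundle and the tangent bundle this gives
\begin{equation*}
  (\Id \times R) \times_{\Id} T \cong R \times T
  \quad\text{and}\quad
  (\Id\times R)\times_\Id (\Id\times R)\times_\Id T \cong R\times R\times T .
\end{equation*}
The products $T \times_\Id T = T_2$ exist and are pointwise by Axiom~(i) of Definition~\ref{def:TangentStructure}. The ring structure on $\Id\times R$ over $\Id$ is $\id \times \Rmult$, $\id\times\Radd$, and so on. The abelian group structure on $T\to\Id$ is $(+,0,i)$ by Axiom~(ii).

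\textbf{Step 2: the action and its axioms.} A module action is a morphism over $\Id$ from $(\Id\times R)\times_\Id T$ to $T$. Under the identification in Step~1, this is exactly a natural transformation $\kappa_X: R\times TX \to TX$ with $\pi_X\circ\kappa_X = \pi_X\circ\pr_2$, which is condition~(i).

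The standard module axioms are associativity, unitality, distributivity over addition in the ring, and distributivity over addition in the group. I translate each of these diagrams through the isomorphisms of Step~1, and they become (ii)--(v) verbatim. For (iv) and (v), the target $T_2X$ is reached via the pairings displayed, using the universal property of the fiber product.

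\textbf{Step 3: the remaining axioms.} The remaining usual axioms (compatibility with zero and with negatives) follow from (iv) and (v), because the additive structures are groups. This gives the converse direction: data satisfying (i)--(v) is a module structure.

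\textbf{Main obstacle.} The only delicate point is bookkeeping. I need to check that the canonical isomorphisms between iterated fiber products over $\Id$ and the plain products $R\times R\times TX$ are natural. I also need to check that they are compatible with the reindexing maps $\pr_1,\pr_3$, so that the slice-category diagrams really become the stated ones. I would handle this by comparing both sides after composing with the projections to $R$ and to $TX$, which are jointly monic.
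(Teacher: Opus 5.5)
Your proposal is correct and is the intended argument. The paper quotes this result from the companion paper without reproducing a proof, and the statement is exactly the unpacking of the definition of a module over the ring object $\Id \times R \to \Id$ in $\End(\calC)_{/\Id}$: the pointwise identification $(X \times R) \times_X TX \cong R \times TX$ turns the structure map into condition~(i) and the four module axioms into (ii)--(v), just as you describe.
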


\begin{Remark}
As is the case for any module structure, linearity in $R$, expressed by diagram~(iv), implies that the scalar multiplication by $\Rzero: * \to R$ sends $TX$ to the zero section, that is, the diagram
\begin{equation}
\label{diag:RzeroPreserve}
\begin{tikzcd}[column sep=4em,row sep=2em]
* \times TX \ar[r, "{\Rzero \times \id_{TX}}"] \ar[d, "{\cong}"'] & R \times TX \ar[dd, "{\kappa_{X}}"] \\
TX \ar[d, "\pi_X"'] & \\
X \ar[r, "0_X"'] & TX
\end{tikzcd}
\end{equation}
is commutative. Similarly, linearity in $TX$, expressed by diagram~(v), implies that the scalar multiplication sends the zero section to the zero section, that is, the diagram
\begin{equation}
\label{diag:RzeroPreserve2}
\begin{tikzcd}[column sep=4em,row sep=2em]
R \times X \ar[r, "{\id_R \times 0_{X}}"] \ar[d, "{\pr_2}"'] & R \times TX \ar[d, "{\kappa_{X}}"]
\\
X \ar[r, "0_X"'] & TX
\end{tikzcd}
\end{equation}
commutes.
\end{Remark}

A tangent category with an $R$-module structure on the tangent bundle $T \to \Id$ will be called, for short, a tangent category with an $R$-module structure. As was observed in \cite[Proposition~4.2]{AintablianBlohmann:2026}, every tangent category in the sense of \cite{CockettCruttwell:2014} with an $R$-module structure is a Rosick\'y tangent category. If we want to work in tangent categories without negatives, the ring $R$ has to be replaced by a rig.


\begin{Proposition}[Proposition~4.3 in \cite{AintablianBlohmann:2026}]
\label{prop:RCartTan}
In a cartesian tangent category with an $R$-module structure $\kappa_X: R \times TX \to TX$, the diagram
\begin{equation*}
\begin{tikzcd}[column sep=4em, row sep=3em]
R \times T(X \times Y)
\ar[r, "\id_R \times \chi_{X,Y}"]
\ar[d, "\kappa_{X \times Y}"']
&
R \times TX \times TY
\ar[d, "{\big( \kappa_X \circ (\pr_1, \pr_2), \, \kappa_Y \circ (\pr_1, \pr_3) \big)}"]
\\
T(X \times Y) \ar[r, "\chi_{X,Y}"']
&
TX \times TY
\end{tikzcd}
\end{equation*}
commutes for all $X, Y \in \calC$.
\end{Proposition}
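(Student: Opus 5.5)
The plan is to compare the two sides of the diagram componentwise, using that $TX \times TY$ is a product. It suffices to show that for $i=1,2$ the composites $\pr_i \circ \chi_{X,Y} \circ \kappa_{X\times Y}$ and $\pr_i \circ (\kappa_X \circ (\pr_1,\pr_2), \kappa_Y\circ(\pr_1,\pr_3)) \circ (\id_R \times \chi_{X,Y})$ agree. By the definition \eqref{eq:chiXY} of $\chi_{X,Y}$, the first composite equals $T\pr_1 \circ \kappa_{X\times Y}$ for $i=1$. The second composite equals $\kappa_X \circ (\id_R \times T\pr_1)$. The case $i=2$ is symmetric, with $\pr_2$ in place of $\pr_1$.

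The key step is the naturality of $\kappa$. Since $\kappa_X: R\times TX \to TX$ is a natural transformation from the functor $R \times T(-)$ to $T$, applying naturality to the morphism $\pr_1: X \times Y \to X$ gives
\begin{equation*}
  T\pr_1 \circ \kappa_{X\times Y} = \kappa_X \circ (\id_R \times T\pr_1)\,.
\end{equation*}
This is exactly the required equality of the first components. The same naturality square for $\pr_2: X\times Y \to Y$ gives the second components. Since both components agree, the universal property of the product $TX \times TY$ yields commutativity of the diagram.

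There is no real obstacle here. The only point to check is bookkeeping: one must confirm that the right-hand vertical arrow, composed with $\id_R \times \chi_{X,Y}$, indeed yields $\kappa_X \circ (\pr_1, T\pr_1\circ \pr_2)$ on $R\times T(X\times Y)$, i.e.\ $\kappa_X\circ(\id_R\times T\pr_1)$. This follows directly from $\pr_2 \circ \chi_{X,Y} \ldots$ componentwise evaluation of $(\id_R \times \chi_{X,Y})$ followed by $(\pr_1,\pr_2)$ on $R\times TX\times TY$. Invertibility of $\chi_{X,Y}$ (cartesianness) is not even needed for this statement; it only matters if one wants to rewrite $\kappa_{X\times Y}$ as $\chi_{X,Y}^{-1}\circ(\kappa_X\times\kappa_Y)\circ(\ldots)$ afterwards.
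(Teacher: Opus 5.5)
Your proof is correct and complete. Comparing the two components in $TX \times TY$, each side reduces to the naturality square $T\pr_1 \circ \kappa_{X\times Y} = \kappa_X \circ (\id_R \times T\pr_1)$ of $\kappa$ for the projection $\pr_1\colon X \times Y \to X$, and the analogous square for $\pr_2\colon X\times Y \to Y$. This is the standard argument; the paper itself gives no proof and only cites the companion paper.

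You are also right that invertibility of $\chi_{X,Y}$ is never used. The one sloppy spot is the phrase ``$\pr_2 \circ \chi_{X,Y} \ldots$'' in your bookkeeping paragraph. It should be replaced by the identity $(\pr_1,\pr_2)\circ(\id_R\times\chi_{X,Y}) = \id_R \times T\pr_1$, which follows from $\pr_1 \circ \chi_{X,Y} = T\pr_1$.
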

In other words, Proposition~\ref{prop:RCartTan} states that $\chi_{X,Y}: T(X \times Y) \to TX \times TY$ is an isomorphism of bundles of $R$-modules over $X \times Y$ if we equip $TX \times TY$ with the diagonal $R$-module structure.

\begin{Warning}
In the category of sets, an abelian group is the same thing as a $\bbZ$-module. For an abelian group internal to a category $\calC$, this is only true if $\bbZ$ is naturally a ring object in $\calC$, which is not always the case. For example, a category with finite biproducts contains no other ring object than the zero ring. It follows that the only possible tangent structure with an $R$-module structure on such a category is the trivial tangent structure \cite[Section~8.10.3]{AintablianBlohmann:2026}.
\end{Warning}

\subsection{Tangent-stable modules}

From now on, we will assume that $\calC$ is a cartesian tangent category with an $R$-module structure. A \textbf{trivialization} of the tangent bundle of an object $X$ in $\calC$ is an isomorphism
\begin{equation*}
  TX \cong X \times A
\end{equation*}
of bundles of $R$-modules over $X$, where $A$ is an $R$-module in $\calC$. By taking the fiber at a point $x: * \to X$, we obtain an isomorphism of $R$-modules $T_x X \cong A$.

\begin{Definition}[Definition~4.7 in \cite{AintablianBlohmann:2026}]
\label{def:ModTangentStable}
An $R$-module $A$ in $\calC$ will be called \textbf{tangent-stable} if there is an isomorphism of $R$-modules
\begin{equation*}
  T_{\hat{0}} A \cong A
  \,,
\end{equation*}
where $\hat{0}: * \to A$ is the zero of the module.
\end{Definition}

\begin{Proposition}[Proposition~4.8 in \cite{AintablianBlohmann:2026}]
An $R$-module $A$ in $\calC$ is tangent-stable if and only if its tangent bundle has a trivialization
\begin{equation*}
  TA \cong A \times A
  \,.
\end{equation*}
\end{Proposition}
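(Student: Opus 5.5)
Both directions come from comparing two descriptions of a bundle of $R$-modules over $A$: the tangent bundle $\pi_A: TA \to A$, and the trivial bundle $\pr_1: A \times A \to A$, where the second factor carries the $R$-module structure of $A$.

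\emph{From a trivialization to tangent-stability.} Suppose $\phi: TA \to A \times A$ is an isomorphism of bundles of $R$-modules over $A$. Pull back along the zero $\hat{0}: * \to A$. Since $\phi$ commutes with the projections to $A$, it induces an isomorphism between the fibers over $\hat{0}$,
\begin{equation*}
  T_{\hat{0}} A = * \times_A TA \xrightarrow{~\cong~} * \times_A (A \times A) \cong A
  \,.
\end{equation*}
The fiber $T_{\hat{0}}A$ exists because the source of $\phi$ is isomorphic to $A \times A$, whose fiber over any point is simply $A$. The $R$-module structures on the two fibers come from $\kappa_A$ and from the diagonal-type action on $A\times A$, restricted to the fiber over $\hat{0}$. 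Since $\phi$ intertwines these actions, the induced map is $R$-linear. This is the same observation as the one made just before Definition~\ref{def:ModTangentStable}, applied at the point $x=\hat{0}$.

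\emph{From tangent-stability to a trivialization.} This is the harder direction. Here we are given $\psi: T_{\hat{0}}A \cong A$, and we must spread this single fiber over all of $A$ using the group structure of $A$. The plan is to use translations. Write $\Aadd: A \times A \to A$ for the addition. Then
\begin{equation*}
  \Tpart{2}\Aadd: A \times TA \longrightarrow TA
\end{equation*}
sends a point $a$ and a tangent vector $v$ to $v$ translated by $a$. Restricting to $A \times T_{\hat{0}}A$ gives a map $\Theta: A \times T_{\hat{0}} A \to TA$.
\begin{enumerate}[label=(\alph*)]
\item \emph{$\Theta$ is a bundle map over $A$.} By functoriality of partial tangents \eqref{eq:T2functorial}, $\pi_A \circ \Tpart{2}\Aadd = \Aadd \circ (\id_A \times \pi_A)$. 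On $A \times T_{\hat{0}}A$ this reduces to $\pr_1$, because $\Aadd(a, \hat{0}) = a$.
\item \emph{$\Theta$ is additive.} This follows from the fiberwise linearity \eqref{eq:T2linear} of $\Tpart{2}$.
\item \emph{$\Theta$ is $R$-linear.} This uses naturality of $\kappa$ with respect to $\Aadd$, combined with Proposition~\ref{prop:RCartTan}, which identifies the action $\kappa_{A\times A}$ with the diagonal action under $\chi$.
\item \emph{$\Theta$ is invertible.} The inverse is built the same way from translation by the negative, $\Tpart{2}\Aadd \circ (\Aminus \circ \pi_A, \id_{TA})$. By item (a) this lands in the fiber over $\hat{0}$, so it factors through $T_{\hat{0}}A$. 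The two composites are identities by associativity of $\Aadd$ and the inverse law, using functoriality of $\Tpart{2}$ once more.
\end{enumerate}
Composing $\Theta^{-1}$ with $\id_A \times \psi$ then gives the desired trivialization $TA \cong A \times A$.

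\emph{Main obstacle.} The delicate point is proving that the inverse map in step (d) really is inverse to $\Theta$. This requires checking that $T$ applied to the group axioms of $A$ interacts correctly with the zero sections $0_A$ that appear inside the definition of $\Tpart{2}$. I would verify these identities by expanding $\Tpart{2}$ through $\chi^{-1}$ and applying \eqref{eq:chifg}.
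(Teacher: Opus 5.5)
Your proof is correct, and it is the natural argument that the cited source almost certainly uses (that proof is not reproduced in this paper): restrict the trivialization to the fiber over $\hat 0$ for one direction, and for the other trivialize $TA \cong A \times T_{\hat 0}A$ by translation $\Tpart{2}\Aadd$ in the abelian group object $A$, with translation by the negative as inverse. Only small repairs are needed. The $R$-action on the trivial bundle $A\times A\to A$ acts on the second factor only, since a diagonal action would not be a bundle map over $A$, although the two agree on the fiber over $\hat 0$. The identity $\pi_A\circ\Tpart{2}\Aadd=\Aadd\circ(\id_A\times\pi_A)$ in (a) comes from naturality of $\pi$, not from \eqref{eq:T2functorial}. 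In (c) you also need that $\kappa_A$ fixes the zero section, \eqref{diag:RzeroPreserve2}, so that $(0_A(a), rv)$ is the diagonal rescaling of $(0_A(a), v)$ to which Proposition~\ref{prop:RCartTan} applies.
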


\begin{Definition}
Let $A$ be a tangent-stable $R$-module. Then
\begin{equation*}
  \Val_A: TA \longrightarrow A
\end{equation*}
will denote the projection onto the fiber of $TA \cong A \times A$.
\end{Definition}

\begin{Proposition}[Proposition~4.9 in \cite{AintablianBlohmann:2026}]
\label{prop:TangStabDiffObj}
If an $R$-module $A$ is tangent-stable, then the diagrams
\begin{equation}
\label{eq:DiffObj1}
\begin{tikzcd}[column sep=large]
TA \times_A TA
\ar[r, "+_A"]
\ar[d, "{(\pr_1, \pr_2)}"']
&
TA
\ar[dd, "\Val_A"]
\\
TA \times TA
\ar[d, "\Val_A \times \Val_A"']
&
\\
A \times A
\ar[r, "\Aadd"']
&
A
\end{tikzcd}
\qquad\qquad
\begin{tikzcd}
A
\ar[r, "0_A"]
\ar[d]
&
TA
\ar[d, "\Val_A"]
\\
*
\ar[r, "\Azero"']
&
A
\end{tikzcd}
\end{equation}
\begin{equation}
\label{eq:DiffObj2}
\begin{tikzcd}[column sep=large]
T(A \times A)
\ar[r, "T\Aadd"]
\ar[d, "\chi_{A,A}"']
&
TA
\ar[dd, "\Val_A"]
\\
TA \times TA
\ar[d, "\Val_A \times \Val_A"']
\\
A \times A
\ar[r, "\Aadd"']
&
A
\end{tikzcd}
\qquad\qquad
\begin{tikzcd}
T*
\ar[r, "T\Azero"]
\ar[d]
&
TA
\ar[d, "\Val_A"]
\\
*
\ar[r, "\Azero"']
&
A
\end{tikzcd}
\end{equation}
\begin{equation}
\label{eq:DiffObj3}
\begin{tikzcd}[column sep=large]
TA
\ar[r, "\lambda_A"]
\ar[d, "\Val_A"']
&
T^2 A
\ar[d, "T\Val_A"]
\\
A
&
TA
\ar[l, "\Val_A"]
\end{tikzcd}
\end{equation}
commute.
\end{Proposition}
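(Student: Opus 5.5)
The plan is to fix a trivialization $\phi: TA \xrightarrow{\cong} A \times A$ of bundles of $R$-modules over $A$, as provided by Proposition~4.8 of \cite{AintablianBlohmann:2026}, with $\Val_A = \pr_2 \circ \phi$. I would choose $\phi$ by transporting the given isomorphism $T_{\Azero}A \cong A$ along the addition of $A$, which is the construction used in the proof of that proposition. The point is that $\phi$ is then natural with respect to the module structure, and in particular $\phi \circ T\Aadd$ is computable.

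\textbf{Diagrams \eqref{eq:DiffObj1}.} These only use that $\phi$ is an isomorphism of bundles of $R$-modules over $A$. Fiberwise addition $+_A$ on $TA$ corresponds under $\phi$ to addition in the second factor of $A \times A$ over a fixed base point, so $\Val_A \circ +_A = \Aadd \circ (\Val_A \times \Val_A) \circ (\pr_1,\pr_2)$. Likewise the zero section $0_A$ corresponds to $(\id_A, \Azero)$, which gives $\Val_A \circ 0_A = \Azero$.

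\textbf{Diagrams \eqref{eq:DiffObj2}.} For the first one I would use Proposition~\ref{prop:TSumPartials}:
\begin{equation*}
  T\Aadd = +_A \circ \bigl(\Tpart{1}\Aadd \circ (\id \times \pi), \Tpart{2}\Aadd \circ (\pi \times \id)\bigr) \circ \chi_{A,A}.
\end{equation*}
Composing with $\Val_A$ and using \eqref{eq:DiffObj1} reduces the claim to $\Val_A \circ \Tpart{2}\Aadd = \Val_A \circ \pr_2$ on $A \times TA$, and symmetrically for $\Tpart{1}$. Now $\Tpart{2}\Aadd(a,-)$ is the tangent map of translation by $a$, and it carries $T_bA$ to $T_{a+b}A$. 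With $\phi$ defined by translation from the fiber over $\Azero$, this identity holds by construction; the justification is the functoriality \eqref{eq:T2functorial} together with associativity of $\Aadd$. The second diagram follows from $T* \cong *$: the map $T\Azero$ is the zero vector at $\Azero$, so we conclude by \eqref{eq:DiffObj1}.

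\textbf{Diagram \eqref{eq:DiffObj3}.} This is the step I expect to be the main obstacle, since it needs the vertical lift. I would restrict to the fiber over $\Azero$ and use \eqref{eq:TanFun4}: $\lambda$ identifies $T_{\Azero}A$ with the part of $T^2A$ lying over $(0,0)$. Applying $T$ to the isomorphism $T_{\Azero}A \cong A$, and using that $T$ preserves the relevant pullbacks, shows that $T\Val_A \circ \lambda_A$ is the vertical vector $\lambda$ at the point $\Azero$ in direction $\Val_A(v)$.

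Concretely, the naturality of $\lambda$ applied to $\Val_A$ gives
\begin{equation*}
  T\Val_A \circ \lambda_A = \lambda_A \circ \Val_A \circ \ldots,
\end{equation*}
so the claim reduces to $\Val_A \circ \lambda_A = \id_A$ on the fiber over $\Azero$. That identity expresses that the chosen isomorphism $T_{\Azero}A \cong A$ is compatible with the canonical identification of the fiber of a vector bundle with its vertical tangent space. If the given isomorphism does not satisfy this, I would first modify the trivialization by an $R$-module automorphism of $A$ so that it does. I expect the choice of $\phi$ compatible simultaneously with translation and with $\lambda$ to be where the real work lies.
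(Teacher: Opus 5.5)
Your arguments for \eqref{eq:DiffObj1} and \eqref{eq:DiffObj2} are fine. \eqref{eq:DiffObj1} only restates that $TA\cong A\times A$ is an isomorphism of bundles of abelian groups over $A$. For your translation-invariant trivialization, Proposition~\ref{prop:TSumPartials} reduces the first diagram of \eqref{eq:DiffObj2} to $\Val_A\circ\Tpart{2}\Aadd=\Val_A\circ\pr_2$ and its mirror image, which follow from functoriality of $T$ and associativity of $\Aadd$. The gap is \eqref{eq:DiffObj3}, which you do not actually prove. Your assertion that $T\Val_A\circ\lambda_A(v)$ is the tangent vector at $\Azero$ with fiber coordinate $\Val_A(v)$ is equivalent to \eqref{eq:DiffObj3}, so it cannot serve as input. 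Naturality of $\lambda$ with respect to $\Val_A\colon TA\to A$ reads $T^2\Val_A\circ\lambda_{TA}=\lambda_A\circ T\Val_A$. This says nothing about $T\Val_A\circ\lambda_A$, which is the derivative of $\Val_A$ in the vertical direction along the zero section. The reduced identity ``$\Val_A\circ\lambda_A=\id_A$'' does not typecheck, since $\lambda_A$ takes values in $T^2A$.

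The choice of trivialization you defer to the end is the whole content of \eqref{eq:DiffObj3}, because the diagram fails for a general isomorphism $\psi\colon T_{\Azero}A\cong A$. Take smooth manifolds with $A=R=\bbR$ and $\psi(v)=2v$. The translation-invariant trivialization gives $\Val_A(a,v)=2v$ and $T\Val_A\circ\lambda_A(a,v)=(0,2v)$. Hence $\Val_A\circ T\Val_A\circ\lambda_A(a,v)=4v\neq\Val_A(a,v)$, while \eqref{eq:DiffObj1} and \eqref{eq:DiffObj2} still hold. So you must specify which trivialization is meant and prove that it exists.

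Your renormalization idea gets partway:
\begin{itemize}
\item $\theta\coloneqq\Val_A\circ T\Val_A\circ\lambda_A$ is translation invariant, by naturality of $\lambda$ for $\Aadd$, $\lambda_A\circ 0_A=0_{TA}\circ 0_A$, and \eqref{eq:DiffObj2}.
\item Hence $\theta=\beta\circ\Val_A$ for some $\beta\colon A\to A$, and $\beta$ is invertible by \eqref{eq:TanFun4}.
\item Replacing $\Val_A$ by $\gamma\circ\Val_A$ turns the defect into $\gamma\circ\gamma^\sharp\circ\beta\circ\gamma^{-1}$, where $\Val_A\circ T\gamma=\gamma^\sharp\circ\Val_A$.
\end{itemize}
To choose $\gamma$ you must therefore control $\Val_A\circ T\gamma$ for linear $\gamma$, i.e.\ know that the tangent map of a linear map is the map itself. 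For $\gamma=\beta^{-1}$ to be $R$-linear you also need $\lambda$ to commute with $\kappa$. Neither is an axiom of a cartesian tangent category with a mere $R$-module structure, and you argue for neither. Compare the remarks after \eqref{eq:CubDiffDef}, and Proposition~\ref{prop:RmodLinearization}, which uses the scalar multiplication axioms. Without such an input, or a normalization built into the definition of $\Val_A$, \eqref{eq:DiffObj3} remains unproved.
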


\begin{Remark}
In \cite[Def.~3.1]{CockettCruttwell:2018}, a differential object in a tangent category was defined to be a commutative monoid $A$ such that there is an isomorphism $TA \cong A \times A$ and such that the diagrams~\eqref{eq:DiffObj1}, \eqref{eq:DiffObj2}, and \eqref{eq:DiffObj3} commute. In the earlier paper \cite[Def.~4.8]{CockettCruttwell:2014} of the same authors, the second diagram of \eqref{eq:DiffObj2} was shown to be redundant \cite[Lem.~4.9]{CockettCruttwell:2014}. Proposition~\ref{prop:TangStabDiffObj} shows that an abelian group object is a differential object if and only if it is tangent-stable.
\end{Remark}

\begin{Proposition}[Proposition~4.10 in \cite{AintablianBlohmann:2026}]
\label{prop:frakgTanStable}
The tangent fibers of a group object are tangent-stable.
\end{Proposition}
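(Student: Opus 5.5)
The plan is to compute $T_{\hat 0}(T_eG)$ directly and identify it with $T_eG$ via the vertical lift. The argument combines the left-translation trivialization of $TG$ with Axiom~\eqref{eq:TanFun4}.

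Let $G$ be a group object in $\calC$ with multiplication $m$, unit $e: * \to G$ and inverse $\iota$. Write $A \coloneqq T_eG = * \times_G TG$ for the tangent fiber at $e$. It is an $R$-module because it is a fiber of the bundle of $R$-modules $TG \to G$.

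\textbf{Step 1: trivialize $TG$ by left translation.} Define
\begin{equation*}
  \phi: G \times A \longrightarrow TG\,,\qquad
  \phi \coloneqq \Tpart{2} m \circ (\id_G \times j)\,,
\end{equation*}
where $j: A \to TG$ is the inclusion of the fiber. A candidate inverse is
\begin{equation*}
  v \longmapsto \bigl(\pi_G v,\; \Tpart{2} m(\iota\,\pi_G v, v)\bigr)\,.
\end{equation*}
Its second component lands in the fiber over $e$ because $m(\iota g, g) = e$. That the two maps are mutually inverse follows from associativity and unitality of $m$, using the functoriality~\eqref{eq:T2functorial} of partial tangent morphisms. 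Linearity~\eqref{eq:T2linear} together with Proposition~\ref{prop:RCartTan} shows that $\phi$ is an isomorphism of bundles of $R$-modules over $G$. Under $\phi$, the pullback square defining $A$ becomes the product square $A \cong * \times_G (G \times A)$.

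\textbf{Step 2: show that $T$ preserves this pullback.} Since $T$ is cartesian (Definition~\ref{def:CartTan}) and $T* \cong *$, Step~1 gives
\begin{equation*}
  TA \cong T* \times_{TG} T^2G \cong * \times_{TG} T^2 G\,,
\end{equation*}
where the fiber is taken along $T\pi_G$ over $T e = 0_e$. The zero section of $TA$ corresponds to $T j\circ 0_A = 0_{TG}\circ j$. Hence taking the fiber of $\pi_A$ over $\hat 0 = 0_e$ yields
\begin{equation*}
  T_{\hat 0}A \cong \{\, \xi \in T^2G \mid \pi_{TG}\,\xi = 0_e,\ T\pi_G\,\xi = 0_e \,\}\,,
\end{equation*}
i.e.\ the fiber of $(\pi T, T\pi): T^2G \to T_2G$ over $(0,0)\circ e$.

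\textbf{Step 3: apply Axiom~\eqref{eq:TanFun4}.} The square~\eqref{eq:TanFun4} is a pointwise pullback, and pulling it back further along $e: * \to G$ shows that $\lambda_G$ restricts to an isomorphism $A = T_eG \xrightarrow{\cong} T_{\hat 0}A$.

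\textbf{Step 4: $R$-linearity.} It remains to check that this isomorphism respects the $R$-module structures. Additivity follows from $\lambda$ being a morphism of bundles of abelian groups and from~\eqref{eq:TanFun3add}. Compatibility with $\kappa$ requires that $\lambda$ intertwine the $R$-action on $T$ with the action on the relevant fiber of $T^2$. This is precisely the kind of compatibility of $\kappa$ with $\lambda$ imposed on a scalar multiplication. I would invoke it here (if needed, citing the corresponding axiom of \cite{AintablianBlohmann:2026}).

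The main obstacle is Step~2. We must make precise that $T$ preserves the specific fiber $* \times_G TG$, and that the zero of $A$ and the projection $\pi_A$ correspond to the maps claimed. Tangent functors only preserve fiber powers of $\pi$ in general. This is exactly why Step~1 is needed: it converts the fiber into a product, which a cartesian $T$ preserves.
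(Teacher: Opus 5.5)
The paper gives no proof of this statement; it is quoted from \cite{AintablianBlohmann:2026}, so I can only assess your argument on its own terms.

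Steps~1--3 are correct, and they contain the real idea. A general tangent fiber $*\times_X TX$ need not be preserved by $T$. For a group object, however, the left trivialization turns it into a product square, which a cartesian $T$ preserves. Pasting with the pullback \eqref{eq:TanFun4} then exhibits $\lambda_G\circ j$ as the fiber inclusion of $T_{\hat 0}A$ into $T^2G$. The points you leave implicit are routine:
\begin{itemize}
\item $j$ and $Tj$ are monic, being pullbacks of the split monomorphisms $e$ and $Te$;
\item $j\circ\pi_A=\pi_{TG}\circ Tj$;
\item this identification also shows that the pullback $T_{\hat 0}A$ exists.
\end{itemize}
Fibers at other points $g$ are $R$-isomorphic to $A$ via $\Tpart{2}m(g,-)$ by your Step~1, so they are covered as well. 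Additivity in Step~4 is exactly \eqref{eq:TanFun3add}.

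\textbf{The gap is the scalar part of Step~4.} In Section~\ref{sec:AbstractTangent} this proposition is stated when only an $R$-module structure on $T$ is assumed. The compatibility \eqref{diag:ScalarMult1} of $\kappa$ with $\lambda$ is an axiom of a scalar multiplication (Definition~\ref{def:RScalar}), introduced only afterwards in Section~\ref{sec:ScMult}. It is not supplied by naturality of $\kappa$: naturality only controls tangent maps $Tf$, and $\lambda_X$ is not of that form.

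Without \eqref{diag:ScalarMult1} your argument yields less. The element $\kappa_{TX}(r,\lambda_X v)$ still lies in the pullback \eqref{eq:TanFun4}; this follows from
\begin{itemize}
\item Proposition~\ref{prop:RModuleBundle}(i),
\item naturality of $\kappa$ along $\pi_X$,
\item $T\pi\circ\lambda=0\circ\pi$,
\item and \eqref{diag:RzeroPreserve2}.
\end{itemize}
Hence $\kappa_{TX}\circ(\id_R\times\lambda_X)=\lambda_X\circ\theta_X$ for a unique natural $R$-module structure $\theta$ on $T$. Consequently $\lambda_G$ is an $R$-linear isomorphism from $(A,\theta)$ onto $T_{\hat 0}A$. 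The equality $\theta=\kappa$ is precisely \eqref{diag:ScalarMult1}, and nothing in Steps~1--3 provides it.

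So, as written, you prove the proposition for a scalar $R$-multiplication, or under the single extra hypothesis \eqref{diag:ScalarMult1}. For a bare $R$-module structure you must either state that hypothesis explicitly or find a different argument for $R$-equivariance.
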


\begin{Lemma}[Lemma~4.11 in \cite{AintablianBlohmann:2026}]
For a tangent-stable module $A$, the following relations hold:
\begin{align}
\label{eq:TEtaEtaTtau}
  T\Val_{A}
  &= \Val_{TA} \circ \tau_A
  \\
\label{eq:EtaTEtalambda}
  \Val_A
  &=
  \Val_A \circ T\Val_{A} \circ \lambda_{A}
  \\
\label{eq:EtaTEtalambda2}
  \Val_A \circ \pr_2
  &=
  \Val_A \circ T\Val_{A} \circ (\lambda_2)_A
  \\
\label{eq:EtaEtaT}
  \Val_A \circ T\Val_{A}
  &= \Val_A \circ \Val_{TA}
\\
\label{eq:EtaTEtaSymmetric01}
  \Val_A \circ T\Val_{A} \circ \tau_A
  &= \Val_A \circ T\Val_{A}
\end{align}
\end{Lemma}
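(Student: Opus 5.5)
The five relations are statements about morphisms out of $T^2A$ (or out of $TA$, $T_2A$), so I will check each one after transporting along the trivialization. Write $\phi: TA \xrightarrow{\cong} A \times A$ for the trivialization, with $\pr_1 \circ \phi = \pi_A$ and $\pr_2 \circ \phi = \Val_A$. Since $T$ is cartesian, $T^2A \cong TA \times TA \cong A^{\times 4}$ via $\chi_{A,A} \circ T\phi$ followed by $\phi \times \phi$. All structure maps $\tau_A$, $\lambda_A$, $(\lambda_2)_A$, $\Val_{TA}$ and $T\Val_A$ are then to be expressed in these coordinates. The inputs I will use are the differential-object identities of Proposition~\ref{prop:TangStabDiffObj} and the tangent axioms \eqref{eq:TanFun3}, \eqref{eq:TanFun5}, \eqref{eq:VertLiftExt}.

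\textbf{Relation \eqref{eq:EtaTEtalambda}.} This is simply \eqref{eq:DiffObj3}, read in the other direction.

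\textbf{Relation \eqref{eq:EtaTEtaSymmetric01}.} It follows from \eqref{eq:EtaTEtalambda} together with naturality and $\tau\circ\lambda=\lambda$ from \eqref{eq:TanFun5}, but only on the image of $\lambda$. Hence I will instead prove it after \eqref{eq:TEtaEtaTtau} and \eqref{eq:EtaEtaT}. Assuming those, the chain is
\[
\Val_A \circ T\Val_A \circ \tau_A
= \Val_A \circ \Val_{TA} \circ \tau_A \circ \tau_A
= \Val_A \circ \Val_{TA}
= \Val_A \circ T\Val_A,
\]
where the first step applies \eqref{eq:EtaEtaT} after substituting $\Val_{TA}\circ\tau_A$ for $T\Val_A$ via \eqref{eq:TEtaEtaTtau}, and the last step is \eqref{eq:EtaEtaT} again.

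\textbf{Relation \eqref{eq:EtaEtaT}.} Here $TA$ is itself tangent-stable, with trivialization $T(TA) \cong TA \times TA$ obtained from $T\phi$ and $\chi$. The plan is to fix $\Val_{TA}$ to be the composite
\[
T^2A \xrightarrow{T\phi} T(A\times A) \xrightarrow{\chi_{A,A}} TA\times TA \longrightarrow \ldots
\]
Then both sides of \eqref{eq:EtaEtaT} become $\Val_A \circ T\Val_A$, up to the choice of isomorphism, provided $\Val_{TA}$ is defined via $T\phi$ in this way.

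This forces the main subtlety: $\Val_{TA}$ has to be the canonical choice. Defining it through $T\phi$ and then $\lambda$ is what makes \eqref{eq:EtaEtaT} hold. With that choice fixed, \eqref{eq:TEtaEtaTtau} becomes the statement that $\tau$ swaps the two middle coordinates of $A^{\times 4}$.

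\textbf{Relation \eqref{eq:TEtaEtaTtau}.} This is the hardest step. I would handle it by computing $\tau_A$ in coordinates. Naturality of $\tau$ with respect to $\Aadd$ and $\phi$ reduces the problem to the component $\tau$ on $T^2$ of a tangent-stable object. There I would use the pullback \eqref{diag:lambda2pullback} to decompose $T^2A$ as a sum of a $\lambda_2$-part and a part coming from $T0$ and $0T$. On $\lambda$-images, \eqref{eq:TanFun5} gives $\tau\circ\lambda=\lambda$. On $T0$- and $0T$-images, \eqref{eq:TanFun2} and $\tau\circ T0 = 0T$ swap the two roles. Linearity then extends the identity to all of $T^2A$.

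\textbf{Relation \eqref{eq:EtaTEtalambda2}.} Expand $\lambda_2$ using \eqref{eq:VertLiftExt}. Then use additivity of $\Val_A\circ T\Val_A$ from \eqref{eq:DiffObj1} and \eqref{eq:DiffObj2}, the vanishing on $T0$ from the zero diagram, \eqref{eq:EtaTEtalambda} on the $\lambda$ summand, and \eqref{eq:EtaTEtaSymmetric01} to remove the $\tau$.
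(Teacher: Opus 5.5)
The paper does not prove this lemma; it quotes it from \cite{AintablianBlohmann:2026}, so there is no in-text argument to compare against. Judged on its own, your outline is correct, and you have identified the only substantive step: the coordinate form of $\tau_A$. Two points need straightening out.

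First, $\Val_{TA}$ involves no $\lambda$. With $\phi=(\pi_A,\Val_A)$, set
$\Val_{TA}\coloneqq\phi^{-1}\circ(\Val_A\times\Val_A)\circ\chi_{A,A}\circ T\phi=\phi^{-1}\circ(\Val_A\circ T\pi_A,\,\Val_A\circ T\Val_A)$, using \eqref{eq:chifg}.
\begin{itemize}
\item With this choice, \eqref{eq:EtaEtaT} is a tautology.
\item Since $T\pi_A\circ\tau_A=\pi_{TA}$ and $\pi_A\circ T\Val_A=\Val_A\circ\pi_{TA}$, the base components of $\phi\circ T\Val_A$ and $\phi\circ\Val_{TA}\circ\tau_A$ agree automatically. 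Hence \eqref{eq:TEtaEtaTtau} and \eqref{eq:EtaTEtaSymmetric01} are literally equivalent, and your detour of proving \eqref{eq:TEtaEtaTtau} first and then deducing \eqref{eq:EtaTEtaSymmetric01} is valid but redundant.
\item In your chain for \eqref{eq:EtaTEtaSymmetric01}, the first equality uses only \eqref{eq:TEtaEtaTtau}. Then $\tau_A\circ\tau_A=\id$ gives the second, and \eqref{eq:EtaEtaT} enters only at the last step.
\end{itemize}

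Second, make the decomposition explicit instead of routing it through \eqref{diag:lambda2pullback}.
\begin{itemize}
\item The isomorphism $(\phi\times\phi)\circ\chi_{A,A}\circ T\phi\colon T^2A\to A^4$ has components $x=\pi_A\circ\pi_{TA}$, $x_1=\Val_A\circ T\pi_A$, $x_2=\Val_A\circ\pi_{TA}$, and $x_{12}=\Val_A\circ T\Val_A$.
\item By \eqref{eq:DiffObj1}--\eqref{eq:DiffObj3}, naturality, and $T\pi\circ\lambda=0\circ\pi=\pi T\circ\lambda$, the morphisms $T0_A$, $0_{TA}$, $\lambda_A$ send $\phi^{-1}(a,b)$ to $(a,b,0,0)$, $(a,0,b,0)$, $(a,0,0,b)$ respectively.
\item By \eqref{eq:DiffObj2}, $T^2\Aadd$ is coordinatewise addition. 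Hence $\id_{T^2A}$ is the $T^2\Aadd$-sum of $T0_A\circ T\pi_A$, $0_{TA}\circ\phi^{-1}\circ(\Azero,x_2)$, and $\lambda_A\circ\phi^{-1}\circ(\Azero,x_{12})$, where $\Azero$ is the constant zero morphism.
\item Naturality of $\tau$ with respect to $\Aadd$ is essential here: the two fiberwise additions alone never produce a point with $x_1$ and $x_2$ both nonzero. Combined with $\tau\circ T0=0T$, $\tau\circ 0T=T0$, and $\tau\circ\lambda=\lambda$, it gives
$x\circ\tau_A=x$, $x_1\circ\tau_A=x_2$, $x_2\circ\tau_A=x_1$, and $x_{12}\circ\tau_A=x_{12}$.
\end{itemize}
The last identity is exactly \eqref{eq:EtaTEtaSymmetric01}, and by the equivalence above it also gives \eqref{eq:TEtaEtaTtau}.

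Your arguments for \eqref{eq:EtaTEtalambda} and \eqref{eq:EtaTEtalambda2} are fine as stated.
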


\begin{Lemma}[Lemma~4.12 in \cite{AintablianBlohmann:2026}]
\label{lem:AaddTadd}
Let $A$ be a tangent-stable module with addition $\Aadd$. Then
\begin{equation*}
  \Val_A \circ T\Val_A \circ +_{T\!A}
  = \Aadd \circ
  ( \Val_A \circ T\Val_A \circ \pr_1,
    \Val_A \circ T\Val_A \circ \pr_2 )
  \,,
\end{equation*}
where $\pr_1, \pr_2: T^2\! A \times_{TA} T^2\! A \to T^2\!A$ are the projections.
\end{Lemma}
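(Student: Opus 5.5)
The plan is to peel off the two maps $\Val_A$ and $T\Val_A$ one at a time. First use naturality of the tangent addition to pull $T\Val_A$ through $+_{TA}$. Then use the first diagram of \eqref{eq:DiffObj1} in Proposition~\ref{prop:TangStabDiffObj} to pull $\Val_A$ through $+_A$. No tangent-stability of $TA$ is needed for this.

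\emph{Step 1 (naturality of $+$).} Since $+: T_2 \to T$ is a natural transformation, for the morphism $\Val_A: TA \to A$ we have
\begin{equation*}
  T\Val_A \circ +_{TA}
  = +_A \circ \bigl( T\Val_A \times_{\Val_A} T\Val_A \bigr)
  \,,
\end{equation*}
as morphisms $T^2\!A \times_{TA} T^2\!A \to TA$. Here $T_2(\Val_A) = T\Val_A \times_{\Val_A} T\Val_A$ is the induced map of fiber products over $\pi$. This uses that $\pi$ is natural, so that $\pi_A \circ T\Val_A = \Val_A \circ \pi_{TA}$. The map lands in $TA \times_A TA$ because the two components have the same base point $\Val_A$ applied to the common base point in $TA$. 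In terms of the projections, $\pr_i \circ (T\Val_A \times_{\Val_A} T\Val_A) = T\Val_A \circ \pr_i$ for $i=1,2$.

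\emph{Step 2 (additivity of $\Val_A$).} Compose the identity of Step~1 with $\Val_A$. The first diagram of \eqref{eq:DiffObj1} gives $\Val_A \circ +_A = \Aadd \circ (\Val_A \circ \pr_1, \Val_A \circ \pr_2)$ on $TA \times_A TA$. Combining this with Step~1 yields
\begin{equation*}
  \Val_A \circ T\Val_A \circ +_{TA}
  = \Aadd \circ \bigl( \Val_A \circ T\Val_A \circ \pr_1,\ \Val_A \circ T\Val_A \circ \pr_2 \bigr)
  \,,
\end{equation*}
which is the claim.

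The only point that needs care is Step~1. The fiber product $T^2\!A \times_{TA} T^2\!A$ is formed over $\pi_{TA}$, not over $T\pi_A$, and this is exactly the domain of the component $+_{TA}$ of the natural transformation $+$. So naturality applies verbatim. One must still check that the induced map on fiber products is well defined, which is the naturality of $\pi$ noted above. Everything else is a direct substitution, so I expect no real obstacle.
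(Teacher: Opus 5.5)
Your proof is correct. Naturality of $+\colon T_2 \to T$ at the morphism $\Val_A\colon TA \to A$ gives $T\Val_A \circ +_{TA} = +_A \circ (T\Val_A \times_{\Val_A} T\Val_A)$ on the fiber product over $\pi_{TA}$, and the left diagram of \eqref{eq:DiffObj1} then yields the identity, without using tangent-stability of $TA$. This paper quotes the lemma from \cite{AintablianBlohmann:2026} and gives no proof of its own, so there is no argument here to compare against.
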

\begin{Corollary}[Corollary~4.13 in \cite{AintablianBlohmann:2026}]
\label{cor:AaddTminus}
Let $A$ be a tangent-stable module with subtraction $\Aminus$. Then
\begin{equation}
\label{eq:AminusTminus}
  \Val_A \circ T\Val_A \circ -_{T\!A}
  = \Aminus \circ
  ( \Val_A \circ T\Val_A \circ \pr_1,
    \Val_A \circ T\Val_A \circ \pr_2 )
  \,,
\end{equation}
where $\pr_1, \pr_2: T^2\! A \times_{TA} T^2\! A \to T^2\!A$ are the projections.
\end{Corollary}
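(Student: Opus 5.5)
The plan is to derive Corollary~\ref{cor:AaddTminus} from Lemma~\ref{lem:AaddTadd}, using that the difference map~\eqref{eq:Difference} is $-_{TA} = +_{TA} \circ (\id_{T^2A} \times_{TA} i_{TA})$. By Lemma~\ref{lem:AaddTadd},
\begin{equation*}
  \Val_A \circ T\Val_A \circ -_{TA}
  = \Aadd \circ \bigl( \Val_A \circ T\Val_A \circ \pr_1,\,
    \Val_A \circ T\Val_A \circ i_{TA} \circ \pr_2 \bigr)
  \,.
\end{equation*}
Since $\Aminus = \Aadd \circ (\id_A \times \hat{\iota})$, where $\hat{\iota}$ is the additive inverse of $A$, it then suffices to prove
\begin{equation*}
  \Val_A \circ T\Val_A \circ i_{TA} = \hat{\iota} \circ \Val_A \circ T\Val_A
  \,,
\end{equation*}
that is, that $\phi \coloneqq \Val_A \circ T\Val_A : T^2A \to A$ intertwines the inverse of the bundle $\pi_{TA}: T^2A \to TA$ with the inverse of $A$.

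For this I would argue as with any additive map between abelian group objects. Lemma~\ref{lem:AaddTadd} says that $\phi$ is a morphism from the bundle of abelian groups $T^2A \to TA$ to $A$, provided $\phi$ also sends the zero section to zero. That holds: $\phi \circ 0_{TA} = \Val_A \circ T\Val_A \circ 0_{TA} = \Val_A \circ 0_A \circ \Val_A$ by naturality of $0$, and this equals $\Azero$ by the second diagram of~\eqref{eq:DiffObj1}. Now set $x = \pr_1$ and $y = i_{TA}\circ\pr_1$, so that $+_{TA}\circ(x,y) = 0_{TA}\circ\pi_{TA}$. Applying Lemma~\ref{lem:AaddTadd} gives
\begin{equation*}
  \Aadd \circ (\phi, \phi \circ i_{TA}) = \Azero
  \,,
\end{equation*}
so $\phi\circ i_{TA}$ is the additive inverse of $\phi$ in the group $\Hom(T^2A, A)$. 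Hence $\phi\circ i_{TA} = \hat{\iota}\circ\phi$.

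The only delicate point is the bookkeeping with fiber products. The pair $(\id, i_{TA})$ must land in $T^2A\times_{TA}T^2A$ over $\pi_{TA}$, which holds because $i$ is a bundle map. Also, \eqref{eq:Difference} applied at the object $TA$ uses the bundle structure $\pi_{TA}$, which is the same structure as in Lemma~\ref{lem:AaddTadd}. Once these are checked, the argument is formal.
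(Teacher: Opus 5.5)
Your proof is correct and follows essentially the same route the paper's placement of the statement as a corollary of Lemma~\ref{lem:AaddTadd} indicates: write $-_{TA} = +_{TA}\circ(\id_{T^2A}\times_{TA} i_{TA})$, apply Lemma~\ref{lem:AaddTadd}, and show that $\Val_A\circ T\Val_A$ intertwines $i_{TA}$ with the inverse of $A$, using uniqueness of inverses in the group $\Hom(T^2A,A)$. One notational fix: the map you feed into Lemma~\ref{lem:AaddTadd} should be $(\id_{T^2A}, i_{TA})\colon T^2A\to T^2A\times_{TA}T^2A$ rather than one built from $\pr_1$; your zero-section check is correct, though it also follows from Lemma~\ref{lem:AaddTadd} itself by cancellation.
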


\subsection{The axioms of scalar multiplication}
\label{sec:ScMult}

In order to obtain a Cartan calculus on every object of a tangent category, we have to require certain compatibility conditions of the $R$-module structure with the tangent structure.

\begin{Definition}[Definition~4.14 in \cite{AintablianBlohmann:2026}]
\label{def:TanStableRing}
A commutative ring object in a cartesian tangent category will be called \textbf{tangent-stable} if it is tangent-stable (Definition~\ref{def:ModTangentStable}) as a module over itself.
\end{Definition}

\begin{Definition}[Definition~4.15 in \cite{AintablianBlohmann:2026}]
\label{def:RScalar}
Let $R$ be a commutative ring object in a cartesian tangent category $\calC$. An $R$-module structure $\kappa_X: R \times TX \to TX$ on the tangent bundle $\pi:T \to \Id$ will be called a \textbf{scalar multiplication} if $R$ is tangent-stable and if the following diagrams commute for all $X \in \calC$:
\begin{equation}
\label{diag:ScalarMult1}
\begin{tikzcd}[column sep={large}]
R \times TX
\ar[r, "\id_R \times \lambda_X"]
\ar[d, "\kappa_X"'] &
R \times T^2 X \ar[d, "\kappa_{TX}"]
\\
TX
\ar[r, "\lambda_X"']
&
T^2 X
\end{tikzcd}
\end{equation}
\begin{equation}
\label{diag:ScalarMult2}
\begin{tikzcd}[column sep=6em]
TR \times TX
\ar[r, "\Tpart{1}\kappa_X"]
\ar[d, "{(\pi_R, \Val_R) \, \times \, \id_{TX}}"']
&[4em]
T^2X
\\
R \times R \times TX
\ar[r, "{\big(\kappa_X \circ (\pr_1,\pr_3), \,
\kappa_X \circ (\pr_2,\pr_3)\big)}"']
&
T_2 X
\ar[u, "(\lambda_2)_X"']
\end{tikzcd}
\end{equation}
\begin{equation}
\label{diag:ScalarMult3}
\begin{tikzcd}[column sep=3em]
R \times T^2 X
\ar[r, "\Tpart{2}\kappa_X"]
\ar[d, "\id_R \times \tau_X"']
&
T^2 X
\\
R \times T^2 X
\ar[r, "\kappa_{TX}"']
&
T^2 X
\ar[u, "\tau_X"']
\end{tikzcd}
\end{equation}

\end{Definition}


\begin{Proposition}[Proposition~4.16 in \cite{AintablianBlohmann:2026}]
\label{prop:RmodLinearization}
Let $A$ be an $R$-module in a cartesian tangent category with scalar $R$-multiplication; let $\Amod: R \times A \to A$ denote the module structure. If $A$ is tangent-stable, then the diagram
\begin{equation*}
\begin{tikzcd}
R \times TA
\ar[r, "\Tpart{2} \Amod"]
\ar[d, "\id_R \times \Val_A"']
&
TA
\ar[d, "\Val_A"]
\\
R \times A
\ar[r, "\Amod"']
&
A
\end{tikzcd}
\end{equation*}
commutes.
\end{Proposition}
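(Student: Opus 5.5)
The plan is to reduce the statement to the fiber of $TA$ over the zero $\Azero$, and then to compute on that fiber using the module axioms together with tangent-stability of $R$.

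\emph{Step 1 (reduction to the zero fiber).} Under the trivialization $TA \cong A \times A$, every tangent vector is the sum, via $T\Aadd$ and $\chi_{A,A}$, of a zero vector $0_A(a)$ and a vector in $T_{\Azero}A$. Both sides of the claimed square are additive in this decomposition:
\begin{itemize}
\item $\Val_A$ is additive for $T\Aadd$ by \eqref{eq:DiffObj2};
\item $\Tpart{2}\Amod$ turns $T\Aadd$ into $T\Aadd$, by functoriality \eqref{eq:T2functorial} applied to the distributivity square $\Amod \circ (\id_R \times \Aadd) = \Aadd \circ (\Amod \times \Amod) \circ \Delta$;
\item $\Amod$ is additive in the second argument.
\end{itemize}
On zero vectors the identity is immediate. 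Indeed, $\Tpart{2}\Amod \circ (\id_R \times 0_A) = 0_A \circ \Amod$, and \eqref{eq:DiffObj1} gives $\Val_A \circ 0_A = \Azero$. The latter is also the value of $\Amod \circ (\id_R \times \Val_A)$ on zero vectors. So it suffices to check the identity on $R \times T_{\Azero}A$.

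\emph{Step 2 (parametrizing the zero fiber by curves).} Write $\Tpart{1}\Amod : TR \times A \to TA$. Restrict it to $T_{\Azero}R \times A \cong R \times A$ using the trivialization of $TR$, and evaluate at the tangent vector $\zeta_1$ at $\Rzero$ with $\Val_R(\zeta_1) = \Runit$. This gives a morphism $\sigma : A \to T_{\Azero}A$. Heuristically, $\sigma(v)$ is the velocity of $s \mapsto s v$ at $s = 0$.

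I claim that $\Val_A \circ \sigma = \id_A$, so that $\sigma$ is the inverse of $\Val_A$ on the zero fiber. Here I would use that the isomorphism $T_{\Azero}A \cong A$ is one of $R$-modules. I expect this to follow from the $R$-linearity of the trivialization combined with diagram~\eqref{diag:ScalarMult2} specialized to the tangent bundle of $A$ and the unitality of $\kappa$. I expect this identification to be the main obstacle, because it is where the abstract isomorphism of Definition~\ref{def:ModTangentStable} must be tied to the concrete module structure $\Amod$. If the direct route fails, I would instead check it after applying $\lambda_A$ and using \eqref{eq:DiffObj3} and \eqref{eq:EtaTEtalambda}.

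\emph{Step 3 (computation).} For $\xi = \sigma(v)$, the associativity square of $\Amod$, namely $\Amod \circ (\id_R \times \Amod) = \Amod \circ (\Rmult \times \id_A)$, together with the functoriality \eqref{eq:T2functorial} of partial tangent morphisms, yields
\[
\Tpart{2}\Amod \circ \bigl(\id_R \times \Tpart{1}\Amod\bigr) = \Tpart{1}\Amod \circ \bigl(\Tpart{2}\Rmult \times \id_A\bigr).
\]
Hence $\Tpart{2}\Amod(r, \sigma(v)) = \Tpart{1}\Amod\bigl(\Tpart{2}\Rmult(r, \zeta_1), v\bigr)$.

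The vector $\Tpart{2}\Rmult(r, \zeta_1)$ lies over $\Rzero$. Its $\Val_R$-value is $\Rmult(r, \Runit) = r$; this is the case $A = R$ of the statement, but only on the zero fiber. That case holds because $R$ is tangent-stable as a module over itself, so $T_{\Rzero}R \cong R$ is $R$-linear, and it can be checked directly by Steps 1--2 with $A = R$, where $\sigma$ is just the inverse of $\Val_R$ on $T_{\Rzero}R$.

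Finally, $\Tpart{1}\Amod(-, v)$ restricted to $T_{\Rzero}R \cong R$ is $R$-linear into $T_{\Azero}A$. This follows from diagram~\eqref{diag:ScalarMult3} and the naturality of $\kappa$ applied to $\Amod$, using Proposition~\ref{prop:RCartTan} and $\kappa_R(s, 0_r) = 0_r$. Therefore $\Val_A \Tpart{2}\Amod(r, \sigma(v)) = \Amod(r, \Val_A \sigma(v)) = \Amod(r, v)$, which is the claim on the zero fiber and, by Step~1, in general.
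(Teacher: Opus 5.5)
\textbf{There is a genuine gap: Step~2 is never proved, and Step~3 depends on it.}

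Your Step~1 is fine. So is the bookkeeping in Step~3, with one small correction: for the $R$-linearity of $\Tpart{1}\Amod(-,v)$ you need $\kappa_A(s,0_A(v))=0_A(v)$, i.e.\ \eqref{diag:RzeroPreserve2} for $X=A$, together with naturality of $\kappa$. But the argument rests on two facts you never establish.
\begin{itemize}
\item[(a)] $\sigma$ inverts $\Val_A$ on $T_{\Azero}A$. This is Step~2, which you flag as the main obstacle and leave as an expectation.
\item[(b)] $\Val_R\bigl(\Tpart{2}\Rmult(r,\zeta_1)\bigr)=r$.
\end{itemize}
Your justification of (b) does not work. The $R$-linearity of $T_{\Rzero}R\cong R$ is linearity for $\kappa_R$, whereas you need a statement about $\Tpart{2}\Rmult$. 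Identifying these two $R$-actions on the zero fiber is exactly the content of the proposition for $A=R$; the paper derives \eqref{eq:LeibnizRuleR1} \emph{from} the proposition. Rerunning Steps~1--3 with $A=R$ is circular, because Step~3 consumes the same fact. By commutativity of $\Rmult$, $\Tpart{2}\Rmult(r,\zeta_1)=\Tpart{1}\Rmult(\zeta_1,r)$, which is $\sigma(r)$ for $A=R$. So (b) is just Step~2 for $A=R$, and everything hinges on Step~2.

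\textbf{Step~2 cannot follow from the tools you list.} Compose the isomorphism $T_{\Azero}A\cong A$ with an $R$-module automorphism $\theta$ of $A$, e.g.\ $-\id_A$. This preserves $R$-linearity, \eqref{diag:ScalarMult2} and unitality. It replaces $\Val_A$ by $\theta^{-1}\circ\Val_A$ and leaves $\sigma$ unchanged. So $\Val_A\circ\sigma=\id_A$ requires a normalization such as \eqref{eq:DiffObj3}, whereas the proposition itself is invariant under this change.

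\textbf{The missing idea is to differentiate the linearity of the isomorphism itself.}
\begin{itemize}
\item[1.] Let $\phi\colon A\to TA$ be the isomorphism onto $T_{\Azero}A$. Then $\Val_A\circ\phi=\id_A$ and $\phi\circ\Amod=\kappa_A\circ(\id_R\times\phi)$.
\item[2.] By \eqref{eq:T2functorial} and \eqref{diag:ScalarMult3}, $T\phi\circ\Tpart{2}\Amod=\tau_A\circ\kappa_{TA}\circ(\id_R\times\tau_A\circ T\phi)$.
\item[3.] Since $\pi_A\circ\phi$ is constant at $\Azero$, both $\pi_{TA}\circ T\phi$ and $T\pi_A\circ T\phi$ are constant at $0_A(\Azero)$ on $T_{\Azero}A$. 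The pullback \eqref{eq:TanFun4} then gives $T\phi(\xi)=\lambda_A(\beta(\xi))$ for $\xi\in T_{\Azero}A$.
\item[4.] Using $\tau\circ\lambda=\lambda$ from \eqref{eq:TanFun5}, then \eqref{diag:ScalarMult1}, and finally naturality of $\kappa$, you get $\tau_A\kappa_{TA}\bigl(r,\tau_A\lambda_A\beta(\xi)\bigr)=\lambda_A\kappa_A\bigl(r,\beta(\xi)\bigr)=\kappa_{TA}\bigl(r,\lambda_A\beta(\xi)\bigr)=T\phi\bigl(\kappa_A(r,\xi)\bigr)$.
\item[5.] $T\phi$ has the retraction $T\Val_A$, so $\Tpart{2}\Amod(r,\xi)=\kappa_A(r,\xi)$ for $\xi\in T_{\Azero}A$.
\end{itemize}
The $R$-linearity of $\Val_A$ on that fiber, combined with your Step~1, then gives the proposition. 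This route needs neither $\sigma$, nor \eqref{diag:ScalarMult2}, nor the case $A=R$.
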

In other words, Proposition~\ref{prop:RmodLinearization} states that the tangent map of the $R$-multiplication of a tangent-stable $R$-module is the $R$-multiplication itself.

\begin{Proposition}[Proposition~4.17 in \cite{AintablianBlohmann:2026}]
\label{prop:LeibnizRuleR}
In a cartesian tangent category with scalar $R$-multiplication, the multiplication $\Rmult$ and the addition $\Radd$ of $R$ satisfy
\begin{equation*}
  \Val_R \circ T\Rmult
  =
  \Radd \circ \bigl( \Rmult \circ (\Val_R \times \pi_R),
  \Rmult \circ (\pi_R \times \Val_R) \bigr)
  \circ \chi_{R,R}
  \,.
\end{equation*}
\end{Proposition}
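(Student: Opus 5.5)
We prove the Leibniz rule by splitting the tangent map of $\Rmult$ into its two partial tangent morphisms with Proposition~\ref{prop:TSumPartials}, and then computing each partial with Proposition~\ref{prop:RmodLinearization}. For the product $R \times R$ and $f = \Rmult$, Proposition~\ref{prop:TSumPartials} gives
\begin{equation*}
  T\Rmult = +_R \circ \bigl( \Tpart{1}\Rmult \circ (\id_{TR} \times \pi_R),\,
  \Tpart{2}\Rmult \circ (\pi_R \times \id_{TR}) \bigr) \circ \chi_{R,R}
  \,.
\end{equation*}
We compose with $\Val_R$ and use the first diagram of \eqref{eq:DiffObj1}, which says that $\Val_R \circ +_R = \Radd \circ (\Val_R \times \Val_R)$ on $TR \times_R TR$. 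This reduces the claim to two identities for the partials:
\begin{equation*}
  \Val_R \circ \Tpart{2}\Rmult = \Rmult \circ (\id_R \times \Val_R)
  \,,\qquad
  \Val_R \circ \Tpart{1}\Rmult = \Rmult \circ (\Val_R \times \id_R)
  \,.
\end{equation*}
Once these hold, the two summands become $\Rmult \circ (\Val_R \times \pi_R)$ and $\Rmult \circ (\pi_R \times \Val_R)$ precomposed with $\chi_{R,R}$, which is the asserted formula.

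For the second partial, view $R$ as an $R$-module over itself with $\Amod = \Rmult$. By Definition~\ref{def:TanStableRing} this module is tangent-stable, so Proposition~\ref{prop:RmodLinearization} applies directly and yields $\Val_R \circ \Tpart{2}\Rmult = \Rmult \circ (\id_R \times \Val_R)$.

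For the first partial, we use commutativity of $R$. Let $\sigma: R \times R \to R \times R$ be the swap. Then $\Rmult = \Rmult \circ \sigma$. Naturality of partial tangent maps, in the form of the functoriality \eqref{eq:T2functorial} combined with $\chi$ intertwining the swap, should give
\begin{equation*}
  \Tpart{1}\Rmult = \Tpart{2}\Rmult \circ \sigma'
  \,,
\end{equation*}
where $\sigma': TR \times R \to R \times TR$ is the swap. Checking this is the main technical point, though it is routine: one has to verify that $\chi_{R,R} \circ T\sigma = \sigma_{TR,TR} \circ \chi_{R,R}$, which holds by the definition of $\chi$, and that the zero sections are inserted compatibly. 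Composing with $\Val_R$ and applying the second-partial identity then gives $\Val_R \circ \Tpart{1}\Rmult = \Rmult \circ \sigma \circ (\Val_R \times \id_R)$. Commutativity of $\Rmult$ turns this into $\Rmult \circ (\Val_R \times \id_R)$.

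Finally, we check that the pairing in the sum is over the correct fiber product. The two arguments of $+_R$ both lie over $\Rmult \circ (\pi_R \times \pi_R)$, so \eqref{eq:DiffObj1} applies. With that, the substitution in the first paragraph completes the proof.
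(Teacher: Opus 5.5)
Your proof is correct and follows the route the paper itself indicates: Proposition~\ref{prop:TSumPartials} splits $T\Rmult$ into its partial tangent morphisms. Proposition~\ref{prop:RmodLinearization}, together with commutativity of $\Rmult$ for the first slot, gives \eqref{eq:LeibnizRuleR1} and \eqref{eq:LeibnizRuleR2}, and the first diagram of \eqref{eq:DiffObj1} then assembles the result. One small remark: \eqref{eq:T2functorial} only covers product maps $\alpha \times \beta$, so it does not handle the swap. The identity $\Tpart{1}\Rmult = \Tpart{2}\Rmult \circ \sigma'$ is established by your direct check, $\chi_{R,R} \circ T\sigma = \sigma_{TR,TR} \circ \chi_{R,R}$ together with $\sigma_{TR,TR} \circ (\id_{TR} \times 0_R) = (0_R \times \id_{TR}) \circ \sigma'$, and that check is complete.
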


From Proposition~\ref{prop:RmodLinearization} we obtain
\begin{align}
\label{eq:LeibnizRuleR1}
  \Val_R \circ \Tpart{2} \Rmult
  &= \Rmult \circ (\id_R \times \Val_R)
  \\
\label{eq:LeibnizRuleR2}
  \Val_R \circ \Tpart{1} \Rmult
  &= \Rmult \circ (\Val_R \times \id_R)
  \,.
\end{align}

\subsection{The tangent Lie algebroid}

Let $X$ be an object in a cartesian tangent category with scalar $R$-multiplication. Recall from Example~\ref{ex:PointsOfR} that the set $\Hom(X,R)$ of $R$-valued morphisms is equipped with a ring structure given by
\begin{equation*}
  f + g \coloneqq \Radd \circ (f,g)
  \,,\qquad
  fg \coloneqq \Rmult \circ (f,g)
\end{equation*}
for all morphisms $f, g \in \Hom(X,R)$.
Assume that $R$ is tangent-stable. Then we can define an action of vector fields on functions by
\begin{equation}
\label{eq:VecFieldFunc}
  v \cdot f: X \xrightarrow{~v~}
  TX \xrightarrow{~Tf~} TR \xrightarrow{~\Val_R~}
  R
\end{equation}
for all $v \in \Gamma(X,TX)$ and $f \in \Hom(X,R)$.

\begin{Proposition}[Proposition~5.3 in \cite{AintablianBlohmann:2026}]
\label{prop:VecActFunc}
In a cartesian tangent category with scalar $R$-multiplication, the action~\eqref{eq:VecFieldFunc} is a representation of the Lie algebra of vector fields by derivations on the ring of $R$-valued functions. That is,
\begin{subequations}
\begin{align}
\label{eq:VecActFuncA}
  v \cdot (f + g)
  &= v \cdot f + v \cdot g
  \\
\label{eq:VecActFuncB}
  (v+w) \cdot f
  &=
  v \cdot f + w \cdot f
  \\
\label{eq:VecActFunc1}
  [v,w] \cdot f
  &= v \cdot (w \cdot f) - w \cdot (v \cdot f)
  \\
\label{eq:VecActFunc2}
  v \cdot (fg)
  &= (v \cdot f)\,g + f\,(v \cdot g)
  \,,
\end{align}
\end{subequations}
for all $v, w \in \Gamma(X,TX)$ and $f,g \in \Hom(X,R)$.
\end{Proposition}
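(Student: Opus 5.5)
The plan is to prove the four identities one at a time. Each one comes from naturality of the tangent structure together with the properties of $\Val_R$ listed in Proposition~\ref{prop:TangStabDiffObj} and the lemmas after it. Throughout, for $f \in \Hom(X,R)$ we use $\pi_R \circ Tf \circ v = f \circ \pi_X \circ v = f$, so that $v \cdot f = \Val_R \circ Tf \circ v$ sits over $f$.

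For \eqref{eq:VecActFuncA} we use functoriality and \eqref{eq:chifg}:
\[
  T\bigl(\Radd \circ (f,g)\bigr) = T\Radd \circ \chi_{R,R}^{-1} \circ (Tf, Tg).
\]
Then the first square of \eqref{eq:DiffObj2} gives $\Val_R \circ T\Radd \circ \chi_{R,R}^{-1} = \Radd \circ (\Val_R \times \Val_R)$. Precomposing with $v$ yields $v\cdot f + v \cdot g$.

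For \eqref{eq:VecActFuncB} we use that $+$ is natural, so $Tf \circ +_X = +_R \circ (Tf \times_f Tf)$. Applying the first diagram of \eqref{eq:DiffObj1} to $(Tf\circ v, Tf \circ w)$ then gives the sum.

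For the Leibniz rule \eqref{eq:VecActFunc2} we write $T(fg) = T\Rmult \circ \chi_{R,R}^{-1} \circ (Tf,Tg)$ and apply Proposition~\ref{prop:LeibnizRuleR}. Since $\pi_R \circ Tf \circ v = f$ and $\pi_R \circ Tg \circ v = g$, this yields $(v\cdot f)\,g + f\,(v \cdot g)$.

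The main step is the bracket relation \eqref{eq:VecActFunc1}. The idea is to apply $\Val_R \circ T\Val_R \circ T^2 f$ to both sides of \eqref{eq:deltaBracketRel} and compare.

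Left-hand side, $\delta(v,w)$. By naturality, $T^2 f$ commutes with $-_{T}$, since $T^2f = T(Tf)$ respects the bundle structure of $\pi_T$. Corollary~\ref{cor:AaddTminus} then splits the expression into a difference of two terms.
\begin{itemize}
\item First term: $\Val_R \circ T\Val_R \circ T^2 f \circ Tw \circ v = \Val_R \circ T(\Val_R \circ Tf \circ w) \circ v = v\cdot(w\cdot f)$.
\item Second term: by naturality of $\tau$, $T^2 f \circ \tau_X = \tau_R \circ T^2 f$. Then \eqref{eq:EtaTEtaSymmetric01} removes $\tau_R$, and the same computation gives $w\cdot(v\cdot f)$.
\end{itemize}

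Right-hand side, $(\lambda_2)_X \circ (w,[v,w])$. Naturality of $\lambda_2$, which follows from that of $\lambda$, $0$, $+$ and $\tau$, gives $T^2f \circ (\lambda_2)_X = (\lambda_2)_R \circ (Tf \times_f Tf)$. Then \eqref{eq:EtaTEtalambda2} reduces the expression to $\Val_R \circ Tf \circ [v,w] = [v,w]\cdot f$.

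I expect the only delicate point to be bookkeeping rather than ideas. One must check that $T^2 f$ commutes with $-_{TX}$ in the correct bundle structure, namely over $\pi_{T}$ rather than $T\pi$. One must also check that the pairs fed into Corollary~\ref{cor:AaddTminus} lie in the fiber product over $TR$ that it requires.
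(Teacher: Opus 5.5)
Your proof is correct. The identities \eqref{eq:VecActFuncA}, \eqref{eq:VecActFuncB} and \eqref{eq:VecActFunc2} follow from \eqref{eq:DiffObj2}, \eqref{eq:DiffObj1} and Proposition~\ref{prop:LeibnizRuleR} as you describe. For \eqref{eq:VecActFunc1}, applying $\Val_R \circ T\Val_R \circ T^2 f$ to both sides of \eqref{eq:deltaBracketRel} works: you use naturality of $-$, $\tau$ and $\lambda_2$, together with Corollary~\ref{cor:AaddTminus}, \eqref{eq:EtaTEtaSymmetric01} and \eqref{eq:EtaTEtalambda2}, and the fiber-product bookkeeping checks out because both arguments of $-_{TR}$ lie over $Tf \circ w$. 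The paper does not reproduce a proof and only cites \cite{AintablianBlohmann:2026}, so I cannot compare against it directly; your route is the natural one that these quoted identities for $\Val_R$ are set up to support.
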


The scalar multiplication $\kappa_X: R \times TX \to TX$ equips the abelian group of vector fields with the structure of an $\Hom(X,R)$-module, given by
\begin{equation}
\label{eq:fvDef}
  fv \coloneqq \kappa_X \circ (f,v)
  \,.
\end{equation}

\begin{Proposition}[Proposition~5.6 in \cite{AintablianBlohmann:2026}]
\label{prop:LeibnizRule}
In a cartesian tangent category with scalar $R$-multiplication, the \textbf{Leibniz rule}
\begin{equation}
\label{eq:LeibnizRule}
  [v,fw] = (v \cdot f)w + f[v,w]
\end{equation}
holds for all vector fields $v, w \in \Gamma(X,TX)$ and all functions $f: X \to R$.
\end{Proposition}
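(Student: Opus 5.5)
The plan is to compute the morphism $\delta(v,fw)$ of \eqref{eq:DeltaOrig} explicitly, rewrite it in the form $(\lambda_2)_X \circ (fw, \xi)$, and then read off $[v,fw]=\xi$ from the uniqueness part of the pullback \eqref{diag:lambda2pullback} and the defining relation \eqref{eq:deltaBracketRel}.

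First, write $fw = \kappa_X \circ (f,w)$, so $T(fw) = T\kappa_X \circ \chi_{R,TX}^{-1}\circ (Tf, Tw)$. Proposition~\ref{prop:TSumPartials} splits $T\kappa_X$ into the sum of its two partial tangent morphisms. The first summand is $\Tpart{1}\kappa_X \circ (Tf\circ v, w)$. By axiom~\eqref{diag:ScalarMult2} it equals $(\lambda_2)_X \circ \bigl(fw, (\Val_R\circ Tf\circ v)\,w\bigr) = (\lambda_2)_X\circ (fw, (v\cdot f)w)$, using \eqref{eq:VecFieldFunc}. The second summand is $\Tpart{2}\kappa_X \circ (f, Tw\circ v)$. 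By axiom~\eqref{diag:ScalarMult3} it equals $\tau_X \circ \kappa_{TX}\circ (f, \tau_X\circ Tw\circ v)$. For the other entry of $\delta(v,fw)$ I will use naturality of $\kappa$ applied to the morphism $v$, which gives $Tv\circ \kappa_X = \kappa_{TX}\circ(\id_R\times Tv)$. Hence $\tau_X\circ Tv\circ fw = \tau_X \circ \kappa_{TX}\circ(f, Tv\circ w)$, which by \eqref{diag:ScalarMult3} again equals $\Tpart{2}\kappa_X\circ (f, \tau_X\circ Tv\circ w)$.

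Both "$\Tpart{2}\kappa_X$-terms" have the form $\Tpart{2}\kappa_X\circ(f,-)$. Their difference can therefore be pulled inside using the linearity \eqref{eq:T2linear} of $\Tpart{2}\kappa_X$ in its second argument (together with the analogous statement for inverses). This gives $\Tpart{2}\kappa_X\circ\bigl(f, \delta(v,w)\bigr) = \Tpart{2}\kappa_X \circ \bigl(f, (\lambda_2)_X\circ(w,[v,w])\bigr)$. Next I need the compatibility of $\lambda_2$ with scalar multiplication, namely
\begin{equation*}
  \Tpart{2}\kappa_X\circ(\id_R\times(\lambda_2)_X) = (\lambda_2)_X \circ \bigl(\kappa_X\circ(\pr_1,\pr_2),\,\kappa_X\circ(\pr_1,\pr_3)\bigr).
\end{equation*}
I will derive it from \eqref{eq:VertLiftExt}, using \eqref{diag:ScalarMult1} for the $\lambda$ part, \eqref{diag:ScalarMult3} for $\tau$, naturality of $\kappa$ for $T0$, and linearity (v) of Proposition~\ref{prop:RModuleBundle} in $TX$ for $+_{TX}$. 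With this compatibility, the combined term becomes $(\lambda_2)_X\circ(fw, f[v,w])$.

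The main obstacle is the bookkeeping of the additive structures. The sum in Proposition~\ref{prop:TSumPartials} is taken in $T_2(TX)$ over $\pi_{TX}$, whereas the difference in $\delta$ is $-_{TX}$ and $\lambda_2$ involves $+_{TX}$ after a $\tau$. I must check that these combine via the interchange \eqref{eq:TanFun2} of $\tau$ with addition and the additivity of $\lambda_2$ (from \eqref{eq:TanFun3add}). The goal is to show that $(\lambda_2)_X$ is additive in its second argument for fixed first argument, so that
\begin{equation*}
  \delta(v,fw) = (\lambda_2)_X\circ\bigl(fw, (v\cdot f)w + f[v,w]\bigr).
\end{equation*}
I would first verify additivity of $\lambda_2$ on the fiber over $0$ of $T\pi$ by a direct diagram chase, since that is exactly where all terms live by \eqref{eq:DeltaKerOrig}. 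Once this is done, uniqueness in the pullback \eqref{diag:lambda2pullback} yields \eqref{eq:LeibnizRule}.
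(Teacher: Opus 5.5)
The paper does not prove this proposition; it is quoted from \cite{AintablianBlohmann:2026}, so there is no in-paper argument to compare against. Your proof is correct. It uses the same ingredients as the paper's parallel computation \eqref{eq:fvPrime} in the proof of Lemma~\ref{lem:fPrimevPrime}:
\begin{itemize}
\item split $T\kappa_X$ by Proposition~\ref{prop:TSumPartials};
\item recognize the $\Tpart{1}\kappa_X$-term as $(\lambda_2)_X\circ\bigl(fw,(v\cdot f)w\bigr)$ via \eqref{diag:ScalarMult2};
\item handle the $\Tpart{2}\kappa_X$-terms via \eqref{diag:ScalarMult3};
\item conclude, as at the end of Lemma~\ref{lem:ProlongBracket}, from the fact that $(\lambda_2)_X$ is a monomorphism by \eqref{diag:lambda2pullback}.
\end{itemize}
Your compatibility of $\lambda_2$ with scalar multiplication, $\Tpart{2}\kappa_X\circ\bigl(f,(\lambda_2)_X\circ(a,b)\bigr) = (\lambda_2)_X\circ(fa,fb)$, does follow from the ingredients you list. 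The rearrangement of $\delta(v,fw)$ is also legitimate. All terms lie over $fw$ for $\pi_{TX}$, and morphisms $X\to T^2X$ over $fw$ form an abelian group.

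The step you flag as the main obstacle is easier than you expect. Use the second expression in \eqref{eq:VertLiftExt}:
\[
(\lambda_2)_X\circ(a,b) = T{+_X}\circ\nu_{2,X}^{-1}\circ(0_{TX}\circ a,\lambda_X\circ b).
\]
By naturality of $+$, both $T{+_X}$ and $\nu_{2,X}$ respect the additions of the $\pi$-bundles. Moreover $0_{TX}\circ a + 0_{TX}\circ a = 0_{TX}\circ a$, and $\lambda_X$ is additive by \eqref{eq:TanFun3add}. Together these give
\[
(\lambda_2)_X\circ(a,b) +_{TX} (\lambda_2)_X\circ(a,c) = (\lambda_2)_X\circ(a, b+_X c)
\]
in one line. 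This needs neither \eqref{eq:TanFun2} nor a restriction to the fiber of $T\pi$ over $0$.
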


\begin{Definition}[Definition~7.1 in \cite{AintablianBlohmann:2025}]
\label{def:AbstractLieAlgd}
Let $\calC$ be a cartesian tangent category with scalar $R$-mul\-ti\-pli\-ca\-tion. An \textbf{abstract Lie algebroid} in $\calC$ consists of a bundle of $R$-modules $A \to X$, a morphism $\rho: A \to TX$ of bundles of $R$-modules, called the \textbf{anchor}, and a Lie bracket on the abelian group $\Gamma(X,A)$, such that
\begin{align}
\label{eq:LieAlgdLeibniz}
  [a, fb] &= f[a,b] + \bigl((\rho \circ a) \cdot f \bigr) b
  \\
\label{eq:anchorLieAlgMap}
  \rho \circ [a,b] &= [\rho \circ a , \rho \circ b]
\end{align}
for all sections $a$, $b$ of $A \to X$ and all morphisms $f: X \to R$ in $\calC$.
\end{Definition}

\begin{Theorem}[Theorem 6.12 in \cite{AintablianBlohmann:2026}]
\label{thm:TangentLieAlgd}
Let $X$ be an object in a cartesian tangent category with scalar $R$-multiplication. Then the tangent bundle $TX \to X$ with the anchor $\id: TX \to TX$ and the Lie bracket of vector fields is an abstract Lie algebroid, called the \textbf{tangent Lie algebroid} of $X$.
\end{Theorem}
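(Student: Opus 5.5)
This theorem is cited from \cite{AintablianBlohmann:2026}, so the plan is to verify the two axioms of Definition~\ref{def:AbstractLieAlgd} for $A = TX$ with $\rho = \id_{TX}$.

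\textbf{The bracket and anchor.} By Rosick\'y's theorem (proved in \cite{CockettCruttwell:2015}), the bracket $[v,w]$ defined by \eqref{eq:deltaBracketRel} is a Lie bracket on $\Gamma(X,TX)$. It is additive, antisymmetric, and satisfies the Jacobi identity. The anchor $\id_{TX}$ is trivially a morphism of bundles of $R$-modules. Condition \eqref{eq:anchorLieAlgMap} then reads $[v,w] = [v,w]$ and holds tautologically. The module structure on sections is the one given by \eqref{eq:fvDef}.

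\textbf{The Leibniz rule.} The only substantive axiom is \eqref{eq:LieAlgdLeibniz}, which with $\rho = \id$ becomes $[v,fw] = f[v,w] + (v\cdot f)w$. This is exactly Proposition~\ref{prop:LeibnizRule}, which we may invoke.

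If instead we had to prove the Leibniz rule from scratch, we would proceed as follows.
\begin{enumerate}[label=(\arabic*)]
\item Write $fw = \kappa_X \circ (f,w)$.
\item Compute $T(fw) \circ v = T\kappa_X \circ \chi^{-1}_{R,TX} \circ (Tf\circ v, Tw \circ v)$.
\item Split this using Proposition~\ref{prop:TSumPartials} into the partial tangents $\Tpart{1}\kappa_X$ and $\Tpart{2}\kappa_X$.
\item Rewrite $\Tpart{1}\kappa_X$ via axiom \eqref{diag:ScalarMult2}. This produces a term $\lambda_2(fw, (v\cdot f)w)$.
\item Rewrite $\Tpart{2}\kappa_X$, composed with $\tau$, via axiom \eqref{diag:ScalarMult3} as $\kappa_{TX}$ applied to $\tau \circ Tw \circ v$.
\item Expand the other term $\tau \circ Tv \circ (fw)$ using naturality of $\tau$ and the linearity \eqref{diag:ScalarMult1} of $\lambda$ under $\kappa$.
\item Take the difference $\delta(v,fw)$. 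It becomes $\lambda_2$ applied to $\bigl(fw,\, f[v,w] + (v\cdot f)w\bigr)$.
\item Conclude by the uniqueness in the pullback \eqref{diag:lambda2pullback}.
\end{enumerate}

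The main obstacle in that route is the bookkeeping of the additive structures on $T^2X$ in step (7). The addition $T{+}$ and the addition ${+}T$ must be interchanged correctly when subtracting. This uses the compatibility diagram \eqref{eq:TanFun2} of $\tau$ with addition and the additivity \eqref{eq:TanFun3add} of $\lambda$.
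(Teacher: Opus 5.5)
Your proposal is correct and is essentially the argument behind this result, which the paper does not reprove but cites from the companion paper. The three ingredients are: the bracket is a Lie bracket by Rosick\'y and Cockett--Cruttwell, the anchor condition is tautological for $\rho = \id_{TX}$, and the Leibniz rule~\eqref{eq:LieAlgdLeibniz} is exactly Proposition~\ref{prop:LeibnizRule}. Your from-scratch sketch is not needed, but step~(6) cites the wrong tools. The identity $\tau_X \circ Tv \circ (fw) = \Tpart{2}\kappa_X \circ (f, \tau_X \circ Tv \circ w)$ comes from naturality of $\kappa$ and Diagram~\eqref{diag:ScalarMult3}, not from naturality of $\tau$. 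Diagram~\eqref{diag:ScalarMult1} is instead what you need in step~(7), to identify the ${+}T$-difference of the two $\Tpart{2}\kappa_X$-terms with $(\lambda_2)_X \circ (fw, f[v,w])$.
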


It was proved in Proposition~3.5 and Proposition~4.5 of \cite{AintablianBlohmann:2026} that the natural isomorphism $\chi_{X,Y}: T(X \times Y) \to TX \times TY$ is compatible with the tangent structure and the scalar $R$-multiplication. Therefore, we can and from now on will identify $T(X \times Y) \cong TX \times TY$, which amounts to a choice of the limit representing the categorical product.

\section{The Cartan calculus of differential cubical forms}
\label{sec:cubical}

\subsection{Differential cubical forms}

We recall that a $(p,q)$-shuffle is a permutation $\sigma \in S_{p+q}$ that satisfies $\sigma(1) < \ldots < \sigma(p)$ and $\sigma(p+1) < \ldots < \sigma(p+q)$. The set of $(p,q)$-shuffles $\Shuff(p,q)$ is a set of representatives of the left cosets $S_{p+q}/(S_p \times S_q)$. The inverse of a shuffle is called an unshuffle. The set of $(p,q)$-unshuffles $\Shuff(p,q)^{-1}$ is a set of representatives of the right cosets $(S_p \times S_q)\backslash S_{p+q}$.

The \textbf{block shuffle} $\Swp_{p,q} \in \Shuff(p,q)$ is the permutation
\begin{equation}
\label{eq:ijShuffle}
  \Swp_{p,q}(k)
  =
  \begin{cases}
     k+q &; 1 \leq k \leq p \\
     k-p &; p < k \leq p+q \,,
  \end{cases}
\end{equation}
which swaps $\{1, \ldots, p\}$ with $\{p+1, \ldots, p+q\}$. By definition, we set
\begin{equation*}
 \Swp_{p+q,0} = \Swp_{0,p+q} = \id
 \,.
\end{equation*}
Observe that $\Swp_{p,q}^{-1} = \Swp_{q,p} \in \Shuff(q,p)$. The length of the permutation is
\begin{equation*}
  |\Swp_{p,q}| = pq
  \,.
\end{equation*}
Moreover, we have the relation
\begin{equation}
\label{eq:ShuffSwp}
  \Shuff(q,p)\circ \Swp_{p,q}
  = \Shuff(p,q)
  \,.
\end{equation}

We recall that the symmetric structure on $T$ gives rise to the action of the symmetric group $S_n$ on $T^n X$. We denote the map $S_n \to \Aut(T^n X)$ by $\sigma \mapsto \sigma_X$.

\begin{Definition}
A morphism $\omega\colon T^n X \to R$ is \textbf{antisymmetric} if
\begin{equation*}
  \omega \circ \sigma_X = (-1)^{|\sigma|} \omega
\end{equation*}
for all $\sigma \in S_n$.
\end{Definition}

For $1 \leq i \leq n$ we introduce the morphisms
\begin{align*}
  +^{(i)}_{T^{n-1} X}
  &\colon
  (T^{i-1}T_2 T^{n-i})X \longrightarrow T^n X
  \\
  \kappa^{(i)}_{T^{n-1}X}
  &\colon
  R \times T^n X \longrightarrow T^n X
\end{align*}
of addition and scalar multiplication \textbf{at the $i$-th position}, defined by
\begin{align}
  \label{eq:TaddAti}
  +^{(i)}_{T^{n-1}X}
  &\coloneqq
  (\Swp_{1,i-1}T^{n-i})_X \circ +_{T^{n-1}X} \circ
  \bigl((\Swp_{i-1,1}T^{n-i})_X \times (\Swp_{i-1,1}T^{n-i})_X \bigr)
  \\
  \label{eq:RmultAti}
  \kappa^{(i)}_{T^{n-1}X}
  &\coloneqq
  (\Swp_{1,i-1}T^{n-i})_X \circ \kappa_{T^{n-1}X} \circ \bigl(\id_R \times (\Swp_{i-1,1}T^{n-i})_X \bigr)
  \,.
\end{align}
At the first position, we retrieve the usual operations
\begin{equation*}
  +^{(1)}_{T^{n-1}X} = +_{T^{n-1}X}
  \,,\quad
  \kappa^{(1)}_{T^{n-1}X} = \kappa_{T^{n-1}X}
  \,.
\end{equation*}

\begin{Lemma}
We have the following recursion relations:
\begin{align}
\label{eq:AddPosRecurs}
  +^{(i+1)}_{T^n X}
  &= T+^{(i)}_{T^{n-1}X}
  \\
\label{eq:ScalMultPosRecurs}
  \kappa^{(i+1)}_{T^n X}
  &= T_{(2)} \kappa^{(i)}_{T^{n-1}X}
  \,.
\end{align}
\end{Lemma}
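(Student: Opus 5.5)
The plan is to unfold the definitions \eqref{eq:TaddAti} and \eqref{eq:RmultAti} at position $i+1$ on $T^{n+1}X$ and show that every factor is $T$ applied to the corresponding factor at position $i$ on $T^nX$. For the addition this uses functoriality and naturality of $+$. For the scalar multiplication the only substantive step is one use of axiom \eqref{diag:ScalarMult3}.

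\textbf{Step 1: the block shuffles.} By definition, $\Swp_{1,i}$ acting on $T^{i+1}T^{n-i}$ is a product of adjacent transpositions, each of the form $T^{k-1}\tau T^{\cdot}$. Write it with its first factor $\tau T^{n-1}$ separated off:
\begin{equation*}
  (\Swp_{1,i}T^{n-i})_X = (T\,\Swp_{1,i-1}T^{n-i})_X \circ (\tau T^{n-1})_X \,.
\end{equation*}
The factor $T\,\Swp_{1,i-1}T^{n-i}$ is $T$ applied to the shuffle used at level $n$. Dually,
\begin{equation*}
  (\Swp_{i,1}T^{n-i})_X = (\tau T^{n-1})_X \circ (T\,\Swp_{i-1,1}T^{n-i})_X \,.
\end{equation*}
I need to confirm the order of composition from the convention $\sigma\mapsto\sigma_X$, but the braid relations make either consistent choice work.

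\textbf{Step 2: addition.} Substituting Step 1 into \eqref{eq:TaddAti} gives
\begin{equation*}
  +^{(i+1)}_{T^nX} = T\Swp_{1,i-1}\circ\tau\circ{+_{T^nX}}\circ(\tau\times\tau)\circ(T\Swp_{i-1,1}\times T\Swp_{i-1,1}) \,.
\end{equation*}
Here I suppress the whiskerings by $T^{n-1}$ and $T^{n-i}$. The core identity is the second diagram of \eqref{eq:TanFun2}, applied to the object $T^{n-1}X$:
\begin{equation*}
  \tau\circ{+T}\circ(\tau\times_T\tau) = T{+}\circ\nu_2^{-1} \,.
\end{equation*}
Since $\tau$ is an involution, this rewrites $\tau\circ +_{T^nX}\circ(\tau\times\tau)$ as $T(+_{T^{n-1}X})$, up to the identification $\nu_2$. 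The outer factors are already $T$ of the level-$n$ factors. Functoriality of $T$, together with its preservation of fiber products (so that $T(f\times_X f)$ corresponds to $Tf\times_{TX}Tf$ under $\nu_2$), then gives $T+^{(i)}_{T^{n-1}X}$.

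\textbf{Step 3: scalar multiplication.} Substituting Step 1 into \eqref{eq:RmultAti} gives
\begin{equation*}
  \kappa^{(i+1)}_{T^nX} = T\Swp_{1,i-1}\circ\tau\circ\kappa_{T^nX}\circ(\id_R\times\tau)\circ(\id_R\times T\Swp_{i-1,1}) \,.
\end{equation*}
Axiom \eqref{diag:ScalarMult3} for the object $T^{n-1}X$ states exactly
\begin{equation*}
  \tau\circ\kappa_{T^nX}\circ(\id_R\times\tau) = \Tpart{2}\kappa_{T^{n-1}X} \,.
\end{equation*}
The remaining outer factors are of the form $T\gamma$ and $\id_R\times T\beta$. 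The functoriality property \eqref{eq:T2functorial} of partial tangent maps, applied with $\alpha=\id_R$, then absorbs them: $\Tpart{2}$ of the composite $\gamma\circ\kappa\circ(\id_R\times\beta)$ equals $T\gamma\circ\Tpart{2}\kappa\circ(\id_R\times T\beta)$. This yields $\Tpart{2}\kappa^{(i)}_{T^{n-1}X}$.

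\textbf{Main obstacle.} The only delicate point is Step 1: getting the factorization of the block shuffles, and the whiskering indices, exactly right. I will check it first for $i=1$, where it reduces to $\Swp_{1,1}=\tau$ and $\Swp_{1,0}=\id$, and then argue by the definition of the $S_n$-action via adjacent transpositions.
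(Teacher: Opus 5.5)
Your proposal is correct and follows essentially the same route as the paper: factor $\Swp_{1,i}T^{n-i} = T\Swp_{1,i-1}T^{n-i}\circ\tau T^{n-1}$ (and its inverse), then use the second diagram of \eqref{eq:TanFun2} with the involutivity of $\tau$ for the addition, and Diagram~\eqref{diag:ScalarMult3} followed by functoriality of $\Tpart{2}$ for the scalar multiplication (the paper unfolds \eqref{eq:PartialT2} directly where you cite \eqref{eq:T2functorial}). One correction to your hedge in Step~1: the order of composition is not a matter of indifference, and the braid relations do not rescue the other order; it is fixed by the paper's convention $\Swp_{1,i}=\tau_{i,i+1}\circ\cdots\circ\tau_{12}$, and with the reversed factorization $\Swp_{1,i}=\tau T^{i-1}\circ T(\cdots)$ two things go wrong: $\tau$ would no longer be adjacent to $+$ or $\kappa$, and naturality of $+$ and $\kappa$ would cancel the remaining $T(\cdots)$ factors, so the formula would collapse to the operation at position $2$.
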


\begin{proof}
As a special case of the shuffle defined in~\eqref{eq:ijShuffle}, we have
\begin{equation*}
  \Swp_{1,i}
  = \tau_{i,i+1} \circ \ldots \circ
  \tau_{23} \circ \tau_{12}
  \,.
\end{equation*}
This shows that for the action of the shuffle on $T^n$ we have the recursive relation
\begin{equation*}
  \Swp_{1,i}
  = T\Swp_{1,i-1} \circ \tau T^{i-1}
  \,.
\end{equation*}
Using this relation, we get
\begin{equation*}
\begin{split}
  +_{T^n X}^{(i+1)}
  &=
  (T\Swp_{1,i-1}T^{n-i} \circ \tau T^{n-1})_X \circ +_{T^{n}X}
  \\
  &\qquad{}
  \circ
  \bigl((\tau T^{n-1} \circ T\Swp_{i-1,1}T^{n-i})_X \times
  (\tau T^{n-1} \circ T\Swp_{i-1,1}T^{n-i})_X \bigr)
  \\
  &=
  (T\Swp_{1,i-1}T^{n-i})_X \circ
  \bigl(\tau \circ +T \circ (\tau \times \tau)
  \bigr)_{T^{n-1}X}
  \\
  &\qquad{}
  \circ
  \bigl((T\Swp_{i-1,1}T^{n-i})_X \times
  (T\Swp_{i-1,1}T^{n-i})_X \bigr)
  \\
  &=
  (T\Swp_{1,i-1}T^{n-i})_X \circ  (T+)_{T^{n-1}X} \circ
  \bigl((T\Swp_{i-1,1}T^{n-i})_X \times
  (T\Swp_{i-1,1}T^{n-i})_X \bigr)
  \\
  &= T+_{T^{n-1}X}^{(i)}
  \,,
\end{split}
\end{equation*}
where we have used Diagram~\eqref{eq:TanFun2} and the functoriality of $T$. Similarly, we obtain for the scalar multiplication
\begin{equation*}
\begin{split}
  \kappa^{(i+1)}_{T^n X}
  &=
  (T\Swp_{1,i-1}T^{n-i})_X \circ
  (\tau T^{n-1})_X \circ \kappa_{T^n X} \circ
  \bigl(\id_R \times (\tau T^{n-1})_X \bigr)
  \\
  &\qquad{}
  \circ
  \bigl(\id_R \times (T\Swp_{i-1,1}T^{n-i})_X \bigr)
  \\
  &=
  (T\Swp_{1,i-1}T^{n-i})_X \circ
  T_{(2)}\kappa_{T^{n-1} X} \circ
  \bigl(\id_R \times T(\Swp_{i-1,1}T^{n-i})_X \bigr)
  \\
  &=
  (T\Swp_{1,i-1}T^{n-i})_X \circ
  T\kappa_{T^{n-1} X} \circ
  (0_R \times \id_{T^{n+1} X}) \circ
  \bigl(\id_R \times T(\Swp_{i-1,1}T^{n-i})_X \bigr)
  \\
  &=
  (T\Swp_{1,i-1}T^{n-i})_X \circ
  T\kappa_{T^{n-1} X} \circ
  \bigl(T\id_R \times T(\Swp_{i-1,1}T^{n-i})_X \bigr) \circ
  (0_R \times \id_{T^{n+1} X})
  \\
  &=
  T\kappa^{(i)}_{T^{n-1}X} \circ
  (0_R \times \id_{T^{n+1} X})
  \\
  &=
  T_{(2)}\kappa^{(i)}_{T^{n-1}X}
  \,,
\end{split}
\end{equation*}
where we have used Diagram~\eqref{diag:ScalarMult3}, the defining Equation~\eqref{eq:PartialT2} of $T_{(2)}$, the functoriality of $T$, and again Equation~\eqref{eq:PartialT2}.
\end{proof}

\begin{Definition}
\label{def:CubeFormAti}
A morphism $\omega\colon T^n X \to R$ will be called \textbf{$R$-linear at the $i$-th position} if the diagrams
\begin{equation}
\label{diag:CubForm1and2}
\begin{tikzcd}[column sep=2em]
\bigl( T^{i-1} (T_2) T^{n-i} \bigr)X
\ar[d, "+^{(i)}_{T^{n-1}X}"']
\ar[r, "\omega \times \omega"]
&
R \times R
\ar[d, "\Radd"]
\\
T^n X
\ar[r, "\omega"']
&
R
\end{tikzcd}
\qquad
\begin{tikzcd}
R \times T^n X
\ar[d, "\kappa^{(i)}_{T^{n-1}X}"']
\ar[r, "{\id_R \times \omega}"]
&
R \times R
\ar[d, "\Rmult"]
\\
T^n X
\ar[r, "\omega"']
&
R
\end{tikzcd}
\end{equation}
commute.
\end{Definition}

For later reference, we write the commutativity of the diagrams of Definition~\ref{def:CubeFormAti} as equations:
\begin{align}
\label{eq:CubForm1}
  \omega \circ {+^{(i)}_{T^{n-1}X}}
  &= \Radd \circ (\omega \times \omega)
  \\
\label{eq:CubForm2}
  \omega \circ \kappa^{(i)}_{T^{n-1}X}
  &= \Rmult \circ (\id_R \times \omega)
  \,.
\end{align}

\begin{Lemma}
\label{lem:AtiPermute}
If $\omega: T^n X \to R$ is $R$-linear at the $i$-th position, then for every $\sigma \in S_n$ the morphism $\omega \circ \sigma_X$ is $R$-linear at the $\sigma^{-1}(i)$-th position. If $\omega$ is antisymmetric and $R$-linear at the first position, then it is $R$-linear at every position.
\end{Lemma}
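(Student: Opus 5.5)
The plan is to show that the operations at the $i$-th position transform under the $S_n$-action as positional indices do. Concretely, I will establish two intertwining relations for every $\sigma \in S_n$ and every $1 \le i \le n$:
\begin{equation*}
  \sigma_X \circ +^{(j)}_{T^{n-1}X} = +^{(i)}_{T^{n-1}X} \circ (\sigma_X \times \sigma_X)\,,
  \qquad
  \sigma_X \circ \kappa^{(j)}_{T^{n-1}X} = \kappa^{(i)}_{T^{n-1}X} \circ (\id_R \times \sigma_X)\,,
\end{equation*}
where $j = \sigma^{-1}(i)$. Here $\sigma_X \times \sigma_X$ denotes the induced map on the fiber product $(T^{j-1}T_2T^{n-j})X \to (T^{i-1}T_2T^{n-i})X$; this is well defined because $\sigma_X$ maps the projection at position $j$ to the projection at position $i$. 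The fiber products are preserved by $T$, so this is a diagram of bundles.

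Granting these relations, the first claim is immediate. Compose them with $\omega$ and use \eqref{eq:CubForm1}, \eqref{eq:CubForm2} at position $i$:
\begin{equation*}
  \omega\circ\sigma_X\circ +^{(j)} = \Radd\circ\bigl((\omega\circ\sigma_X)\times(\omega\circ\sigma_X)\bigr)\,,
\end{equation*}
and similarly for $\kappa$. The second claim follows by taking $i=1$ and noting that $\omega\circ\sigma_X = \pm\omega$. The sign is harmless, because both \eqref{eq:CubForm1} and \eqref{eq:CubForm2} are linear in $\omega$: negation commutes with $\Radd$ and with $\Rmult(\id_R \times -)$ by the ring axioms. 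Choosing $\sigma$ with $\sigma^{-1}(1)=j$ then gives linearity at every position $j$.

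To prove the intertwining relations, I will reduce to adjacent transpositions $\tau_{k,k+1}$. Since these generate $S_n$, the set of $\sigma$ satisfying the relations is closed under composition, and the positions compose correctly because $(\sigma\rho)^{-1}(i) = \rho^{-1}(\sigma^{-1}(i))$.

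For a single transposition, the case analysis reads as follows:
\begin{itemize}
\item If $k,k+1 \notin \{i-1,i\}$ relative to the position, then naturality of $+$, $\kappa$ and $\tau$ handles it, using $\tau_{k,k+1}=T^{k-1}\tau T^{n-k-1}$ and the fact that $+^{(i)}$ is $T^{i-1}$ applied to something at the outer functor layers, by \eqref{eq:AddPosRecurs} and \eqref{eq:ScalMultPosRecurs}.
\item If the transposition hits the position, one uses the recursion to reduce to $i\in\{1,2\}$ with $\tau$ itself. There the relation is exactly Diagram~\eqref{eq:TanFun2} ($\tau$ is additive between $+T$ and $T+$) and Diagram~\eqref{diag:ScalarMult3} (relating $\kappa T$ and $T_{(2)}\kappa$), together with $\tau^2=\id$.
\end{itemize}

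An alternative that avoids the case split is to use the definitions \eqref{eq:TaddAti}, \eqref{eq:RmultAti} directly. These express $+^{(i)}$ as a conjugate of $+^{(1)}$ by $\Swp_{1,i-1}$. The relation for general $\sigma$ then reduces to showing that $\Swp_{j-1,1}\circ\sigma^{-1}\circ\Swp_{1,i-1}$ fixes $1$, so it lies in $1\times S_{n-1}$. Such permutations act as $T$ applied to a permutation of $T^{n-1}$, and they commute with $+_{T^{n-1}X}$ and $\kappa_{T^{n-1}X}$ by naturality in $X$.

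I would use this second route: it is cleaner and needs only the group-homomorphism property $S_n\to\Aut(T^n)$ and naturality. The main obstacle is checking carefully that the fiber-product maps match, i.e.\ that $(T\rho)\times(T\rho)$ is the correct map $T_2T^{n-1}X\to T_2T^{n-1}X$, and that $\kappa$ at position $1$ is natural in the object $T^{n-1}X$ under $T\rho$.
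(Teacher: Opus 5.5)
The paper states Lemma~\ref{lem:AtiPermute} without proof, so there is no argument of the paper to compare against. Your second route is a complete and correct proof.

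With $j=\sigma^{-1}(i)$, the permutation $\rho=\Swp_{i-1,1}\circ\sigma\circ\Swp_{1,j-1}$ (the inverse of yours) fixes $1$. The stabilizer of $1$ is generated by the $\tau_{k,k+1}$ with $k\geq 2$, and each of these acts on $T^n$ as $T(T^{k-2}\tau T^{n-k-1})$. The homomorphism $S_n\to\Aut(T^n)$ therefore gives $\rho_X=T(\rho'_X)$ with $\rho'\in S_{n-1}$, and hence $\rho_X\times\rho_X=T_2(\rho'_X)$. Naturality of $+$ and $\kappa$ in the object $T^{n-1}X$ then turns \eqref{eq:TaddAti} and \eqref{eq:RmultAti} directly into your intertwining relations.

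Your convention that $\sigma_X$ carries position $j$ to position $\sigma(j)$ is the right one; it is forced by $\Swp_{1,i-1}(1)=i$ in the definitions. Writing $\sigma=\Swp_{1,i-1}\circ\rho\circ\Swp_{j-1,1}$ also settles the well-definedness of $\sigma_X\times\sigma_X$ on the fiber products. It exhibits this map as a composite of $T_2(\rho'_X)$ with the maps the paper already uses in \eqref{eq:TaddAti}.

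Compare this with the generator-by-generator argument suggested by the paper's recursion lemma, which is your first route. Your second route shows the statement holds by construction: it uses neither \eqref{eq:TanFun2} nor \eqref{diag:ScalarMult3}. Those axioms are needed only to identify $+^{(i)}$ and $\kappa^{(i)}$ with $T^{i-1}{+}$ and $T_{(2)}^{i-1}\kappa$ via \eqref{eq:AddPosRecurs} and \eqref{eq:ScalMultPosRecurs}.

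If you kept the first route, two corrections are needed:
\begin{itemize}
\item The case condition should read $j\notin\{k,k+1\}$.
\item For $\kappa$ with $j>k+1$, naturality of $\tau$ alone does not suffice. Moving $\tau$ past the iterated partial tangent also uses $\tau_{R\times Y}=\tau_R\times\tau_Y$ and $\tau\circ T0=0T$.
\end{itemize}

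Your deduction of the second claim is fine, including the invariance of both linearity conditions under $\omega\mapsto-\omega$.
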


\begin{Definition}
\label{def:CubeForms1}
An antisymmetric morphism $\omega\colon T^n X \to R$ that is $R$-linear at the $i$-th position for all $1 \leq i \leq n$ will be called a \textbf{cubical $n$-form} on $X$.
\end{Definition}

The notion of cubical forms is not sufficient for the construction of a Cartan calculus. In addition, we have to require the condition
\begin{equation}
\label{eq:CubDiffDef}
  \Val_R \circ T\omega \circ \lambda_{T^{n-1}X} =  \omega
\end{equation}
for forms of degree $n \geq 1$. The condition states that the tangent map of $\omega$ in the vertical fiber direction is the form itself. In categories where the tangent functor is defined as linear approximation, such as manifolds of any kind, this condition holds since the linear approximation of a linear map is the map itself. In general tangent categories, \eqref{eq:CubDiffDef} must be promoted to an axiom.

\begin{Lemma}
\label{lem:DiffImpliesLin}
If $\omega: T^n X \to R$ satisfies~\eqref{eq:CubDiffDef}, then $\omega$ is $R$-linear at the first position.
\end{Lemma}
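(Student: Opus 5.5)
Write $Y \coloneqq T^{n-1}X$, so that $\omega\colon TY \to R$. Position $1$ is the outermost $T$, so $+^{(1)} = +_Y$ and $\kappa^{(1)} = \kappa_Y$. The plan is to use~\eqref{eq:CubDiffDef} to rewrite $\omega$ as $\Val_R \circ T\omega \circ \lambda_Y$. We then push $+_Y$ and $\kappa_Y$ through $\lambda_Y$ using the tangent-structure axioms, and through $T\omega$ using naturality. Finally we push them through $\Val_R$ using the properties of the tangent-stable ring $R$. The point is that $\lambda$ turns the base-point-dependent operations on $TY$ into operations on the outer $T$ of $T^2Y$, where naturality of $+$ and $\kappa$ applies to the arbitrary morphism $\omega$.

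\emph{Additivity.} By~\eqref{eq:TanFun3add}, $\lambda_Y \circ +_Y = +_{TY} \circ (\lambda_Y \times_{0} \lambda_Y)$. Naturality of $+$ gives $T\omega \circ +_{TY} = +_R \circ (T\omega \times_{\omega} T\omega)$. The first diagram of~\eqref{eq:DiffObj1} then gives $\Val_R \circ +_R = \Radd \circ (\Val_R \times \Val_R)$. Composing, we get
\begin{equation*}
  \omega \circ +_Y = \Val_R\circ T\omega\circ\lambda_Y\circ +_Y = \Radd\circ\bigl(\Val_R T\omega\lambda_Y \times \Val_R T\omega\lambda_Y\bigr) = \Radd \circ (\omega\times\omega),
\end{equation*}
which is~\eqref{eq:CubForm1} for $i=1$. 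We need to check that the fiber products match, i.e.\ that $\lambda_Y\times_0\lambda_Y$ lands in $T^2Y\times_{TY}T^2Y$ over $\pi_{TY}$. This follows from the first square of~\eqref{eq:TanFun3}.

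\emph{Homogeneity.} Diagram~\eqref{diag:ScalarMult1} gives $\lambda_Y\circ\kappa_Y = \kappa_{TY}\circ(\id_R\times\lambda_Y)$. Naturality of $\kappa$ in the object gives $T\omega\circ\kappa_{TY} = \kappa_R\circ(\id_R\times T\omega)$. It remains to show $\Val_R\circ\kappa_R = \Rmult\circ(\id_R\times\Val_R)$, where $\kappa_R$ is the scalar multiplication on $TR$. With this, composing yields $\omega\circ\kappa_Y = \Rmult\circ(\id_R\times\omega)$, which is~\eqref{eq:CubForm2} for $i=1$.

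The last identity is the main obstacle, since it concerns the abstract $\kappa_R$ rather than $T\Rmult$. I would try two routes. The first is to use that the trivialization $TR\cong R\times R$ is an isomorphism of bundles of $R$-modules over $R$, where $R$ carries the module structure $\Rmult$ (Definition~\ref{def:ModTangentStable} and the proposition following it). Under this isomorphism, $\kappa_R$ corresponds to $(r,(x,a))\mapsto(x,\Rmult(r,a))$, and projecting to the fiber gives the claim. If the identification of $R$-module structures is not transparent enough, the fallback is to compare $\kappa_R$ with $\Tpart{2}\Rmult$ via Proposition~\ref{prop:RmodLinearization}, i.e.\ \eqref{eq:LeibnizRuleR1}, which states $\Val_R\circ\Tpart{2}\Rmult = \Rmult\circ(\id_R\times\Val_R)$. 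For this I would use naturality of $\kappa$ along $\Rmult(r,-)\colon R\to R$ together with $\kappa$ on $T(R)$ restricted to vertical vectors. The first route seems cleaner, and I would adopt it.
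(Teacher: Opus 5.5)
Your proposal is correct and follows the paper's proof essentially step by step. You rewrite $\omega$ via \eqref{eq:CubDiffDef}, move $+$ and $\kappa$ past $\lambda$ using \eqref{eq:TanFun3add} and \eqref{diag:ScalarMult1}, past $T\omega$ by naturality, and past $\Val_R$ using the left diagram of \eqref{eq:DiffObj1} and the $R$-linearity of $\Val_R$. For that last step, the one you flagged as the main obstacle, your first route is exactly the paper's justification: the trivialization $TR \cong R \times R$ is an isomorphism of bundles of $R$-modules.
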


\begin{proof}
We have
\begin{equation*}
\begin{split}
  \omega \circ +_{T^{n-1} X}
  &=
  \Val_R \circ T\omega \circ \lambda_{T^{n-1}X} \circ +_{T^{n-1} X}
  \\
  &=
  \Val_R \circ T\omega \circ
  +_{T^n X} \circ (\lambda_{T^{n-1}X} \times \lambda_{T^{n-1}X})
  \\
  &=
  \Val_R \circ +_R \circ (T\omega \times T\omega) \circ
  (\lambda_{T^{n-1}X} \times \lambda_{T^{n-1}X})
  \\
  &=
  \Radd \circ (\Val_R \times \Val_R) \circ
  (T\omega \times T\omega) \circ
  (\lambda_{T^{n-1}X} \times \lambda_{T^{n-1}X})
  \\
  &=
  \Radd \circ (\omega \times \omega)
  \,,
\end{split}
\end{equation*}
where we have used the assumption~\eqref{eq:CubDiffDef}, the additivity~\eqref{eq:TanFun3add} of the vertical lift, that tangent morphisms are additive, the left diagram of~\eqref{eq:DiffObj1} for $A = R$, and again the assumption~\eqref{eq:CubDiffDef}. Similarly, we compute
\begin{equation*}
\begin{split}
  \omega \circ \kappa_{T^{n-1} X}
  &=
  \Val_R \circ T\omega \circ \lambda_{T^{n-1}X} \circ
  \kappa_{T^{n-1} X}
  \\
  &=
  \Val_R \circ T\omega \circ
  \kappa_{T^n X} \circ (\id_R \times \lambda_{T^{n-1}X})
  \\
  &=
  \Val_R \circ \kappa_R \circ (\id_R \times T\omega) \circ
  (\id_R \times \lambda_{T^{n-1}X})
  \\
  &=
  \Rmult \circ (\id_R \times \Val_R) \circ (\id_R \times T\omega) \circ
  (\id_R \times \lambda_{T^{n-1}X})
  \\
  &=
  \Rmult \circ (\id_R \times \omega)
  \,,
\end{split}
\end{equation*}
where we have used the assumption~\eqref{eq:CubDiffDef}, Diagram~\eqref{diag:ScalarMult1} for the object $T^{n-1}X$, the naturality of $\kappa$, the $R$-linearity of $\Val_R$ (which holds because the trivialization $TR \cong R \times R$ is an isomorphism of bundles of $R$-modules), and again the assumption~\eqref{eq:CubDiffDef}.
\end{proof}

\begin{Corollary}
An antisymmetric morphism $\omega: T^n X \to R$ that satisfies~\eqref{eq:CubDiffDef} is a cubical $n$-form.
\end{Corollary}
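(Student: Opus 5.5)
The Corollary follows by chaining Lemma~\ref{lem:DiffImpliesLin} with the second part of Lemma~\ref{lem:AtiPermute}. By Definition~\ref{def:CubeForms1}, a cubical $n$-form is an antisymmetric morphism $T^nX \to R$ that is $R$-linear at the $i$-th position for every $1 \leq i \leq n$. Antisymmetry is assumed, so only linearity at every position needs to be checked.

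First, Lemma~\ref{lem:DiffImpliesLin} applies because $\omega$ satisfies~\eqref{eq:CubDiffDef}. It gives that $\omega$ is $R$-linear at the first position, so both~\eqref{eq:CubForm1} and~\eqref{eq:CubForm2} hold for $i=1$.

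Second, we invoke the last sentence of Lemma~\ref{lem:AtiPermute}: an antisymmetric morphism that is $R$-linear at the first position is $R$-linear at every position. If one prefers to see why, take any $i$ and choose $\sigma \in S_n$ with $\sigma^{-1}(1) = i$. By the first part of Lemma~\ref{lem:AtiPermute}, $\omega \circ \sigma_X$ is $R$-linear at position $i$. By antisymmetry, $\omega \circ \sigma_X = (-1)^{|\sigma|}\omega$. Multiplying a morphism by the sign $\pm 1$ preserves both~\eqref{eq:CubForm1} and~\eqref{eq:CubForm2}, since $\Rmult$ is bilinear and $\Radd$ commutes with negation. Hence $\omega$ itself is $R$-linear at position $i$.

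The two steps together show that $\omega$ is a cubical $n$-form. There is no real obstacle here; the only point needing care is the sign argument above, which uses only the ring axioms of $R$. For $n = 0$ the condition on positions is vacuous, so the statement holds trivially.
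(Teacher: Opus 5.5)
Your proposal is correct and follows the paper's proof, which simply cites Lemma~\ref{lem:DiffImpliesLin} for linearity at the first position and Lemma~\ref{lem:AtiPermute} to pass to every position. Your additional sign argument, which derives the second part of Lemma~\ref{lem:AtiPermute} from its first part, is also sound.
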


\begin{proof}
This follows from Lemma~\ref{lem:DiffImpliesLin} and Lemma~\ref{lem:AtiPermute}.
\end{proof}

This corollary justifies the following terminology.

\begin{Definition}
\label{def:CubeForms}
An antisymmetric morphism $\omega\colon T^n X \to R$ that satisfies~\eqref{eq:CubDiffDef} will be called a \textbf{differential cubical $n$-form}. The space of all differential cubical $n$-forms on $X$ will be denoted by $\Omega^n_\mathrm{cub}(X)$. By definition, $\Omega^0_\mathrm{cub}(X) = \Hom(X,R)$.
\end{Definition}


\begin{Example}
On a smooth manifold there is a natural bijection between differential cubical forms and de Rham forms (Section~\ref{sec:SmoothMflds}).
\end{Example}

\subsection{The main theorem}

We can define the multiplication of differential cubical forms as follows. Consider the natural transformations
\begin{align*}
  T^p \pi^q\colon T^{p+q} &\longrightarrow T^p
  \\
  \pi^p T^q\colon T^{p+q} &\longrightarrow T^q
  \,,
\end{align*}
which evaluate at $X$ to
\begin{align*}
  (T^p \pi^q)_X
  &\coloneqq T^p \pi_{X} \circ T^p \pi_{TX} \circ \ldots \circ
  T^p \pi_{T^{q-1}X}
  \\
  (\pi^p T^q)_X
  &\coloneqq
  \pi_{T^{q} X} \circ \pi_{T^{1+q}X} \circ \ldots \circ
  \pi_{T^{p-1+q}X}
  \,.
\end{align*}
For $\alpha \in \Omega^p_\mathrm{cub}(X)$ and $\beta \in \Omega^q_\mathrm{cub}(X)$ we define a morphism
\begin{equation}
\label{eq:FormProd}
  \alpha \cdot \beta
  \coloneqq \Rmult\circ\bigl(\alpha\circ (T^p\pi^q)_X,\beta\circ(\pi^pT^q)_X\bigr)
  \colon T^{p+q} X \longrightarrow R
  \,.
\end{equation}

\begin{Definition}
\label{def:WedgeDiffInnder}
The \textbf{wedge product} of $\alpha \in \Omega^p_\mathrm{cub}(X)$ and $\beta \in \Omega^q_\mathrm{cub}(X)$ is
\begin{align}
\label{eq:CubeWedge}
  \alpha \wedge \beta
  &\coloneqq
  \sum_{\mathclap{\sigma \in \Shuff(p,q)^{-1}}}
  (-1)^{|\sigma|}
  (\alpha \cdot \beta) \circ \sigma_X
  \,.
  \\
\intertext{The \textbf{differential} of $\omega \in \Omega^n_\mathrm{cub}(X)$ is}
\label{eq:dCubeDef}
  d\omega
  &\coloneqq
  \sum_{\mathclap{\sigma \in \Shuff(1,n)^{-1} }}
  (-1)^{|\sigma|}\Val_R \circ T\omega \circ \sigma_X
  \,.
  \\
\intertext{The \textbf{inner derivative} of $\omega$ with respect to a vector field $v \in \calX(X)$ is}
\label{eq:CubeInnDer}
  \iota_v \omega
  &\coloneqq
  (-1)^{n+1} \omega \circ T^{n-1}v
  \,.
\end{align}
\end{Definition}

\begin{Definition}[Def.~6.5 in \cite{AintablianBlohmann:2026}]
\label{def:CartanCalc}
An abstract \textbf{Cartan calculus} consists of a commutative differential graded algebra $(\Omega, d)$, an $\Omega^0$-module $\calX$ equipped with a Lie bracket, and for every $v \in \calX$ a graded derivation $\iota_v$ of degree $-1$, such that
\begin{equation}
\label{eq:InnDerOlinear}
  \iota_{v+w} = \iota_v + \iota_w
  \,,\quad
  \iota_{fv}\omega = f \wedge \iota_v\omega
\end{equation}
and, setting $\Lie_v \coloneqq [\iota_v, d]$, the bracket relations
\begin{equation}
\label{eq:CartanCalcBrackets}
  [\iota_v, \iota_w] = 0 \,,\quad
  [\Lie_v, \iota_w] = \iota_{[v,w]} \,,\quad
  [\Lie_v, \Lie_w] = \Lie_{[v,w]} \,,\quad
  [\Lie_v, d] = 0
\end{equation}
hold for all $v, w \in \calX$, $f \in \Omega^0$, and $\omega \in \Omega$.
\end{Definition}

\begin{Remark}
\label{rmk:CartanRedund}
The bracket relations~\eqref{eq:CartanCalcBrackets} are redundant. It is enough to assume that $[\iota_v, \iota_w] = 0$ and $[\Lie_v, \iota_w] = \iota_{[v,w]}$. The remaining two relations follow from the graded Jacobi identity and $[d,d] = 2d^2 = 0$ (see the remarks after Definition~6.5 in \cite{AintablianBlohmann:2026}).
\end{Remark}

\begin{Theorem}
\label{thm:CartanCube}
Let $X$ be an object in a cartesian tangent category with scalar $R$-multiplication.
\begin{itemize}

\item[(i)] The collection of differential cubical forms $\Omega^n_\mathrm{cub}(X)$, $n \geq 0$, with the wedge product~\eqref{eq:CubeWedge} and the differential~\eqref{eq:dCubeDef} is a commutative differential graded $R$-algebra.

\item[(ii)] If $R$ has no $2$-torsion, then the differential graded algebra $(\Omega_\mathrm{cub}(X), d)$ together with the Lie algebra of vector fields $\calX(X)$ and the inner derivative~\eqref{eq:CubeInnDer} constitutes a Cartan calculus (Definition~\ref{def:CartanCalc}).

\end{itemize}
\end{Theorem}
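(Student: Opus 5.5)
The proof splits along the two parts of the statement. For (i) I would first check that the three operations land in differential cubical forms, and then verify the algebraic identities.

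\emph{Wedge product.} Antisymmetry of $\alpha\wedge\beta$ is the usual shuffle-coset argument. One uses $\alpha\cdot\beta\circ(\rho\times\rho')_X=(-1)^{|\rho|+|\rho'|}\alpha\cdot\beta$ for $\rho\in S_p$, $\rho'\in S_q$. This follows from the naturality of $T^p\pi^q$ and $\pi^pT^q$ with respect to permutations acting inside the blocks.

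For condition~\eqref{eq:CubDiffDef} I split the shuffle sum according to whether $\sigma^{-1}$ sends position $1$ into the $\alpha$-block or the $\beta$-block. In each case I compute $\Val_R\circ T(\alpha\cdot\beta)\circ\lambda$ with the Leibniz rule of Proposition~\ref{prop:LeibnizRuleR}. The term in which the lift hits the block not containing the first slot vanishes, because $\lambda$ lies over the zero section by~\eqref{eq:TanFun3} and $\Val_R\circ T\gamma\circ 0=\Rzero$. The surviving term reduces to \eqref{eq:CubDiffDef} for $\alpha$ or $\beta$, after commuting $\lambda$ past the permutations with~\eqref{eq:TanFun5}.

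Associativity and graded commutativity then follow from \eqref{eq:ShuffSwp}, the sign $|\Swp_{p,q}|=pq$, and commutativity of $\Rmult$.

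\emph{Differential.} Antisymmetry of $d\omega$ uses \eqref{eq:TEtaEtaTtau}, $T\Val_R=\Val_{TR}\circ\tau_R$, to move $\tau$ through $T\omega$; the unshuffles combine with the internal antisymmetry of $\omega$. Condition \eqref{eq:CubDiffDef} for $d\omega$ comes from \eqref{eq:EtaTEtalambda} together with naturality of $\lambda$.

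For $d^2=0$, the expression $d d\omega$ is a signed sum of $\Val_R\circ T\Val_R\circ T^2\omega\circ\sigma_X$ over $\sigma\in S_{n+2}$ modulo $S_n$. By \eqref{eq:EtaTEtaSymmetric01}, $\Val_R\circ T\Val_R$ is invariant under $\tau$, so the terms cancel in pairs.

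The Leibniz rule $d(\alpha\wedge\beta)=d\alpha\wedge\beta+(-1)^p\alpha\wedge d\beta$ follows from Proposition~\ref{prop:LeibnizRuleR} applied to $T(\alpha\cdot\beta)$. Here one writes $T$ of $T^p\pi^q$ via $\tau$-conjugation and regroups the shuffles as $\Shuff(1,p+q)^{-1}\circ\Shuff(p,q)^{-1}$ $=$ $(\Shuff(1,p)^{-1}\times\id)\Shuff(p+1,q)^{-1}\cup\dots$.

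$R$-linearity of all operations is Lemma~\ref{lem:DiffImpliesLin} plus Proposition~\ref{prop:RmodLinearization}.

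For (ii) I would first show that $\iota_v\omega$ is again a differential cubical form. Antisymmetry follows from naturality of $\tau$ applied to $T^{n-1}v$. Condition \eqref{eq:CubDiffDef} follows from naturality of $\lambda$: $\lambda_{T^{n-2}X}$ commutes with $T^{n-2}v$ up to applying $T^{n-1}v$.

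The relations needed are then:
\begin{itemize}
\item additivity of $\iota_v$ in $v$, which is $R$-linearity at the last position;
\item $\iota_{fv}=f\iota_v$, which is \eqref{eq:CubForm2} at the $n$-th position using \eqref{eq:ScalMultPosRecurs};
\item the graded derivation property, which comes from splitting shuffles by where the last slot goes.
\end{itemize}
Note that $\iota_v\iota_v\omega=\pm\omega\circ T^{n-1}v\circ T^{n-2}v$, and $T^{n-2}(Tv\circ v)$ is fixed by $\tau$ up to $\Tpart{}$-data. The cleaner approach is $\iota_v\iota_w+\iota_w\iota_v=0$ from antisymmetry, together with $\tau\circ Tv\circ w$ versus $Tw\circ v$. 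This is not literally equal, so I would instead use $2\iota_v\iota_v=0$ via the $\tau$-twist and divide by $2$, which is where the hypothesis that $R$ has no $2$-torsion enters.

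By Remark~\ref{rmk:CartanRedund}, it then remains to prove $[\Lie_v,\iota_w]=\iota_{[v,w]}$. This is the main obstacle. My plan is to express $\Lie_v\omega$ directly as $\Val_R\circ T\omega\circ(\text{flow-type lift of }v)$ through a Cartan-formula computation. Concretely, I expand $\iota_v d\omega$ and $d\iota_v\omega$, where most unshuffle terms cancel and leave
\begin{equation*}
\Lie_v\omega=\Val_R\circ T\omega\circ\tau\text{-moved }T^nv-\sum_i\omega\circ(\text{$i$-th slot derivative of }v).
\end{equation*}
Then I commute this with $\iota_w$. The discrepancy is $\omega$ evaluated on $Tw\circ v-\tau\circ Tv\circ w=\delta(v,w)$ in the last slot. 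Using \eqref{eq:deltaBracketRel}, $\delta(v,w)=\lambda_2\circ(w,[v,w])$, and then \eqref{eq:EtaTEtalambda2} together with \eqref{eq:CubDiffDef}, this yields exactly $\omega\circ T^{n-2}[v,w]$, which is $\iota_{[v,w]}\omega$ up to sign. Difference terms are handled with Corollary~\ref{cor:AaddTminus}. Keeping track of the signs and the positional naturality through $T^{n-1}$ is the lengthy part.
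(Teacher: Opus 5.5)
Your outline of part (i) follows the paper's argument: shuffle cosets, Proposition~\ref{prop:LeibnizRuleR} split according to where the first slot goes, and regrouping of unshuffles for the Leibniz rule and $d^2=0$. One omission: \eqref{eq:CubDiffDef} for $d\omega$ also needs the second diagram of \eqref{eq:TanFun5} and \eqref{eq:EtaTEtaSymmetric01} for the unshuffles $\sigma\neq\id$. Part (ii) has genuine gaps.

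\textbf{The identity $\iota_{fv}=f\wedge\iota_v$.} You claim that $\iota_{fv}\omega=f\wedge\iota_v\omega$ is just \eqref{eq:CubForm2} at the $n$-th position; this is false. By \eqref{eq:ScalMultPosRecurs}, $\kappa^{(n)}_{T^{n-1}X}$ is an iterated partial tangent map $\Tpart{2}$ of $\kappa_X$, so the scalar enters only through zero sections of $T^{n-1}R$. By contrast, $T^{n-1}(fv)=T^{n-1}\kappa_X\circ(T^{n-1}f,T^{n-1}v)$ contains all derivatives of $f$. Already for $n=2$, Proposition~\ref{prop:TSumPartials} with \eqref{diag:ScalarMult2} and \eqref{diag:ScalarMult3} gives $T(fv)=+_{TX}\circ\bigl((\lambda_2)_X\circ(fv\circ\pi_X,(df)(v\circ\pi_X)),\,\tau_X\circ f'v'\bigr)$, where $f'=f\circ\pi_X$ and $v'=\tau_X\circ Tv$. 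Hence $\iota_{fv}\omega=f\wedge\iota_v\omega-\omega\circ(\lambda_2)_X\circ\bigl(fv\circ\pi_X,(df)(v\circ\pi_X)\bigr)$.

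The extra term vanishes only because an antisymmetric form that is linear at every position kills $\lambda$ and $\lambda_2$ in adjacent slots. Indeed $\omega\circ\lambda_X=\omega\circ\tau_X\circ\lambda_X=-\omega\circ\lambda_X$, so $2\,\omega\circ\lambda_X=0$. This is Lemma~\ref{lem:FormLambdaZero}, and it is where the absence of $2$-torsion is really used. The paper then covers all degrees by treating an $(n+1)$-form on $X$ as an $n$-form on $TX$, with $v'$ as the prolongation of $v$ (Lemma~\ref{lem:fPrimevPrime}), and inducting.

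Without the hypothesis the identity genuinely fails. Take $k=\mathbb{F}_2$ and the morphism $T^2\Spec(k[t])\to\bbA_k$ given by $x\mapsto d_{12}t$, in the notation of Lemma~\ref{lem:Schemes1}. It is a differential cubical $2$-form, yet $\iota_{\partial_t}\omega=0$ while $\iota_{t\partial_t}\omega$ is the $1$-form $dt$.

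\textbf{The relation $[\iota_v,\iota_w]=0$.} Your treatment lacks the same lemma. The defect you notice between $Tw\circ v$ and $\tau_X\circ Tv\circ w$ is exactly $\delta(v,w)=(\lambda_2)_X\circ(w,[v,w])$ by \eqref{eq:deltaBracketRel}. Antisymmetry in the last two slots and linearity at position $n-1$ then give $(\iota_v\iota_w+\iota_w\iota_v)\omega=-\,\omega\circ T^{n-2}(\lambda_2)_X\circ(T^{n-2}w,T^{n-2}[v,w])$, which vanishes by that lemma. The detour through $2\iota_v\iota_v=0$ does not avoid this. The map $\tau_X\circ Tv\circ v$ differs from $Tv\circ v$ by the analogous term $\delta(v,v)=(\lambda_2)_X\circ(v,[v,v])$, and you give no argument that it is killed.

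\textbf{The relation $[\Lie_v,\iota_w]=\iota_{[v,w]}$.} The cancellation in $\iota_vd\omega+d\iota_v\omega$ is complete, so $\Lie_v\omega=\Val_R\circ T\omega\circ(\Swp_{n,1})_X\circ T^nv$ with no correction sum. Using $\Swp_{n,1}=(\Swp_{n-1,1}T)\circ T^{n-1}\tau$, one gets $[\Lie_v,\iota_w]\omega=(-1)^{n+1}\Val_R\circ T\omega\circ(\Swp_{n-1,1}T)_X\circ T^{n-1}\delta(v,w)$. The real work is then to reduce $T^{n-1}(\lambda_2)_X$ inside this expression:
\begin{itemize}
\item drop the zero-section part using additivity of $\omega$;
\item move the vertical lift to the two outermost slots with the second diagram of \eqref{eq:TanFun5} and antisymmetry;
\item only then apply \eqref{eq:CubDiffDef}.
\end{itemize}
The identity \eqref{eq:EtaTEtalambda2} you cite concerns $\Val_R\circ T\Val_R$ and does not accomplish this. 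Your direct route is viable once this step is supplied, but it is the actual proof, not bookkeeping. The paper does this computation by hand only in degree one. It reaches higher degrees by induction via prolongation, using $\Lie_{v'}\omega'=\Lie_v\omega$, $\iota_{w'}\omega'=(\iota_w\omega)'$, and Lemma~\ref{lem:ProlongBracket}, $[v',w']=[v,w]'$.
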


\section{Proof of Theorem~\ref{thm:CartanCube}}
\label{sec:Proof}

The proof of Theorem~\ref{thm:CartanCube} is long and involved. We will break it up into a number of more manageable steps. We begin with the proof of Statement~(i) of Theorem~\ref{thm:CartanCube}.

\subsection{The ring structure}
\label{sec:ProofRing}

This section is devoted to the proof that the wedge product equips $\Omega_\mathrm{cub}(X)$ with the structure of a commutative graded ring. First, we need a technical lemma about shuffles. Let $\sigma \in \Shuff(p,q)^{-1}$ and $\rho \in S_{p+q}$. Since $\Shuff(p,q)^{-1}$ is a set of representatives of the right cosets $(S_p \times S_q)\backslash S_{p+q}$, there are unique $\sigma_\rho \in \Shuff(p,q)^{-1}$, $\mu_\rho \in S_p$, and $\nu_\rho \in S_q$ such that
\begin{equation}
\label{eq:SnShuffAct}
  \sigma \circ \rho = (\mu_\rho, \nu_\rho) \circ \sigma_\rho
  \,.
\end{equation}
This shows that there is a unique map
\begin{equation}
\label{eq:ShuffIntertwine}
\begin{aligned}
  \Shuff(p,q)^{-1} \times S_{p+q}
  &\longrightarrow
  (S_p \times S_q) \times \Shuff(p,q)^{-1}
  \\
  (\sigma, \rho)
  &\longmapsto
  \bigl( (\mu_\rho, \nu_\rho), \sigma_\rho \bigr)
\end{aligned}
\end{equation}
such that Equation~\eqref{eq:SnShuffAct} holds. The following lemma is elementary:

\begin{Lemma}
\label{lem:ShuffShuff}
The analogue of the map~\eqref{eq:ShuffIntertwine} for the subgroup $S_{p+1} \times S_q \subset S_{p+q+1}$ induces a bijection
\begin{align*}
  \Shuff(p,q)^{-1} \times \Shuff(1,p+q)^{-1}
  &\xrightarrow{~\cong~}
  \Shuff(1,p)^{-1} \times \Shuff(p+1,q)^{-1}
  \\
  (\sigma, \rho) &\longmapsto (\mu_\rho, \sigma_\rho)
\end{align*}
such that for the action of $S_{p+q+1}$ on $T^{p+q+1}$ we have
\begin{equation}
\label{eq:ShuffShuff01}
  T\sigma \circ \rho
  = \mu_\rho T^q \circ \sigma_\rho
  \,.
\end{equation}
\end{Lemma}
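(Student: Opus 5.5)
The plan is to separate the statement into a purely combinatorial assertion about permutations and a formal consequence for the action on $T^{p+q+1}$. Throughout, I regard $S_{p+q}$ as embedded in $S_{p+q+1}$ as the permutations fixing the first letter. Under this embedding, $\sigma \in S_{p+q}$ acts on $T^{p+q+1}$ as $T\sigma$. Likewise, $(\mu,\nu) \in S_{p+1}\times S_q$ acts as $\mu T^q \circ T^{p+1}\nu$, and $\mu T^q$ is the action of $(\mu,\id)$. Since $\sigma \mapsto \sigma_X$ is a group homomorphism $S_{p+q+1} \to \Aut(T^{p+q+1}X)$ (it is induced by the symmetric structure), Equation~\eqref{eq:ShuffShuff01} will follow once I show the following: for $\sigma \in \Shuff(p,q)^{-1}$ and $\rho \in \Shuff(1,p+q)^{-1}$, the unique coset decomposition
\begin{equation*}
  \sigma \circ \rho = (\mu_\rho, \nu_\rho)\circ\sigma_\rho,
  \qquad (\mu_\rho,\nu_\rho) \in S_{p+1}\times S_q,\quad \sigma_\rho \in \Shuff(p+1,q)^{-1},
\end{equation*}
with respect to $S_{p+1}\times S_q \subset S_{p+q+1}$, has $\nu_\rho = \id$ and $\mu_\rho \in \Shuff(1,p)^{-1}$.

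To prove this, I will interpret unshuffles as reorderings of the letters $1,\dots,p+q+1$. An element $\rho \in \Shuff(1,p+q)^{-1}$ moves a single chosen letter $j$ to the front and keeps the remaining letters in their original order. Then $\sigma$, acting on the last $p+q$ slots, splits those remaining letters into a block of $p$ and a block of $q$, each kept in increasing order. The composite therefore produces the word $j, a_1<\dots<a_p, b_1<\dots<b_q$. The $(p+1,q)$-unshuffle $\sigma_\rho$ is the one producing the word with $\{j,a_1,\dots,a_p\}$ sorted increasingly, followed by $b_1<\dots<b_q$. The remaining correction only moves $j$ to the front within the first block. This correction is a $(1,p)$-unshuffle $\mu_\rho$ acting on the first $p+1$ slots, and no correction is needed in the last $q$ slots, so $\nu_\rho=\id$. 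By uniqueness of the coset decomposition, these are the $\mu_\rho,\nu_\rho,\sigma_\rho$ of the statement.

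For bijectivity, I will first compare cardinalities:
\begin{equation*}
  \tbinom{p+q}{p}(p+q+1) \;=\; (p+1)\tbinom{p+q+1}{p+1},
\end{equation*}
so both sides have the same size and injectivity suffices. Given $(\mu_\rho,\sigma_\rho)$, the word produced by $\sigma\circ\rho$ is determined. Its first letter is $j$, which determines $\rho$. The split of the remaining letters into the $a$-block and the $b$-block then determines $\sigma$.

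The main obstacle is bookkeeping rather than any conceptual difficulty. I must check that the convention for ``the permutation acting on positions'' (left versus right action, $\sigma$ versus $\sigma^{-1}$) matches the one used in the Remark defining the $S_n$-action on $F^n$ and in \eqref{eq:SnShuffAct}. I also must check that the embedding $S_{p+q}\hookrightarrow S_{p+q+1}$ on the last letters really corresponds to applying $T$. For the latter, I would verify on an adjacent transposition that $T(F^{i-1}\tau F^{n-i-1}) = F^{i}\tau F^{n-i-1}$, and then extend by the homomorphism property. If the conventions turn out to be reversed, the same argument goes through with shuffles in place of unshuffles, since the word description is symmetric under inversion.
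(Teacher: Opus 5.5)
Your proof is correct and complete. The paper gives no proof of this lemma; it only says the lemma is elementary. So there is no competing argument to compare with, and yours fills that gap. All of your steps check out:
\begin{itemize}
\item the reduction to a group identity in $S_{p+q+1}$, using the homomorphism $\sigma\mapsto\sigma_X$ and the identifications of $T\sigma$ and $\mu T^q$;
\item the word description showing $\nu_\rho=\id$ and $\mu_\rho\in\Shuff(1,p)^{-1}$;
\item injectivity together with the count $\tbinom{p+q}{p}(p+q+1)=(p+1)\tbinom{p+q+1}{p+1}$.
\end{itemize}

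Three small remarks.

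First, the convention you flag as the main obstacle is already fixed by the paper. The Remark following Definition~\ref{def:SymStruc} makes $\sigma\mapsto\sigma_X$ a homomorphism with $\tau_{i,i+1}\mapsto T^{i-1}\tau T^{n-i-1}$. From this, $T(\tau_{i,i+1})=\tau_{i+1,i+2}$ and $\tau_{i,i+1}T^q=\tau_{i,i+1}$ follow at once. With these conventions your computation is exactly right for unshuffles, so the fallback to shuffles is not needed. The check is not vacuous, though: under these conventions the statement fails for shuffles. Take $p=1$, $q=2$, the shuffle $\sigma=\Swp_{1,2}\in\Shuff(1,2)\setminus\Shuff(1,2)^{-1}$, and $\rho=\id$. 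Then the coset decomposition has $\nu_\rho\neq\id$.

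Second, $T\sigma$ itself lies in $\Shuff(p+1,q)^{-1}$, because $\sigma^{-1}(1)+1\geq 2$. This is what makes the paper's phrase ``the analogue of the map \eqref{eq:ShuffIntertwine}'' literally applicable to the pair $(T\sigma,\rho)$. Your formulation via the coset decomposition of the product $T\sigma\circ\rho$ gives the same output.

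Third, your group identity also yields the sign relation $(-1)^{|\sigma|+|\rho|}=(-1)^{|\mu_\rho|+|\sigma_\rho|}$. The paper uses this relation whenever it applies the lemma.
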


Now we will show that $\alpha \wedge \beta$ is antisymmetric for $\alpha \in \Omega^p_\mathrm{cub}(X)$ and $\beta \in \Omega^q_\mathrm{cub}(X)$. Let $\sigma \in \Shuff(p,q)^{-1}$ and $\rho \in S_{p+q}$. With Equation~\eqref{eq:SnShuffAct} we obtain
\begin{equation*}
\begin{split}
  (\alpha \cdot \beta) \circ \sigma_X \circ \rho_X
  &=
  \Rmult\circ\bigl(\alpha\circ (T^p\pi^q)_X,\beta\circ(\pi^pT^q)_X\bigr) \circ
  \sigma_X \circ \rho_X
  \\
  &=
  \Rmult\circ\bigl(
  \alpha \circ (\mu_\rho)_X \circ (T^p\pi^q)_X,
  \beta  \circ (\nu_\rho)_X \circ (\pi^pT^q)_X\bigr) \circ
  (\sigma_\rho)_X
  \\
  &= (-1)^{|\mu_\rho| + |\nu_\rho|}
  \Rmult\circ\bigl(
  \alpha \circ (T^p\pi^q)_X,
  \beta  \circ (\pi^pT^q)_X\bigr) \circ
  (\sigma_\rho)_X
  \\
  &= (-1)^{|\mu_\rho| + |\nu_\rho|}
  (\alpha \cdot \beta) \circ (\sigma_\rho)_X
  \,,
\end{split}
\end{equation*}
where we have used Equations~\eqref{eq:FormProd} and~\eqref{eq:SnShuffAct}, that $(T^p\pi^q)\circ(\mu_\rho,\nu_\rho)=\mu_\rho\circ(T^p\pi^q)$ and $(\pi^pT^q)\circ(\mu_\rho,\nu_\rho)=\nu_\rho\circ(\pi^pT^q)$, and the assumption that $\alpha$ and $\beta$ are antisymmetric. Inserting this relation into~\eqref{eq:CubeWedge}, we get
\begin{equation}
\begin{split}
  (\alpha \wedge \beta) \circ \rho_X
  &=
  \sum_{\mathclap{\sigma \in \Shuff(p,q)^{-1}}}
  (-1)^{|\sigma|}
  (\alpha \cdot \beta) \circ \sigma_X \circ \rho_X
  \\
  &=
  \sum_{\mathclap{\sigma \in \Shuff(p,q)^{-1}}}
  (-1)^{|\sigma| + |\mu_\rho| + |\nu_\rho|}
  (\alpha \cdot \beta) \circ (\sigma_\rho)_X
  \\
  &=
  \sum_{\mathclap{\sigma_\rho \in \Shuff(p,q)^{-1}}}
  (-1)^{|\rho| + |\sigma_\rho|}
  (\alpha \cdot \beta) \circ (\sigma_\rho)_X
  \\
  &=
  (-1)^{|\rho|} \alpha \wedge \beta
  \,,
\end{split}
\end{equation}
where we have used that $|\sigma| + |\rho| \equiv |\sigma \circ \rho| = |(\mu_\rho,\nu_\rho)\circ \sigma_\rho| \equiv |\mu_\rho| + |\nu_\rho| + |\sigma_\rho| \pmod 2$. This shows that $\alpha \wedge \beta$ is antisymmetric.

Next, we will show that $\alpha \wedge \beta$ satisfies the defining condition~\eqref{eq:CubDiffDef} of a differential cubical form. We have
\begin{equation}
\label{eq:WedgeDiff01}
\begin{split}
  \Val_R \circ T(\alpha \wedge \beta) \circ (\lambda T^{p+q-1})_X
  &=
  \Val_R \circ T\Bigl(
    \sum_{\mathclap{\sigma \in \Shuff(p,q)^{-1}}} (-1)^{|\sigma|}
    (\alpha \cdot \beta) \circ \sigma_X \Bigr) \circ (\lambda T^{p+q-1})_X  \\
  &=
  \sum_{\mathclap{\sigma \in \Shuff(p,q)^{-1}}}
  (-1)^{|\sigma|}
  \Val_R \circ T(\alpha \cdot \beta) \circ (T\sigma)_X \circ (\lambda T^{p+q-1})_X
  \,.
\end{split}
\end{equation}
We further compute
\begin{equation}
\label{eq:TDerProd}
\begin{split}
  \Val_R \circ T(\alpha \cdot \beta)
  &=
  \Val_R \circ
  T\bigl( \Rmult\circ\bigl(\alpha\circ (T^p\pi^q)_X,\beta\circ(\pi^pT^q)_X\bigr) \bigr)
  \\
  &=
  \Val_R \circ
  T\Rmult
  \circ \bigl(
  T\alpha\circ (TT^p\pi^q)_X,
  T\beta \circ (T\pi^pT^q)_X \bigr)
  \\
  &=
  \Radd \circ \bigl( \Rmult \circ (\Val_R \times \pi_R),
  \Rmult \circ (\pi_R \times \Val_R) \bigr)
  \circ
  \\
  &\qquad{} \bigl(
  T\alpha\circ (TT^p\pi^q)_X,
  T\beta \circ (T\pi^pT^q)_X \bigr)
  \\
  &=
  \Rmult \circ \bigl(
    \Val_R \circ T \alpha\circ (TT^p\pi^q)_X,
    \pi_R \circ  T\beta \circ (T\pi^pT^q)_X \bigr)
  \\
  &\quad{} +
  \Rmult \circ \bigl(
    \pi_R \circ T\alpha \circ (TT^p\pi^q)_X,
    \Val_R \circ T\beta \circ (T\pi^pT^q)_X \bigr)
  \\
  &=
  \Rmult \circ \bigl(
    \Val_R \circ T\alpha \circ (T^{p+1}\pi^q)_X,
    \beta \circ (\pi T^q)_X \circ (T\pi^p T^q)_X \bigr)
  \\
  &\quad{} +
  \Rmult \circ \bigl(
    \alpha \circ (\pi T^p)_X \circ (TT^p\pi^q)_X,
    \Val_R \circ  T\beta \circ (T\pi^pT^q)_X \bigr)
  \\
  &=
  \Rmult \circ \bigl(
    \Val_R \circ T\alpha \circ (T^{p+1}\pi^q)_X,
    \beta \circ (\pi^{p+1} T^q)_X \bigr)
  \\
  &\quad{} +
  \Rmult \circ \bigl(
    \alpha \circ (\pi T^p\pi^q)_X,
    \Val_R \circ  T\beta \circ (T\pi^pT^q)_X \bigr)
  \\
  &=
    (\Val_R \circ T\alpha) \cdot \beta
  \\
  &\quad{}
  + \Rmult \circ \bigl(
    \alpha \circ (T^p \pi^{q+1})_X,
    \Val_R \circ T\beta \circ (\pi^p T^{q+1})_X \bigr)
    \circ (\Swp_{1,p} T^q)_X
  \\
  &=
    (\Val_R \circ T\alpha) \cdot \beta
    + \bigl(\alpha \cdot (\Val_R \circ T\beta)\bigr) \circ (\Swp_{1,p}T^q)_X
  \,,
\end{split}
\end{equation}
where $\Swp_{1,p}$ was defined in~\eqref{eq:ijShuffle}, and where we have used Equation~\eqref{eq:FormProd}, the functoriality of~$T$, Proposition~\ref{prop:LeibnizRuleR}, the naturality of~$\pi$, and finally the properties of the ring structure of~$R$. We will deal with the two summands on the right side of~\eqref{eq:TDerProd} separately.

For any unshuffle $\sigma \in \Shuff(p,q)^{-1}$ we have
\begin{equation*}
  T^p \pi^q \circ \sigma
  = \xi_\sigma^1 \xi_\sigma^2 \cdots \xi_\sigma^{p+q}
\,,
\end{equation*}
where
\begin{equation*}
  \xi_\sigma^i =
  \begin{cases}
    T &; 1 \leq \sigma(i) \leq p \\
    \pi &;  p+1 \leq \sigma(i) \leq p+q
  \end{cases}
  \,.
\end{equation*}
By horizontal composition with $T$, this yields $T^{p+1}\pi^q \circ T\sigma = T\xi^1_\sigma \xi^2_\sigma \cdots \xi^{p+q}_\sigma$. The unshuffle must map $1$ either to $1$ or to $p+1$. If $\sigma(1) = 1$, then $\xi^1_\sigma = T$. In this case
\begin{equation*}
\begin{split}
  T^{p+1} \pi^q \circ T\sigma \circ \lambda T^{p+q-1}
  &= T^2 \xi^2_\sigma \cdots \xi^{p+q}_\sigma \circ \lambda T^{p+q-1}
  \\
  &= \lambda T^{p-1} \circ T\xi^2_\sigma \cdots \xi^{p+q}_\sigma
  \\
  &= \lambda T^{p-1} \circ T^p \pi^q \circ \sigma
  \,,
\end{split}
\end{equation*}
where we have used the naturality of~$\lambda$. If $\sigma(1) = p+1$, then
\begin{equation*}
\begin{split}
  T^{p+1} \pi^q \circ T\sigma \circ \lambda T^{p+q-1}
  &= T\pi \xi^2_\sigma \cdots \xi^{p+q}_\sigma \circ \lambda T^{p+q-1}
  \\
  &= 0 T^p \circ \pi\xi^2_\sigma \cdots \xi^{p+q}_\sigma
  \\
  &= 0 T^p \circ T^p \pi^q \circ \sigma
  \,,
\end{split}
\end{equation*}
where we have used the left diagram of~\eqref{eq:TanFun3} together with $\tau \circ \lambda = \lambda$ (left diagram of~\eqref{eq:TanFun5}), which imply $T\pi \circ \lambda = 0 \circ \pi$. In summary, we have:
\begin{align}
\label{eq:piTsigmalambda1}
  T^{p+1} \pi^q \circ T\sigma \circ \lambda T^{p+q-1}
  &=
  \begin{cases}
    \lambda T^{p-1} \circ T^p \pi^q \circ \sigma
    &; \sigma(1) = 1 \\
    0T^p \circ T^p \pi^q \circ \sigma
    &;  \sigma(1) = p+1
  \end{cases}
  \\
\intertext{By analogous computations, we obtain}
\label{eq:piTsigmalambda2}
  \pi^{p+1} T^q \circ T\sigma \circ \lambda T^{p+q-1}
  &=
  \begin{cases}
    \pi^p T^q \circ \sigma
    &; \sigma(1) = 1 \\
    0 T^{q-1} \circ \pi T^{q-1} \circ \pi^p T^q \circ \sigma
    &;  \sigma(1) = p+1
  \end{cases}
  \\
\label{eq:piTsigmalambda3}
  \pi T^p \pi^q \circ T\sigma \circ \lambda T^{p+q-1}
  &=
  \begin{cases}
    0 T^{p-1} \circ \pi T^{p-1} \pi^q \circ \sigma
    &; \sigma(1) = 1 \\
    T^p \pi^q \circ \sigma
    &;  \sigma(1) = p+1
  \end{cases}
  \\
\label{eq:piTsigmalambda4}
  T \pi^p T^q \circ T\sigma \circ \lambda T^{p+q-1}
  &=
  \begin{cases}
    0 T^q \circ \pi^p T^q \circ \sigma
    &; \sigma(1) = 1 \\
    \lambda T^{q-1} \circ \pi^p T^q \circ \sigma
    &;  \sigma(1) = p+1
  \end{cases}
\end{align}
Assume that $\sigma(1) = 1$. Using~\eqref{eq:piTsigmalambda1}, we obtain
\begin{equation*}
\begin{split}
  &
  \Val_R \circ T\alpha \circ
  (T^{p+1} \pi^q)_X \circ (T\sigma)_X \circ (\lambda T^{p+q-1})_X
  \\
  ={}&
  \Val_R \circ T\alpha \circ (\lambda T^{p-1})_X \circ
  (T^p \pi^q)_X \circ \sigma_X
  \\
  ={}&
  \alpha \circ (T^p \pi^q)_X \circ \sigma_X
  \,,
\end{split}
\end{equation*}
where we have used that by assumption $\alpha$ is differential~\eqref{eq:CubDiffDef}. Using~\eqref{eq:piTsigmalambda2}, we obtain
\begin{equation*}
  \beta \circ
  (\pi^{p+1} T^q)_X \circ (T\sigma)_X \circ (\lambda T^{p+q-1})_X
  =
  \beta \circ (\pi^p T^q)_X \circ \sigma_X
  \,.
\end{equation*}
By inserting the last two equations into the first summand of~\eqref{eq:TDerProd}, we obtain
\begin{equation*}
  \bigl((\Val_R \circ T\alpha) \cdot \beta \bigr)
  \circ (T\sigma)_X \circ (\lambda T^{p+q-1})_X
  = (\alpha \cdot \beta) \circ \sigma_X
\end{equation*}
for $\sigma(1) = 1$. Now assume $\sigma(1) = p+1$. By analogous computations, we obtain
\begin{equation*}
  \bigl((\Val_R \circ T\alpha) \cdot \beta \bigr)
  \circ (T\sigma)_X \circ (\lambda T^{p+q-1})_X
  = 0
  \,.
\end{equation*}
In summary:
\begin{align*}
  \bigl((\Val_R \circ T\alpha) \cdot \beta \bigr)
  \circ (T\sigma)_X \circ (\lambda T^{p+q-1})_X
  &=
  \begin{cases}
    (\alpha \cdot \beta) \circ \sigma_X &; \sigma(1) = 1 \\
    0 &;  \sigma(1) = p+1
  \end{cases}
  \,.
  \\
  \intertext{
By analogous computations using~\eqref{eq:piTsigmalambda3} and \eqref{eq:piTsigmalambda4}, we get
  }
  \bigl( \alpha \cdot (\Val_R \circ T\beta) \bigr) \circ
  (\Swp_{1,p}T^q)_X \circ
  (T\sigma)_X \circ (\lambda T^{p+q-1})_X
  &=
  \begin{cases}
    0 &; \sigma(1) = 1 \\
    (\alpha \cdot \beta) \circ \sigma_X &;  \sigma(1) = p+1
  \end{cases}
  \,.
\end{align*}
Summing the last two equations and using~\eqref{eq:TDerProd}, we obtain
\begin{equation*}
  \Val_R \circ T(\alpha \cdot \beta) \circ (T\sigma)_X \circ (\lambda T^{p+q-1})_X
  =
  (\alpha \cdot \beta) \circ \sigma_X
  \,.
\end{equation*}
Inserting this into~\eqref{eq:WedgeDiff01}, we conclude that
\begin{equation*}
  \Val_R \circ T(\alpha \wedge \beta) \circ (\lambda T^{p+q-1})_X
  =  \alpha \wedge \beta
  \,,
\end{equation*}
which shows that $\alpha \wedge \beta$ satisfies~\eqref{eq:CubDiffDef}.

It follows from the associativity of $\Rmult: R \times R \to R$ that $(\alpha \cdot \beta) \cdot \gamma = \alpha \cdot (\beta \cdot \gamma)$. Using standard arguments for the shuffles, we can show that this in turn implies that the wedge product is associative. To show that the product is commutative graded, we first note that
\begin{equation*}
  T^p\pi^q \circ (\Swp_{q,p})_X = \pi^q T^p
  \,,\quad
  \pi^p T^q \circ (\Swp_{q,p})_X = T^q \pi^p
  \,,
\end{equation*}
where $\Swp_{q,p} \in S_{p+q}$ is the block shuffle defined in~\eqref{eq:ijShuffle}. This implies
\begin{equation*}
\begin{split}
  (\alpha \cdot \beta) \circ (\Swp_{q,p})_X
  &= \Rmult\circ\bigl(
       \alpha\circ (T^p\pi^q)_X,\beta\circ(\pi^pT^q)_X
     \bigr) \circ (\Swp_{q,p})_X
  \\
  &= \Rmult\circ\bigl(
       \alpha\circ (\pi^q T^p)_X,\beta\circ(T^q \pi^p)_X
     \bigr)
  \\
  &= \Rmult\circ\bigl(
       \beta\circ(T^q \pi^p)_X, \alpha\circ (\pi^q T^p)_X
     \bigr)
  \\
  &= \beta \cdot \alpha
  \,,
\end{split}
\end{equation*}
where we have used that $\Rmult$ is commutative. By the defining Equation~\eqref{eq:CubeWedge} of the wedge product, we obtain
\begin{equation*}
\begin{split}
  \beta \wedge \alpha
  &=
  \sum_{\mathclap{\sigma \in \Shuff(q,p)^{-1}}}
  (-1)^{|\sigma|} (\beta \cdot \alpha) \circ \sigma_X
  \\
  &=
  \sum_{\mathclap{\sigma \in \Shuff(q,p)^{-1}}}
  (-1)^{|\sigma|} (\alpha \cdot \beta) \circ (\Swp_{q,p})_X \circ \sigma_X
  \\
  &=
  (-1)^{qp}
  \sum_{\mathclap{\rho \in \Swp_{q,p} \Shuff(q,p)^{-1}}}
  (-1)^{|\rho|} (\alpha \cdot \beta) \circ \rho_X
  \\
  &=
  (-1)^{qp}
  \sum_{\mathclap{\rho \in \Shuff(p,q)^{-1}}}
  (-1)^{|\rho|} (\alpha \cdot \beta) \circ \rho_X
  \\
  &=
  (-1)^{|\alpha|\,|\beta|} \alpha \wedge \beta
  \,,
\end{split}
\end{equation*}
where we have used that by Equation~\eqref{eq:ShuffSwp} we have
\begin{equation*}
  \Swp_{q,p} \Shuff(q,p)^{-1}
  = \Swp_{p,q}^{-1} \Shuff(q,p)^{-1}
  = \Shuff(p,q)^{-1}
  \,.
\end{equation*}
This shows that the wedge product is commutative graded. We conclude that $\Omega_\mathrm{cub}(X)$ with the wedge product is a commutative graded ring. As is the case for any commutative graded ring, this implies that $\Omega_\mathrm{cub}(X)$ is a commutative graded algebra over the ring of degree 0 elements $\Omega^0_\mathrm{cub}(X) = \Hom(X,R)$.

\subsection{The differential}

In this section we prove that $d$ is a graded derivation of $\Omega_\mathrm{cub}(X)$ and a differential, $d^2 = 0$. First, we have to show that $d\omega$ is a cubical form. Analogous to the proof of the antisymmetry of $\alpha \wedge \beta$ in Section~\ref{sec:ProofRing}, for every $(1,n)$-unshuffle $\sigma$ and every $\rho \in S_{1+n}$, we have $\sigma \circ \rho = (\mu_\rho,\nu_\rho) \circ \sigma_\rho$, where $\mu_\rho = \id \in S_1$ and $\nu_\rho \in S_n$. We obtain
\begin{equation*}
\begin{split}
  d\omega \circ \rho_X
  &=
  \sum_{\mathclap{\sigma \in \Shuff(1,n)^{-1} }}
  (-1)^{|\sigma|}\Val_R \circ T\omega \circ \sigma_X \circ \rho_X
  \\
  &=
  \sum_{\mathclap{\sigma \in \Shuff(1,n)^{-1} }}
  (-1)^{|\sigma|}\Val_R \circ T\omega \circ (T\nu_\rho)_X
  \circ (\sigma_\rho)_X
  \\
  &=
  \sum_{\mathclap{\sigma \in \Shuff(1,n)^{-1} }}
  (-1)^{|\sigma| + |\nu_\rho|}\Val_R \circ T\omega
  \circ (\sigma_\rho)_X
  \\
  &=
  \sum_{\mathclap{\sigma_\rho \in \Shuff(1,n)^{-1} }}
  (-1)^{|\rho| + |\sigma_\rho|}\Val_R \circ T\omega
  \circ (\sigma_\rho)_X
  \\
  &=
  (-1)^{|\rho|} d\omega
  \,.
\end{split}
\end{equation*}
This shows that $d\omega$ is antisymmetric. To show that $d\omega$ satisfies~\eqref{eq:CubDiffDef}, we compute
\begin{equation}
\label{eq:domegaDiff1}
\begin{split}
  \Val_R \circ T(d\omega) \circ \lambda_{T^n X}
  &=
  \Val_R \circ T\Bigl(
  \sum_{\mathclap{\sigma \in \Shuff(1,n)^{-1} }}
  (-1)^{|\sigma|} \Val_R \circ T\omega \circ \sigma_X
  \Bigr) \circ \lambda_{T^n X}
  \\
  &=
  \sum_{\mathclap{\sigma \in \Shuff(1,n)^{-1} }}
  (-1)^{|\sigma|}
  \Val_R \circ T\Val_R \circ T^2 \omega \circ T\sigma_X
  \circ \lambda_{T^n X}
  \,.
\end{split}
\end{equation}
A $(1,n)$-shuffle must map either $1$ or $2$ to $1$. In the first case the shuffle is the identity. In the second case, the shuffle is of the form $T\mu \circ \tau T^{n-1}$ for $\mu \in \Shuff(1,n-1)$. It follows that an unshuffle $\sigma \in \Shuff(1,n)^{-1}$ is either the identity, $\sigma = T^{n+1}$, or of the form $\sigma = \tau T^{n-1} \circ T\mu$ for $\mu \in \Shuff(1,n-1)^{-1}$. In the case $\sigma = T^{n+1}$, the summand is
\begin{equation*}
\begin{split}
  \Val_R \circ T\Val_R \circ T^2 \omega \circ T\sigma_X
  \circ \lambda_{T^n X}
  &=
  \Val_R \circ T\Val_R \circ T^2 \omega \circ \id_{T^{n+2}X}
  \circ \lambda_{T^n X}
  \\
  &=
  \Val_R \circ T\Val_R \circ T^2 \omega \circ \lambda_{T^n X}
  \\
  &=
  \Val_R \circ T\Val_R \circ \lambda_R \circ T\omega
  \\
  &=
  \Val_R \circ T\omega \circ \sigma_X
  \,,
\end{split}
\end{equation*}
where we have used the naturality of the vertical lift, $T^2\omega \circ \lambda_{T^n X} = \lambda_R \circ T\omega$, then Equation~\eqref{eq:EtaTEtalambda} for the tangent-stable ring $R$, and finally $\sigma_X = \id_{T^{n+1}X}$. In the case $\sigma = \tau T^{n-1} \circ T\mu$, the summand is
\begin{equation*}
\begin{split}
  &\hphantom{{}={}}
  \Val_R \circ T\Val_R \circ T^2 \omega \circ T\sigma_X
  \circ \lambda_{T^n X}
  \\
  &=
  \Val_R \circ T\Val_R \circ T^2 \omega \circ (T\tau)_{T^{n-1}X} \circ T^2 \mu_X \circ \lambda_{T^n X}
  \\
  &=
  \Val_R \circ T\Val_R \circ T^2 \omega \circ (T\tau)_{T^{n-1}X} \circ \lambda_{T^n X} \circ T\mu_X
  \\
  &=
  \Val_R \circ T\Val_R \circ T^2 \omega \circ
  (\tau T)_{T^{n-1}X} \circ (T\lambda)_{T^{n-1} X} \circ
  \tau_{T^{n-1} X} \circ T\mu_X
  \\
  &=
  \Val_R \circ T\Val_R \circ \tau_R \circ
  T^2 \omega \circ
  (T\lambda)_{T^{n-1} X} \circ \tau_{T^{n-1} X} \circ T\mu_X
  \\
  &=
  \Val_R \circ
  T\Val_R \circ T^2 \omega \circ (T\lambda)_{T^{n-1} X}
  \circ \sigma_X
  \\
  &=
  \Val_R \circ
  T(\Val_R \circ T\omega \circ \lambda_{T^{n-1} X})
  \circ \sigma_X
  \\
  &=
  \Val_R \circ T\omega \circ \sigma_X
  \,,
\end{split}
\end{equation*}
where we have used the functoriality of $T$, the naturality of the vertical lift, the second diagram of Axiom~\eqref{eq:TanFun5} and the involutivity $\tau \circ \tau = T^2$ of the symmetric structure (Definition~\ref{def:SymStruc}), the naturality of $\tau$, followed by Equation~\eqref{eq:EtaTEtaSymmetric01}, the identification $\sigma_X = \tau_{T^{n-1}X} \circ T\mu_X$, the functoriality of $T$ again, and finally the assumption~\eqref{eq:CubDiffDef} for $\omega$. This shows that
\begin{equation}
\label{eq:domegaDiff2}
  \Val_R \circ T\Val_R \circ T^2 \omega \circ T\sigma_X
  \circ \lambda_{T^n X}
  =
  \Val_R \circ T\omega \circ \sigma_X
  \,,
\end{equation}
for all $\sigma \in \Shuff(1,n)^{-1}$ and all $\omega$ satisfying~\eqref{eq:CubDiffDef}. By inserting~\eqref{eq:domegaDiff2} into \eqref{eq:domegaDiff1}, we obtain
\begin{equation*}
\begin{split}
  \Val_R \circ T(d\omega) \circ \lambda_{T^n X}
  &=
  \sum_{\mathclap{\sigma \in \Shuff(1,n)^{-1} }}
  (-1)^{|\sigma|}
  \Val_R \circ T\omega \circ \sigma_X
  \\
  &= d\omega
  \,,
\end{split}
\end{equation*}
which is condition~\eqref{eq:CubDiffDef} for $d\omega$. We conclude that $d\omega$ is a differential cubical $(n+1)$-form.

Next, we will show that $d$ is a graded derivation. The differential of the wedge product of $\alpha \in \Omega^p_\mathrm{cub}(X)$ and $\beta \in \Omega^q_\mathrm{cub}(X)$ is given by
\begin{equation}
\label{eq:dWedge01}
\begin{split}
  d(\alpha \wedge \beta)
  &=
  \sum_{\mathclap{\rho \in \Shuff(1,p+q)^{-1} }}
  (-1)^{|\rho|}\Val_R \circ T(\alpha \wedge \beta) \circ \rho_X
  \\
  &=
  \sum_{\mathclap{\rho \in \Shuff(1,p+q)^{-1} }}
  (-1)^{|\rho|}\Val_R \circ T
  \Bigl(
  \sum_{\mathclap{\sigma \in \Shuff(p,q)^{-1}}} (-1)^{|\sigma|}
  (\alpha \cdot \beta) \circ \sigma_X
  \Bigr) \circ \rho_X
  \\
  &=
  \sum_{
    \mathclap{
      \substack{
        \rho \in \Shuff(1,p+q)^{-1} \\
        \sigma \in \Shuff(p,q)^{-1}
      }
    }
  }
  (-1)^{|\sigma| + |\rho|}\Val_R \circ T
  (\alpha \cdot \beta) \circ (T\sigma \circ \rho)_X
  \,.
\end{split}
\end{equation}
Now we insert both summands of the right side of~\eqref{eq:TDerProd} into the sum of the right side of~\eqref{eq:dWedge01}. For the first term, we get
\begin{equation}
\label{eq:dWedge02}
\begin{split}
  &
  \sum_{
    \mathclap{
      \substack{
        \rho \in \Shuff(1,p+q)^{-1} \\
        \sigma \in \Shuff(p,q)^{-1}
      }
    }
  }
  (-1)^{|\sigma| + |\rho|}
  \bigl( (\Val_R \circ T\alpha) \cdot \beta \bigr) \circ (T\sigma \circ \rho)_X
  \\
  ={}&
  \sum_{
    \mathclap{
      \substack{
        \sigma_\rho \in \Shuff(p+1,q)^{-1} \\
        \mu_\rho \in \Shuff(1,p)^{-1}
      }
    }
  }
  (-1)^{|\mu_\rho| + |\sigma_\rho|}
  \bigl( (\Val_R \circ T\alpha) \cdot \beta \bigr) \circ
  (\mu_\rho T^q)_X \circ (\sigma_\rho)_X
  \\
  ={}&
  \sum_{\mathclap{\sigma_\rho \in \Shuff(p+1,q)^{-1}}}
  (-1)^{|\sigma_\rho|}
  \Bigl(\Bigl( \sum_{\mathclap{\mu_\rho \in \Shuff(1,p)^{-1}}} (-1)^{|\mu_\rho|}
    (\Val_R \circ T\alpha) \circ (\mu_\rho)_X \Bigr)\cdot \beta \Bigr) \circ
  (\sigma_\rho)_X
  \\
  ={}&
  d\alpha \wedge \beta
  \,,
\end{split}
\end{equation}
where we have used Lemma~\ref{lem:ShuffShuff}. Analogously, we have
\begin{equation}
\label{eq:dWedge03}
\begin{split}
  &
  \sum_{
    \mathclap{
      \substack{
        \rho \in \Shuff(1,p+q)^{-1} \\
        \sigma \in \Shuff(p,q)^{-1}
      }
    }
  }
  (-1)^{|\sigma| + |\rho|}
  \bigl((\Val_R \circ T\beta) \cdot \alpha \bigr) \circ
  (T\Swp_{p,q})_X \circ (T\sigma \circ \rho)_X
  \\
  ={}&
  \sum_{
    \mathclap{
      \substack{
        \sigma_\rho \in \Shuff(q+1,p)^{-1} \\
        \mu_\rho \in \Shuff(1,q)^{-1}
      }
    }
  }
  (-1)^{|\Swp_{p,q}|+|\mu_\rho| + |\sigma_\rho|}
  \bigl( (\Val_R \circ T\beta) \cdot \alpha \bigr) \circ
  (\mu_\rho T^p)_X \circ (\sigma_\rho)_X
  \\
  ={}&
  (-1)^{pq}
  \sum_{\mathclap{\sigma_\rho \in \Shuff(q+1,p)^{-1}}}
  (-1)^{|\sigma_\rho|}
  \Bigl(\Bigl( \sum_{\mathclap{\mu_\rho \in \Shuff(1,q)^{-1}}} (-1)^{|\mu_\rho|}
    (\Val_R \circ T\beta) \circ (\mu_\rho)_X \Bigr)\cdot \alpha \Bigr) \circ
  (\sigma_\rho)_X
  \\
  ={}&
  (-1)^{pq}
  d\beta \wedge \alpha
  \\
  ={}&
  (-1)^p \alpha \wedge d\beta
  \,,
\end{split}
\end{equation}
where we have used Lemma~\ref{lem:ShuffShuff} and, in the last step, the graded commutativity of the wedge product. By inserting~\eqref{eq:TDerProd} into~\eqref{eq:dWedge01} and then inserting~\eqref{eq:dWedge02} and~\eqref{eq:dWedge03}, we obtain
\begin{equation*}
  d(\alpha \wedge \beta)
  =
  d\alpha \wedge \beta
    + (-1)^{p}\alpha \wedge d\beta
  \,,
\end{equation*}
which shows that $d$ is a graded derivation of $\Omega_\mathrm{cub}(X)$. Finally, we will show that $d^2 = 0$. Let $\omega$ be an $n$-form. We have the equality
\begin{equation}
\label{eq:dWedge04}
\begin{split}
  \Val_R \circ T\Val_R \circ T^2\omega \circ (\tau T^n)_X
  &=
  \Val_R \circ T\Val_R \circ T^2\omega \circ \tau_{T^n X}
  \\
  &=
  \Val_R \circ T\Val_R \circ \tau_R \circ T^2\omega
  \\
  &=
  \Val_R \circ T\Val_R \circ T^2\omega
  \,,
\end{split}
\end{equation}
where we have used the naturality of $\tau$ and Equation~\eqref{eq:EtaTEtaSymmetric01}. Now we can compute
\begin{equation*}
\begin{split}
  d^2 \omega
  &=
  \sum_{\mathclap{\rho \in \Shuff(1,n+1)^{-1}}}(-1)^{|\rho|}
  \Bigl( \Val_R \circ T
  \bigl( \sum_{\mathclap{\sigma \in \Shuff(1,n)^{-1}}}(-1)^{|\sigma|}
  \Val_R \circ T\omega \circ \sigma_X \bigr) \Bigr) \circ \rho_X
  \\
  &=
  \sum_{
    \mathclap{
      \substack{
        \rho \in \Shuff(1,n+1)^{-1} \\
        \sigma \in \Shuff(1,n)^{-1}
      }
    }
  }(-1)^{|\sigma|+|\rho|}
  \Val_R \circ T\Val_R \circ T^2\omega \circ (T\sigma \circ \rho)_X
  \\
  &=
  \sum_{
    \mathclap{
      \substack{
        \sigma_\rho \in \Shuff(2,n)^{-1} \\
        \mu_\rho \in \Shuff(1,1)^{-1}
      }
    }
  }
  (-1)^{|\mu_\rho|+|\sigma_\rho|}
  \Val_R \circ T\Val_R \circ T^2\omega \circ (\mu_\rho T^n \circ \sigma_\rho)_X
  \\
  &=
  \sum_{
    \mathclap{
        \sigma_\rho \in \Shuff(2,n)^{-1}
    }
  }
  (-1)^{|\sigma_\rho|}
  \Bigl(
  \sum_{
    \mathclap{
      \mu_\rho \in S_2
    }
  }
  (-1)^{|\mu_\rho|}
  \Val_R \circ T\Val_R \circ T^2\omega \circ (\mu_\rho T^n)_X
  \Bigr) \circ (\sigma_\rho)_X
  \\
  &=
  \sum_{
    \mathclap{
        \sigma_\rho \in \Shuff(2,n)^{-1}
    }
  }
  (-1)^{|\sigma_\rho|}
  \bigl(
    \Val_R \circ T\Val_R \circ T^2\omega
    - \Val_R \circ T\Val_R \circ T^2\omega \circ (\tau T^n)_X
  \bigr) \circ (\sigma_\rho)_X
  \\
  &= 0
  \,,
\end{split}
\end{equation*}
where we have used Lemma~\ref{lem:ShuffShuff} and~\eqref{eq:dWedge04}. This concludes the proof of Statement~(i) of Theorem~\ref{thm:CartanCube}. \qed

\subsection{Prolongation of vector fields}

It remains to prove Statement~(ii) of Theorem~\ref{thm:CartanCube}. We begin with some auxiliary statements. By the identification
\begin{equation*}
  T^{n+1}X = T^n TX = T^n X'
\end{equation*}
where $X' = TX$, a map $\omega\colon T^{n+1} X \to R$ can be viewed as a map
\begin{equation*}
  \omega'\colon T^n X' \longrightarrow R
  \,.
\end{equation*}
If $\omega$ is antisymmetric under the group $S_{n+1}$ acting on $T^{n+1} X$, it is antisymmetric under the subgroup $S_n \subset S_{n+1}$ that acts by $\sigma_{X'} = (\sigma T)_X$ on $T^n TX$. If $\omega$ is $R$-linear at the positions $1, \ldots, n+1$, then $\omega'$ is $R$-linear at the positions $1, \ldots, n$, since $+^{(i)}_{T^{n-1}X'} = +^{(i)}_{T^n X}$ and $\kappa^{(i)}_{T^{n-1}X'} = \kappa^{(i)}_{T^n X}$ for $1 \leq i \leq n$. The defining Condition~\eqref{eq:CubDiffDef} for differential cubical forms is not affected by the equality $T^{n+1} = T^n T$. We conclude that we have the tautological injective map
\begin{equation}
\label{eq:FormsTautMap}
\begin{aligned}
  \Omega^{n+1}_\mathrm{cub}(X)
  &\longrightarrow
  \Omega^{n}_\mathrm{cub}(X')
  \\
  \omega &\longmapsto \omega'
  \,.
\end{aligned}
\end{equation}

For every vector field $v\colon X \to TX$, we define
\begin{equation}
\label{eq:vPrimeDef}
  v' \coloneqq \tau_X \circ Tv
  \colon X' \longrightarrow TX'
  \,,
\end{equation}
which satisfies
\begin{equation*}
\begin{split}
  \pi_{X'} \circ v'
  &=
  \pi_{TX} \circ \tau_X \circ Tv
  =
  T\pi_X \circ Tv
  =
  T\id_X
  \\
  &=
  \id_{X'}
  \,,
\end{split}
\end{equation*}
so $v'$ is a vector field on $X'$. We call $v'$ the \textbf{prolongation} of $v$. For $\omega \in \Omega^{n+1}_\mathrm{cub}(X)$ with $n \geq 1$ we have
\begin{equation}
\label{eq:InnPrime}
\begin{split}
  \iota_{v'}\omega'
  &= (-1)^{n+1} \omega' \circ T^{n-1} v'
  \\
  &= (-1)^{n+1} \omega \circ
  T^{n-1}(\tau_X \circ Tv)
  \\
  &= (-1)^{n+1} \omega \circ
  (T^{n-1}\tau)_X \circ T^n v
  \\
  &= -(-1)^{n+1} \omega \circ T^n v
  \\
  &= (\iota_v\omega)'
  \,,
\end{split}
\end{equation}
where we have used the definition~\eqref{eq:CubeInnDer} of the inner derivative, the definition of~$v'$, the functoriality of~$T$, and the antisymmetry of~$\omega$.

\begin{Lemma}
\label{lem:ProlongBracket}
Let $v$, $w$ be vector fields on $X$ and $v'$, $w'$ their prolongations to vector fields on $X' = TX$. Then $[v',w']$ is the prolongation of $[v,w]$, that is,
\begin{equation*}
  [v',w'] = [v,w]'
  \,.
\end{equation*}
\end{Lemma}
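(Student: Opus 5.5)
The plan is to use the defining relation \eqref{eq:deltaBracketRel} on $X'=TX$: $[v',w']$ is the unique vector field on $X'$ with
\begin{equation*}
  \delta(v',w') = (\lambda_2)_{X'}\circ\bigl(w',[v',w']\bigr).
\end{equation*}
So it suffices to show that $\delta(v',w') = (\lambda_2)_{TX}\circ\bigl(w',[v,w]'\bigr)$. Uniqueness from the pullback \eqref{diag:lambda2pullback} then yields the claim.

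First I compute $\delta(v',w')$ in terms of $\delta(v,w)$. By definition \eqref{eq:DeltaOrig},
\begin{equation*}
  \delta(v',w') = -_{T^2X}\circ\bigl(Tw'\circ v',\ \tau_{TX}\circ Tv'\circ w'\bigr).
\end{equation*}
Expanding $v'=\tau_X\circ Tv$ gives
\begin{align*}
  Tw'\circ v' &= T\tau_X\circ T^2w\circ\tau_X\circ Tv = T\tau_X\circ\tau_{TX}\circ T(Tw\circ v),\\
  \tau_{TX}\circ Tv'\circ w' &= T\tau_X\circ\tau_{TX}\circ T(\tau_X\circ Tv\circ w).
\end{align*}
The first line uses naturality of $\tau$. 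The second uses naturality of $\tau$ together with the braid relation $\tau_{12}\tau_{23}\tau_{12}=\tau_{23}\tau_{12}\tau_{23}$. Both expressions thus have the common prefix $\theta\coloneqq T\tau_X\circ\tau_{TX}$, which acts on $T^3X$ as a permutation.

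Next I need that this permutation commutes with subtraction, in the sense
\begin{equation*}
  -_{T^2X}\circ(\theta\times\theta) = \theta\circ T(-_{TX}).
\end{equation*}
This comes from Diagram \eqref{eq:TanFun2} (compatibility of $\tau$ with $+$) and its $T$-image. Granting it, $\delta(v',w') = \theta\circ T\delta(v,w) = \theta\circ T(\lambda_2)_X\circ T(w,[v,w])$.

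It then remains to commute $\theta$ past $T\lambda_2$, i.e.\ to show
\begin{equation*}
  \theta\circ T(\lambda_2)_X = (\lambda_2)_{TX}\circ(\tau_X\times_{\tau_X}\tau_X)\circ\nu_2 .
\end{equation*}
This would give exactly $(\lambda_2)_{TX}\circ(\tau_X\circ Tw,\ \tau_X\circ T[v,w]) = (\lambda_2)_{TX}\circ(w',[v,w]')$, as required. To prove it I would unfold $\lambda_2=\tau\circ{+T}\circ(T0\times_0\lambda)$ and push $\theta$ through each factor. The tools are the second diagram of \eqref{eq:TanFun5}, $T\lambda\circ\tau = \tau T\circ T\tau\circ\lambda T$ rearranged, naturality of $0$ and $\tau$, the additivity from \eqref{eq:TanFun2}, and the braid relation.

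This last identity is where I expect the main obstacle: bookkeeping of which copy of $T$ each transposition acts on. If the direct approach gets messy, I would first check the identity componentwise after composing with the jointly monic projections of the pullback \eqref{diag:lambda2pullback}. The same care applies to the subtraction-commutation step, where the minus on $T^2X$ is in the outermost $\pi_{T^2X}$-fiber while $T(-_{TX})$ acts on the next factor, so the first $\tau$ in $\theta$ must be used to align the two via \eqref{eq:TanFun2}.
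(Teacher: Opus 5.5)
Your proposal is correct and is essentially the paper's proof. The paper likewise establishes
$T\delta(v,w) = \tau_{TX}\circ T\tau_X\circ\delta(v',w')$ and
$T\bigl((\lambda_2)_X\circ(w,[v,w])\bigr) = \tau_{TX}\circ T\tau_X\circ(\lambda_2)_{TX}\circ(w',[v,w]')$, where $\tau_{TX}\circ T\tau_X = \theta^{-1}$, using exactly the ingredients you list, then cancels this isomorphism and concludes because $(\lambda_2)_{X'}$ is a monomorphism. The identity you flag as the main obstacle goes through directly via the braid relation together with $T\tau_X\circ\tau_{TX}\circ T\lambda_X = \lambda_{TX}\circ\tau_X$ and $T\tau_X\circ\tau_{TX}\circ T^2 0_X = T0_{TX}\circ\tau_X$, so no componentwise fallback is needed.
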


\begin{proof}
By applying the tangent functor to Equation~\eqref{eq:deltaBracketRel} we obtain the equation
\begin{equation}
\label{eq:ProlongBracket01}
  T\delta(v,w) = T(\lambda_2)_X \circ (Tw, T[v,w])
  \,,
\end{equation}
where $\delta(v,w)$ is given by Equation~\eqref{eq:DeltaOrig} and the vertical lift $\lambda_2$ by Equation~\eqref{eq:VertLiftExt}. To compute the left side of~\eqref{eq:ProlongBracket01} we will need the relation
\begin{equation*}
\begin{split}
  T(-_{TX})
  &= \tau_{TX} \circ -_{TTX} \circ (\tau_{TX} \times \tau_{TX})
  \\
  &= \tau_{TX} \circ T\tau_X \circ -_{T^2 X} \circ
  (T\tau_X \times T\tau_X) \circ
  (\tau_{TX} \times \tau_{TX})
  \\
  &= \tau_{TX} \circ T\tau_X \circ -_{T^2 X} \circ
    \bigl( (T\tau_X \circ \tau_{TX}) \times (T\tau_X \circ \tau_{TX}) \bigr)
  \,,
\end{split}
\end{equation*}
where we have used that~$\tau$ is a morphism of bundles of abelian groups~\eqref{eq:TanFun2} and the naturality of $-_X$. We also need the relation
\begin{equation*}
\begin{split}
  Tv' \circ w'
  &= T(\tau_X \circ Tv) \circ \tau_X \circ Tw
  \\
  &= T\tau_X \circ T^2 v \circ \tau_X \circ Tw
  \\
  &= T\tau_X \circ \tau_{TX} \circ T^2 v \circ Tw
  \,,
\end{split}
\end{equation*}
where we have used the naturality of $\tau$. Using the last two relations, we compute
\begin{equation}
\label{eq:ProlongBracket02}
\begin{split}
  T\delta(v,w)
  &= T(-_{TX}) \circ\bigl( T(Tw \circ v), T(\tau_X \circ Tv \circ w) \bigr)
  \\
  &= \tau_{TX} \circ T\tau_X \circ -_{T^2 X} \circ
    \bigl( (T\tau_X \circ \tau_{TX}) \times (T\tau_X \circ \tau_{TX}) \bigr) \circ
  \\
  &\qquad{}
  \bigl( T^2 w \circ Tv, T\tau_X \circ T^2 v \circ Tw \bigr)
  \\
  &= \tau_{TX} \circ T\tau_X \circ -_{T^2 X} \circ
     ( Tw' \circ v', T\tau_X \circ \tau_{TX} \circ T\tau_X \circ T^2 v \circ Tw )
  \\
  &= \tau_{TX} \circ T\tau_X \circ -_{T^2 X} \circ
     ( Tw' \circ v', \tau_{TX} \circ T\tau_X \circ \tau_{TX} \circ T^2 v \circ Tw )
  \\
  &= \tau_{TX} \circ T\tau_X \circ -_{T^2 X} \circ
     ( Tw' \circ v', \tau_{TX} \circ Tv' \circ w')
  \\
  &= \tau_{TX} \circ T\tau_X \circ \delta(v',w')
  \,,
\end{split}
\end{equation}
where we have also used the braid relations for $\tau$. Let us abbreviate $u = [v,w]$. For the right side of~\eqref{eq:ProlongBracket01}, we obtain
\begin{equation}
\label{eq:ProlongBracket03}
\begin{split}
  &\hphantom{=}{} T\bigl( (\lambda_2)_X \circ (w,u) \bigr)
  \\
  &=
  T\tau_X \circ T(+_{TX}) \circ (T^2 0_X \times T\lambda_X) \circ (Tw,Tu)
  \\
  &=
  T\tau_X \circ \tau_{TX} \circ T\tau_X \circ +_{T^2 X} \circ
  \bigl( (T\tau_X \circ \tau_{TX}) \times (T\tau_X \circ \tau_{TX}) \bigr) \circ
  \\
  &\qquad{}
  (T^2 0_X \times T\lambda_X) \circ (Tw,Tu)
  \\
  &=
  \tau_{TX} \circ T\tau_X \circ \tau_{TX} \circ +_{T^2 X} \circ
  (T0_{TX} \times \lambda_{TX}) \circ (\tau_X \times \tau_X) \circ (Tw,Tu)
  \\
  &=
  \tau_{TX} \circ T\tau_X \circ (\lambda_2)_{TX} \circ (w',u')
  \,.
\end{split}
\end{equation}
By inserting~\eqref{eq:ProlongBracket02} and~\eqref{eq:ProlongBracket03} into~\eqref{eq:ProlongBracket01}, we get the equation
\begin{equation*}
  \tau_{TX} \circ T\tau_X \circ \delta(v',w')
  =
  \tau_{TX} \circ T\tau_X \circ (\lambda_2)_{TX} \circ (w', [v,w]')
  \,,
\end{equation*}
where $\delta(v',w')$ denotes the morphism~\eqref{eq:DeltaOrig} formed for the object $X' = TX$. Since $\tau_{TX} \circ T\tau_X$ is an isomorphism, it follows that $\delta(v',w') = (\lambda_2)_{X'} \circ (w', [v,w]')$. On the other hand, $\delta(v',w') = (\lambda_2)_{X'} \circ (w', [v',w'])$ by Equation~\eqref{eq:deltaBracketRel} for $X'$. By Diagram~\eqref{diag:lambda2pullback}, $(\lambda_2)_{X'}$ is the pullback of the split monomorphism $0_{X'}$, hence a monomorphism. We conclude that $[v',w'] = [v,w]'$.
\end{proof}

\begin{Lemma}
\label{lem:FormLambdaZero}
Assume that $R$ has no $2$-torsion; let $\omega\colon T^n X \to R$ be a morphism, where $n \geq 2$. If $\omega$ is antisymmetric then
\begin{equation}
\label{eq:FormLambdaZero01}
  \omega \circ T^{i-1}\lambda_{T^{n-i-1}X} = 0
\end{equation}
for all $1 \leq i \leq n-1$. If $\omega$ is antisymmetric and $R$-linear at every position, then
\begin{equation}
\label{eq:FormLambdaZero02}
  \omega \circ T^{i-1}(\lambda_2)_{T^{n-i-1}X} = 0
\end{equation}
for all $1 \leq i \leq n-1$.
\end{Lemma}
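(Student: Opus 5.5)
For \eqref{eq:FormLambdaZero01} I will use the symmetry of the vertical lift under $\tau$. The left diagram of~\eqref{eq:TanFun5} gives $\tau\circ\lambda=\lambda$, and so does every whiskering of it. Set $Y\coloneqq T^{n-i-1}X$. The morphism $T^{i-1}\lambda_Y\colon T^{n-1}X\to T^nX$ then satisfies
\begin{equation*}
  (T^{i-1}\tau)_{TY}\circ T^{i-1}\lambda_Y = T^{i-1}(\tau_Y\circ\lambda_Y) = T^{i-1}\lambda_Y \,.
\end{equation*}
The morphism $(T^{i-1}\tau T^{n-i-1})_X$ is exactly the action of the adjacent transposition $\tau_{i,i+1}\in S_n$ on $T^nX$. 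Antisymmetry of $\omega$ therefore gives
\begin{equation*}
  \omega\circ T^{i-1}\lambda_Y = \omega\circ(\tau_{i,i+1})_X\circ T^{i-1}\lambda_Y = -\,\omega\circ T^{i-1}\lambda_Y \,.
\end{equation*}
Hence $2\,\omega\circ T^{i-1}\lambda_Y=0$ in the ring $\Hom(T^{n-1}X,R)$. Because $R$ has no $2$-torsion, this forces $\omega\circ T^{i-1}\lambda_Y=0$. Here I read ``no 2-torsion'' as the statement that $2f=0$ implies $f=0$ for $R$-valued morphisms.

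For \eqref{eq:FormLambdaZero02} I will use the second expression in~\eqref{eq:VertLiftExt}:
\begin{equation*}
  \lambda_2 = T{+}\circ\nu_2^{-1}\circ(0T\times_0\lambda) \,.
\end{equation*}
Applying $T^{i-1}$ and using the recursion~\eqref{eq:AddPosRecurs}, I expect $T^{i-1}(T+_Y)$ to be the addition $+^{(i+1)}$ at position $i+1$ of $T^nX$, up to the canonical identifications $\nu$. The morphism $T^{i-1}(\lambda_2)_Y$ then becomes a sum at position $i+1$ of two terms, namely $T^{i-1}0_{TY}\circ\pr_1$ and $T^{i-1}\lambda_Y\circ\pr_2$, where $\pr_1,\pr_2$ are the projections from $T^{i-1}T_2Y$. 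Since $\omega$ is additive at position $i+1$ by~\eqref{eq:CubForm1}, I get
\begin{equation*}
  \omega\circ T^{i-1}(\lambda_2)_Y = \omega\circ T^{i-1}0_{TY}\circ\pr_1 + \omega\circ T^{i-1}\lambda_Y\circ\pr_2 \,.
\end{equation*}
The second summand vanishes by the first part of the lemma.

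It remains to show that the first summand vanishes, that is, $\omega\circ T^{i-1}0_{TY}=0$. Here $0_{TY}=0T$ inserts a zero at position $i$, and a form that is $R$-linear at position $i$ sends the zero at that position to $0$. To see this, use $0=\kappa\circ(\Rzero\times\id)$ from~\eqref{diag:RzeroPreserve} at position $i$, whiskered by $T^{i-1}$ via~\eqref{eq:ScalMultPosRecurs}, together with~\eqref{eq:CubForm2}. This gives $\omega\circ(\text{zero at position }i)=\Rmult\circ(\Rzero,\ldots)=0$. Alternatively, additivity alone suffices, since $0+0=0$ at position $i$ gives $2\omega\circ T^{i-1}0_{TY}=\omega\circ T^{i-1}0_{TY}$.

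The main obstacle I expect is the bookkeeping in the second part. I have to check that $T^{i-1}$ applied to $T{+}\circ\nu_2^{-1}$ really coincides with the position-$(i+1)$ addition $+^{(i+1)}_{T^{n-1}X}$ from~\eqref{eq:TaddAti}, including the identification of $T^{i-1}T_2T^{n-i}$ with the fiber product. I also have to check that the zero section $T^{i-1}0_{TY}$ corresponds to the zero at the correct position, so that linearity at that single position applies. Both checks are routine given~\eqref{eq:AddPosRecurs} and~\eqref{eq:ScalMultPosRecurs}, applied inductively in $i$.
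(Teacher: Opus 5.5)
Your proof is correct and is essentially the paper's argument: the first part is identical ($\tau\circ\lambda=\lambda$, antisymmetry under $\tau_{i,i+1}$, then no $2$-torsion). In the second part, you likewise split $T^{i-1}\lambda_2$ by additivity at a single position into a zero-section term, killed by linearity, and a vertical-lift term, killed by the first part. The only difference is that you use the second formula in~\eqref{eq:VertLiftExt}, so additivity is applied at position $i+1$ with the zero at position $i$. The paper instead uses $\lambda_2=\tau\circ{+T}\circ(T0\times_0\lambda)$, removes $\tau$ by antisymmetry at the cost of a sign, and applies additivity at position $i$ with the zero at position $i+1$; this spares you one use of antisymmetry but changes nothing essential. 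One small slip: your $(T^{i-1}\tau)_{TY}$ should read $T^{i-1}\tau_Y$.
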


\begin{proof}
Since $\tau \circ \lambda = \lambda$ by the left diagram of~\eqref{eq:TanFun5}, we have
\begin{equation*}
\begin{split}
  \omega \circ (T^{i-1}\lambda T^{n-i-1})_X
  &=
  \omega \circ \bigl(T^{i-1}(\tau \circ \lambda) T^{n-i-1} \bigr)_X
  \\
  &=
  - \omega \circ (T^{i-1}\tau T^{n-i-1})_X \circ
  (T^{i-1}\lambda T^{n-i-1})_X
  \\
  &=
  - \omega \circ
  (T^{i-1}\lambda T^{n-i-1})_X
  \,,
\end{split}
\end{equation*}
where we have used that $\omega$ is antisymmetric. Hence $2\omega \circ T^{i-1}\lambda_{T^{n-i-1}X} = 0$. Since $R$ is assumed to have no $2$-torsion, it follows that $\omega \circ T^{i-1}\lambda_{T^{n-i-1}X} = 0$. Assume now that $\omega$ is antisymmetric and $R$-linear at every position. $R$-linearity implies that
\begin{equation}
\label{eq:LinFormZero01}
  \omega \circ T^{i-1}0_{T^{n-i}X} = 0
  \,,
\end{equation}
for all $1 \leq i \leq n$. With this relation, we obtain
\begin{equation*}
\begin{split}
  \omega \circ T^{i-1}(\lambda_2)_{T^{n-i-1}X}
  &=
  \omega \circ (T^{i-1}\tau T^{n-i-1})_X \circ
  \bigl( T^{i-1}\bigl(+T \circ (T0 \times_0 \lambda)\bigr)T^{n-i-1}
  \bigr)_X
  \\
  &=
  - \omega \circ
  \bigl(T^{i-1}(+)T^{n-i}\bigr)_X \circ
  \bigl( T^{i-1}(T0 \times_0 \lambda)T^{n-i-1} \bigr)_X
  \\
  &=
  - \omega \circ T^i 0_{T^{n-i-1}X} \circ \pr_1
  - \omega \circ T^{i-1}\lambda_{T^{n-i-1}X} \circ \pr_2
  \\
  &=
  0
  \,,
\end{split}
\end{equation*}
where we have used the defining Equation~\eqref{eq:VertLiftExt} of $\lambda_2$, the antisymmetry of $\omega$, the $R$-linearity of $\omega$, and in the last step~\eqref{eq:LinFormZero01} and~\eqref{eq:FormLambdaZero01}.
\end{proof}

For every $f\colon X \to R$, we define the function
\begin{equation*}
  f' \coloneqq f \circ \pi_X\colon X' \longrightarrow R
\end{equation*}
on $X' \coloneqq TX$. For $\omega \in \Omega^{n+1}_\mathrm{cub}(X)$, where $n \geq 0$ and $(\pi^0)_{X'} \coloneqq \id_{X'}$, we have
\begin{equation}
\label{eq:InnPrime2}
\begin{split}
  f' \wedge' \omega'
  &= \Rmult \circ \bigl( f' \circ (\pi^n)_{X'}, \omega' \bigr)
  \\
  &= \Rmult \circ \bigl( (f \circ \pi_X) \circ (\pi^n T)_X, \omega \bigr)
  \\
  &= \Rmult \circ (f \circ (\pi^{n+1})_X, \omega)
  \\
  &= f \wedge \omega
  \,,
\end{split}
\end{equation}
where the first wedge product is in $\Omega(X')$ and the second wedge product in $\Omega(X)$.

The prolongation of $fv$ is in general not equal to $f'v'$, since it also contains a vertical term involving the derivative of $f$. The next statement shows that, if $R$ has no $2$-torsion, this additional term is in the kernel of every cubical form.

\begin{Lemma}
\label{lem:fPrimevPrime}
Assume that $R$ has no $2$-torsion; let $\omega\colon T^{n+1} X \to R$ be a cubical $(n+1)$-form on $X$ and $\omega'$ the corresponding $n$-form on $X' = TX$. Then
\begin{equation}
\label{eq:fPrimevPrime}
  \iota_{(fv)'} \omega'
  = \iota_{f'v'} \omega'
\end{equation}
for all $f\colon X \to R$ and $v \in \calX(X)$.
\end{Lemma}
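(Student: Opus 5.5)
The plan is to compute the prolongation $(fv)'$ explicitly. I will show that it differs from $f'v'$ by a vertical term lying in the image of $\lambda_2$, and then use Lemma~\ref{lem:FormLambdaZero} to show that this term is annihilated by $\omega$. For $n=0$ both sides of \eqref{eq:fPrimevPrime} vanish, since the inner derivative of a $0$-form on $X'$ is zero. So assume $n \geq 1$. By definition,
\begin{equation*}
  (fv)' = \tau_X \circ T\bigl(\kappa_X \circ (f,v)\bigr) = \tau_X \circ T\kappa_X \circ (Tf, Tv)\,.
\end{equation*}

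\textbf{Step 1: split $T\kappa_X$ into partial tangent morphisms.} By Proposition~\ref{prop:TSumPartials}, $T\kappa_X$ is the fiberwise sum, in $T^2X \to TX$ over the fibration $T\pi$, of $\Tpart{1}\kappa_X \circ (\id \times \pi)$ and $\Tpart{2}\kappa_X \circ (\pi \times \id)$. Since $\tau$ is additive by \eqref{eq:TanFun2}, applying $\tau_X$ turns this into a sum with respect to $+_{TX}$ of two terms.
\begin{itemize}
\item For the $\Tpart{2}$ term, Diagram~\eqref{diag:ScalarMult3} and $\tau\circ\tau = \id$ give
\begin{equation*}
  \tau_X \circ \Tpart{2}\kappa_X \circ (f, Tv) = \kappa_{TX}\circ (f\circ\pi_X, \tau_X \circ Tv) = f'v'\,.
\end{equation*}
\item For the $\Tpart{1}$ term, Diagram~\eqref{diag:ScalarMult2} gives
\begin{equation*}
  \tau_X \circ (\lambda_2)_X \circ \bigl(\kappa_X\circ(f\circ\pi_X, v\circ \pi_X),\ \kappa_X\circ(\Val_R \circ Tf, v\circ\pi_X)\bigr)\,.
\end{equation*}
Here I use that $\pi_R \circ Tf = f\circ\pi_X$ and $\pi_{TX}\circ Tv = v\circ \pi_X$.
\end{itemize}
Hence $(fv)' = +_{TX}\circ\bigl(f'v',\ \tau_X\circ(\lambda_2)_X\circ(a,b)\bigr)$ for certain morphisms $a,b\colon TX\to TX$. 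Checking that the fiber-product bookkeeping matches will be fiddly but routine.

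\textbf{Step 2: apply $\omega$.} We have
\begin{equation*}
  \iota_{(fv)'}\omega' = (-1)^{n+1}\,\omega\circ T^{n-1}\bigl((fv)'\bigr)\,.
\end{equation*}
Here $T^{n-1}(+_{TX})$ is the addition $+^{(n)}_{T^{n}X}$ at the $n$-th position, by iterating the recursion~\eqref{eq:AddPosRecurs}. Since $\omega$ is a cubical form, it is $R$-linear at position $n$, so \eqref{eq:CubForm1} splits the expression into $\iota_{f'v'}\omega'$ plus
\begin{equation*}
  (-1)^{n+1}\,\omega\circ T^{n-1}\tau_X\circ T^{n-1}(\lambda_2)_X\circ T^{n-1}(a,b)\,.
\end{equation*}
By antisymmetry, the factor $T^{n-1}\tau_X$ contributes only a sign. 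The remaining term $\omega\circ T^{n-1}(\lambda_2)_X$ vanishes by \eqref{eq:FormLambdaZero02} of Lemma~\ref{lem:FormLambdaZero}, applied to the $(n+1)$-form $\omega$ with $i=n$. This is admissible since $n+1\geq 2$ and $1\leq n\leq n$, and it is exactly where the no-$2$-torsion hypothesis enters.

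\textbf{Main obstacle.} The main difficulty is Step 1, namely getting the decomposition of $(fv)'$ as an honest sum in the bundle $T^2X\to TX$ with respect to $\pi_{TX}$, which is the bundle structure relevant for $v'$. Proposition~\ref{prop:TSumPartials} instead yields a sum with respect to $T\pi$ via $T+$. I would handle this by composing with $\tau$ and using the right diagram of \eqref{eq:TanFun2}, which converts $T+$ into $+T$. I would also check that the base points of the two summands agree, both being $v'$ composed with the projection.
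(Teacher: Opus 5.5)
Your overall strategy is the paper's: split $T\kappa_X$ with Proposition~\ref{prop:TSumPartials}, identify the pieces via \eqref{diag:ScalarMult2} and \eqref{diag:ScalarMult3}, then kill the $\lambda_2$-piece using linearity, antisymmetry and \eqref{eq:FormLambdaZero02} with $i=n$. However, you resolve the step you call the main obstacle in the wrong direction, and the decomposition you state at the end of Step~1 is false.

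Proposition~\ref{prop:TSumPartials} applied to $\kappa_X\colon R\times TX\to TX$ gives a sum with respect to $+_Z$ for $Z=TX$. That is $+_{TX}$, the addition of the bundle $\pi_{TX}$. Composing with $\tau_X$ and using the right diagram of \eqref{eq:TanFun2} then turns it into a sum with respect to $T{+_X}$. The result is
\[
  (fv)' = T{+_X}\circ\nu_{2,X}^{-1}\circ\bigl(\tau_X\circ(\lambda_2)_X\circ\bigl(fv\circ\pi_X,(df)(v\circ\pi_X)\bigr),\ f'v'\bigr),
\]
a sum in the bundle $T\pi_X\colon T^2X\to TX$, which is exactly the paper's formula.

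Your version $+_{TX}\circ\bigl(f'v',\tau_X\circ(\lambda_2)_X\circ(a,b)\bigr)$ is not even defined:
\begin{itemize}
\item $\pi_{TX}\circ f'v'=\id_{TX}$;
\item $\pi_{TX}\circ\tau_X\circ(\lambda_2)_X=T\pi_X\circ(\lambda_2)_X=0_X\circ\pi_X\circ\pr_1$, by the left diagram of \eqref{eq:TanFun2} and \eqref{diag:lambda2pullback}.
\end{itemize}
So your vertical summand lies over $0_X\circ\pi_X$, not over the identity. The base points that do agree are the $T\pi_X$-images, both equal to $(fv)\circ\pi_X$. (A decomposition over $\pi_{TX}$ does exist. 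Its vertical term is $(\lambda_2)_X\circ\bigl(\id_{TX},(df)(v\circ\pi_X)\bigr)$, with no $\tau_X$, and deriving it abstractly needs an extra interchange argument between $+T$ and $T+$.)

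The repair is simply not to convert. Applying $T^{n-1}$ to the $T{+_X}$-sum gives $T^{n-1}(T{+_X})=T^n{+_X}=+^{(n+1)}_{T^nX}$, by iterating \eqref{eq:AddPosRecurs}. So you must use $R$-linearity of $\omega$ at the $(n+1)$-th position instead of the $n$-th; this is available because a cubical form is linear at every position. With this change your Step~2 goes through unchanged: $T^{n-1}\tau_X$ contributes a sign by antisymmetry, $\omega\circ T^{n-1}(\lambda_2)_X=0$ by \eqref{eq:FormLambdaZero02} with $i=n$, and what remains is $\iota_{f'v'}\omega'$. (Minor: in your $\Tpart{2}$ term the argument should be $(f\circ\pi_X,Tv)$, not $(f,Tv)$.)
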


\begin{proof}
We have the equality
\begin{equation}
\label{eq:fvPrime}
\begin{split}
  (fv)'
  &=
  \tau_X \circ T(fv)
  \\
  &= \tau_X \circ T\kappa_X \circ (Tf, Tv)
  \\
  &= \tau_X \circ +_{TX} \circ
  \bigl( T_{(1)} \kappa_X \circ (Tf, v \circ \pi_X),
  T_{(2)} \kappa_X \circ (f \circ \pi_X, Tv) \bigr)
  \\
  &= {T+_X} \circ \nu_{2,X}^{-1} \circ (\tau_X \times_{TX} \tau_X) \circ
  \bigl( T_{(1)} \kappa_X \circ (Tf, v \circ \pi_X),
  T_{(2)} \kappa_X \circ (f', Tv) \bigr)
  \\
  &= {T+_X} \circ \nu_{2,X}^{-1} \circ
  \bigl( \tau_X \circ (\lambda_2)_X \circ
  \bigl( fv \circ \pi_X, (df)(v \circ \pi_X) \bigr),
  f' v' \bigr)
  \,,
\end{split}
\end{equation}
where we have used the definition~\eqref{eq:vPrimeDef} of the prolongation, the definition~\eqref{eq:fvDef} of $fv$, Proposition~\ref{prop:TSumPartials}, the naturality of $\pi$ and the definition of $f'$, that $\tau$ is a morphism of bundles of abelian groups in the form of the right diagram of~\eqref{eq:TanFun2}, and, in the last step, the following two relations: By Diagram~\eqref{diag:ScalarMult2} and $(\pi_R, \Val_R) \circ Tf = (f \circ \pi_X, df)$, where $df = \Val_R \circ Tf$ by~\eqref{eq:dCubeDef}, we have $\Tpart{1}\kappa_X \circ (Tf, v \circ \pi_X) = (\lambda_2)_X \circ \bigl( fv \circ \pi_X, (df)(v \circ \pi_X) \bigr)$; by Diagram~\eqref{diag:ScalarMult3} and the definition of $v'$, we have $\tau_X \circ \Tpart{2}\kappa_X \circ (f', Tv) = \kappa_{TX} \circ (f', \tau_X \circ Tv) = f'v'$. For the inner derivative, we obtain
\begin{equation*}
\begin{split}
  \iota_{(fv)'} \omega'
  &=
  (-1)^{n+1} \omega \circ T^{n-1}(fv)'
  \\
  &=
  (-1)^{n} \omega \circ T^{n-1}(\lambda_2)_X \circ
  T^{n-1}\bigl( fv \circ \pi_X, (df)(v \circ \pi_X) \bigr)
  \\
  &\quad{}+ (-1)^{n+1} \omega \circ T^{n-1}(f'v')
  \\
  &=
  \iota_{f' v'} \omega'
  \,,
\end{split}
\end{equation*}
where we have used the definition~\eqref{eq:CubeInnDer} of the inner derivative, Equation~\eqref{eq:fvPrime}, the $R$-linearity of $\omega$ at every position~\eqref{eq:CubForm1}, the antisymmetry of $\omega$, and, in the last step, Equation~\eqref{eq:FormLambdaZero02} of Lemma~\ref{lem:FormLambdaZero} applied to the $(n+1)$-form $\omega$ with $i = n$, that is, $\omega \circ T^{n-1}(\lambda_2)_X = 0$, which is where the assumption that $R$ has no $2$-torsion enters.
\end{proof}

\subsection{Properties of the inner derivative}

\begin{Proposition}
\label{prop:InnDerProp}
The inner derivative $\iota_v$ is a graded derivation of degree $-1$ of $\Omega_\mathrm{cub}(X)$ that satisfies
\begin{align}
\label{eq:InnDerProp03}
  \iota_{v+w} \omega
  &= \iota_v \omega + \iota_w \omega
  \\
\intertext{and, if $R$ has no 2-torsion,}
\label{eq:InnDerProp02}
  \iota_{fv} \omega
  &= f \wedge \iota_v \omega
  \\
\label{eq:InnDerProp01}
  \iota_w \iota_v \omega
  &= - \iota_v \iota_w \omega
\end{align}
for all $f \in \Omega^0_\mathrm{cub}(X)$ and $v, w \in \calX(X)$. For forms $\omega$ of degree at most $1$, Equation~\eqref{eq:InnDerProp02} holds without the assumption on $2$-torsion.
\end{Proposition}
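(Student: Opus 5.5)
The plan is to verify, in order: that $\iota_v\omega$ is again a differential cubical form, additivity, the graded Leibniz rule, $f$-linearity, and anticommutativity. The last two will be reduced to low degree using the prolongation $X' = TX$.

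\emph{Well-definedness and additivity.} Let $\omega \in \Omega^n_\mathrm{cub}(X)$ with $n \geq 1$; the case $n=1$ gives the function $\omega \circ v$. For $\sigma \in S_{n-1}$, naturality of the symmetric group action gives $\sigma_{TX}$-compatibility $\sigma_{TX}\circ T^{n-1}v = T^{n-1}v \circ \sigma_X$, where $S_{n-1}$ acts on the first $n-1$ factors of $T^n X = T^{n-1}(TX)$. Antisymmetry of $\omega$ therefore passes to $\iota_v\omega$. Condition~\eqref{eq:CubDiffDef} follows from
\begin{equation*}
  \Val_R \circ T\omega \circ T^n v \circ \lambda_{T^{n-2}X}
  = \Val_R \circ T\omega \circ \lambda_{T^{n-1}X} \circ T^{n-1}v
  = \omega \circ T^{n-1}v ,
\end{equation*}
using naturality of $\lambda$. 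For additivity~\eqref{eq:InnDerProp03}, iterate the recursion~\eqref{eq:AddPosRecurs} to get $+^{(n)}_{T^{n-1}X} = T^{n-1}+_X$. Then $T^{n-1}(v+w)$ is addition at the $n$-th position applied to $T^{n-1}v$ and $T^{n-1}w$, and \eqref{eq:CubForm1} at position $n$ gives the claim.

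\emph{Graded derivation.} Write $\iota_v(\alpha\wedge\beta)$ as a sum over $\sigma \in \Shuff(p,q)^{-1}$ of $(\alpha\cdot\beta)\circ\sigma_X\circ T^{p+q-1}v$. The last slot, into which $v$ is inserted, is sent by $\sigma$ either to position $p$ (the last $\alpha$-slot) or to position $p+q$ (the last $\beta$-slot). Since $\pi\circ v = \id$, the morphisms $T^p\pi^q$ and $\pi^pT^q$ composed with $T^{p+q-1}v$ just insert $v$ into the $\alpha$- or $\beta$-factor. Thus the two classes of unshuffles biject with $\Shuff(p-1,q)^{-1}$ and $\Shuff(p,q-1)^{-1}$, and they reproduce $\iota_v\alpha\wedge\beta$ and $(-1)^p\alpha\wedge\iota_v\beta$. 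Here the signs $(-1)^{n+1}$ in~\eqref{eq:CubeInnDer} combine with the lengths of the shuffles. This step is pure shuffle bookkeeping, and the sign check is where I expect most of the tedium.

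\emph{$f$-linearity and anticommutativity.} In degree $1$, $\omega\circ\kappa_X\circ(f,v) = f\,(\omega\circ v)$ by~\eqref{eq:CubForm2} at position $1$, so no $2$-torsion assumption is needed there. For degree $n+1 \geq 2$, pass to $\omega' \in \Omega^n_\mathrm{cub}(X')$ and argue by induction on degree:
\begin{equation*}
  (\iota_{fv}\omega)' = \iota_{(fv)'}\omega' = \iota_{f'v'}\omega' = f'\wedge'\iota_{v'}\omega' = f'\wedge'(\iota_v\omega)' = (f\wedge\iota_v\omega)' .
\end{equation*}
The first and fourth equalities use~\eqref{eq:InnPrime}, the second Lemma~\ref{lem:fPrimevPrime}, the third the induction hypothesis, and the last~\eqref{eq:InnPrime2}. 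Injectivity of~\eqref{eq:FormsTautMap} then concludes. In degree $1$ the identity \eqref{eq:InnPrime} becomes a direct computation.

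For~\eqref{eq:InnDerProp01} in degree $2$, apply $\omega$ to the definition~\eqref{eq:DeltaOrig} of $\delta(v,w)$ and use additivity at position $1$ to expand $-_{TX}$. Together with $\omega\circ\tau_X = -\omega$ this gives
\begin{equation*}
  \omega\circ\delta(v,w) = \omega\circ Tw\circ v + \omega\circ Tv\circ w .
\end{equation*}
On the other hand, $\omega\circ\delta(v,w) = \omega\circ(\lambda_2)_X\circ(w,[v,w]) = 0$ by~\eqref{eq:FormLambdaZero02}; this is where the absence of $2$-torsion enters. Hence $\iota_w\iota_v\omega = -\iota_v\iota_w\omega$ in degree $2$. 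Higher degrees follow by the same prolongation induction, using $(\iota_w\iota_v\omega)' = \iota_{w'}\iota_{v'}\omega'$ from~\eqref{eq:InnPrime} and injectivity of~\eqref{eq:FormsTautMap}.
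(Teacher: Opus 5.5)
Your proposal is correct and follows the paper's proof essentially step for step. It uses naturality of the $S_{n-1}$-action and of $\lambda$ for well-definedness, $+^{(n)}_{T^{n-1}X} = T^{n-1}{+_X}$ for additivity, and the split of $\Shuff(p,q)^{-1}$ according to where the last slot goes for the Leibniz rule. For \eqref{eq:InnDerProp02} and \eqref{eq:InnDerProp01} it uses the same prolongation induction, via \eqref{eq:InnPrime}, Lemma~\ref{lem:fPrimevPrime}, \eqref{eq:InnPrime2}, injectivity of \eqref{eq:FormsTautMap}, and the degree-$2$ base case $\omega\circ\delta(v,w)=0$ from Lemma~\ref{lem:FormLambdaZero}.

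The only cosmetic difference is in the Leibniz rule. You index the unshuffles sending the last slot into the $\alpha$-block directly by $\Shuff(p-1,q)^{-1}$. The paper instead factors out $\Swp_{q,p}$, passes to $\beta\cdot\alpha$ with $\rho\in\Shuff(q,p-1)^{-1}$, and then invokes graded commutativity. Both bookkeepings yield the same signs.
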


\begin{proof}
Let $\omega \in \Omega^n_\mathrm{cub}(X)$ and $v \in \calX(X)$. The symmetric group $S_{n-1}$ acts on $T^{n-1}$ by natural transformations. The naturality implies that
\begin{equation}
\label{eq:SigmaNatTv}
  T^{n-1}v \circ \sigma_X
  =
  \sigma_{TX} \circ T^{n-1}v
\end{equation}
for all $\sigma \in S_{n-1}$. Therefore,
\begin{equation*}
\begin{split}
  (\iota_v \omega) \circ \sigma_X
  &= (-1)^{n+1} \omega \circ T^{n-1}v \circ \sigma_X
  \\
  &= (-1)^{n+1} \omega \circ (\sigma T)_X \circ T^{n-1}v
  \\
  &= (-1)^{|\sigma|}(-1)^{n+1} \omega \circ T^{n-1}v
  \\
  &= (-1)^{|\sigma|} \iota_v \omega
  \,,
\end{split}
\end{equation*}
which shows that $\iota_v \omega$ is antisymmetric. For $n = 1$, $\iota_v\omega$ is a function and there is nothing more to show, so assume that $n \geq 2$. For $1 \leq i \leq n-1$, the $R$-linearity of $\omega$ at the $i$-th position implies the $R$-linearity of $\iota_v\omega$ at the $i$-th position. The naturality of the vertical lift implies that
\begin{equation*}
\begin{tikzcd}
T^{n-1} X
\ar[d, "\lambda_{T^{n-2}X}"']
\ar[r, "T^{n-1}v"]
&
T^n X
\ar[d, "\lambda_{T^{n-1}X}"]
\\
T^n X
\ar[r, "T^n v"']
&
T^{n+1} X
\end{tikzcd}
\end{equation*}
commutes. With this, we obtain
\begin{equation*}
\begin{split}
  \Val_R \circ T(\iota_v \omega) \circ \lambda_{T^{n-2}X}
  &=
  (-1)^{n+1}\Val_R \circ T(\omega \circ T^{n-1} v) \circ \lambda_{T^{n-2}X}
  \\
  &=
  (-1)^{n+1}\Val_R \circ T\omega \circ T^n v \circ \lambda_{T^{n-2}X}
  \\
  &=
  (-1)^{n+1}\Val_R \circ T\omega \circ \lambda_{T^{n-1}X} \circ T^{n-1} v
  \\
  &=
  (-1)^{n+1}\omega \circ T^{n-1} v
  \\
  &=
  \iota_v \omega
  \,,
\end{split}
\end{equation*}
where we have used that $\omega$ satisfies~\eqref{eq:CubDiffDef}. We conclude that $\iota_v \omega$ is a differential cubical $(n-1)$-form.

Next, we show that $\iota_v$ is a graded derivation. Let $\alpha \in \Omega^p_\mathrm{cub}(X)$ and $\beta \in \Omega^q_\mathrm{cub}(X)$; let $n \coloneqq p+q$. If $p = 0$ or $q = 0$, the derivation property is trivial. From now on, let $p, q \geq 1$. The set of $(p,q)$-unshuffles can be decomposed as follows. Either $\sigma(p+q) = p+q$, then $\sigma_X =  (\rho T)_X$, for a unique $\rho \in \Shuff(p,q-1)^{-1}$. Or $\sigma(p+q) = p$, then $\sigma_X =  (\Swp_{q,p})_X \circ (\rho T)_X$ for a unique $\rho \in \Shuff(q,p-1)^{-1}$. This shows that we can write the wedge product as
\begin{equation}
\label{eq:InDer10}
\begin{split}
  \alpha \wedge \beta
  &=
  \sum_{\mathclap{\rho \in \Shuff(q,p-1)^{-1}}}
  (-1)^{|\rho|+pq} (\alpha \cdot \beta) \circ (\Swp_{q,p})_X \circ (\rho T)_X
  +
  \sum_{\mathclap{\rho \in \Shuff(p,q-1)^{-1}}}
  (-1)^{|\rho|} (\alpha \cdot \beta) \circ  (\rho T)_X
  \\
  &=
  (-1)^{pq}
  \sum_{\mathclap{\rho \in \Shuff(q,p-1)^{-1}}}
  (-1)^{|\rho|} (\beta \cdot \alpha) \circ (\rho T)_X
  +
  \sum_{\mathclap{\rho \in \Shuff(p,q-1)^{-1}}}
  (-1)^{|\rho|} (\alpha \cdot \beta) \circ  (\rho T)_X
  \,.
\end{split}
\end{equation}
In order to compute the inner derivative of every term, we compute
\begin{equation}
\label{eq:InDer11}
\begin{split}
  (\alpha \cdot \beta) \circ (\rho T)_X \circ T^{p+q-1}v
  &=
  \Rmult\circ\bigl(\alpha\circ (T^p\pi^q)_X,\beta\circ(\pi^pT^q)_X\bigr)
  \circ T^{p+q-1}v \circ \rho_X
  \\
  &=
  \Rmult\circ\bigl(
    \alpha \circ (T^p \pi^{q-1})_X,
    \beta \circ T^{q-1}v \circ (\pi^p T^{q-1})_X \bigr) \circ \rho_X
  \\
  &=
  (-1)^{q+1} \bigl(\alpha \cdot \iota_v \beta\bigr) \circ \rho_X
  \,,
\end{split}
\end{equation}
where we have used Equation~\eqref{eq:SigmaNatTv}, the definition of $\alpha \cdot \beta$, the relation $(T^p\pi^q)_X \circ T^{p+q-1}v = T^p\bigl((\pi^q)_X \circ T^{q-1}v\bigr) = (T^p\pi^{q-1})_X$, the relation $(\pi^pT^q)_X \circ T^{p+q-1}v = T^{q-1}v \circ (\pi^pT^{q-1})_X$, which follows from the naturality of $\pi^p$, and the definition of $\iota_v \beta$. Analogously,
\begin{equation}
\label{eq:InDer12}
  (\beta \cdot \alpha) \circ (\rho T)_X \circ T^{p+q-1}v
  =
  (-1)^{p+1} \bigl(\beta \cdot \iota_v \alpha\bigr) \circ \rho_X
  \,.
\end{equation}
By inserting \eqref{eq:InDer11} and \eqref{eq:InDer12} into \eqref{eq:InDer10}, we obtain
\begin{equation*}
\begin{split}
  \iota_v (\alpha \wedge \beta)
  &=
  (-1)^{p+q+1} (\alpha \wedge \beta) \circ T^{p+q-1}v
  \\
  &=
  (-1)^{p+q+1}(-1)^{pq}
  \sum_{\mathclap{\rho \in \Shuff(q,p-1)^{-1}}}
  (-1)^{|\rho|} (-1)^{p+1} (\beta \cdot \iota_v \alpha) \circ \rho_X
  \\
  &\quad{}
  +
  (-1)^{p+q+1}
  \sum_{\mathclap{\rho \in \Shuff(p,q-1)^{-1}}}
  (-1)^{|\rho|} (-1)^{q+1} (\alpha \cdot \iota_v \beta) \circ \rho_X
  \\
  &=
  (-1)^{pq} (-1)^q \beta \wedge \iota_v \alpha
  +
  (-1)^p \alpha \wedge \iota_v \beta
  \\
  &=
  (-1)^{pq} (-1)^q (-1)^{(p-1)q} \iota_v \alpha \wedge \beta
  +
  (-1)^p \alpha \wedge \iota_v \beta
  \\
  &=
  \iota_v \alpha \wedge \beta
  + (-1)^p \alpha \wedge \iota_v \beta
  \,,
\end{split}
\end{equation*}
which proves that $\iota_v$ is a graded derivation.

Equation~\eqref{eq:InnDerProp03} follows from the $R$-linearity of $\omega$ at the $n$-th position: by the recursion relation~\eqref{eq:AddPosRecurs} we have $+^{(n)}_{T^{n-1}X} = T^{n-1}{+_X}$, so that
\begin{equation*}
\begin{split}
  \omega \circ T^{n-1}(v+w)
  &= \omega \circ {+^{(n)}_{T^{n-1}X}} \circ (T^{n-1}v, T^{n-1}w)
  \\
  &= \Radd \circ (\omega \circ T^{n-1}v, \omega \circ T^{n-1}w)
  \,,
\end{split}
\end{equation*}
which is~\eqref{eq:InnDerProp03} after multiplication by $(-1)^{n+1}$.

We proceed to prove Equation~\eqref{eq:InnDerProp02}. For a form $\omega$ of degree $n=0$, we have $\iota_{fv} \omega = 0$ for degree reasons. It follows that \eqref{eq:InnDerProp02} is trivially satisfied. If $\omega$ has degree $n=1$, we have $\iota_{fv} \omega = \omega \circ \kappa_X \circ (f,v) = \Rmult \circ (f, \omega \circ v) = f \wedge \iota_v\omega$ by the $R$-linearity~\eqref{eq:CubForm2} of $\omega$ at the first position. Neither case requires an assumption on $2$-torsion. The case $n > 1$ will be proved by induction over $n$, assuming that $R$ has no $2$-torsion.

Assume that~\eqref{eq:InnDerProp02} holds for forms of degree $n \geq 1$ on all objects, and let $\omega \in \Omega^{n+1}_\mathrm{cub}(X)$. Let $f' = f \circ \pi_X: TX \to R$. Since $\omega' \in \Omega^n_\mathrm{cub}(X')$, we obtain
\begin{equation*}
\begin{split}
  \iota_{fv}\omega
  &=
  (\iota_{fv} \omega)'
  = \iota_{(fv)'} \omega'
  = \iota_{f' v'} \omega'
  = f' \wedge' \iota_{v'}\omega'
  = f' \wedge' (\iota_v \omega)'
  \\
  &= f \wedge \iota_v \omega
  \,,
\end{split}
\end{equation*}
where we have used Equation~\eqref{eq:InnPrime}, Equation~\eqref{eq:fPrimevPrime} of Lemma~\ref{lem:fPrimevPrime}, the induction hypothesis, and Equation~\eqref{eq:InnPrime2} for the $n$-form $\iota_v\omega$. We conclude by induction over $n$ that Equation~\eqref{eq:InnDerProp02} holds for forms of all degrees.

Finally, we prove Equation~\eqref{eq:InnDerProp01}. Let $w\colon X \to TX$ be another vector field. For a form $\omega$ of degree $n<2$ we have $\iota_w \iota_v \omega = 0$ for degree reasons, so \eqref{eq:InnDerProp01} is trivially satisfied. Let $\omega$ be a form of degree $n=2$. We compute
\begin{equation}
\label{eq:SingToAlgForm03}
\begin{split}
  (\iota_v \iota_w + \iota_w \iota_v)\omega
  &=
  - (\omega \circ Tw \circ v
   + \omega \circ Tv \circ w)
  \\
  &=
  - (\omega \circ Tw \circ v - \omega \circ \tau_X \circ Tv \circ w)
  \\
  &=
  - \omega\circ -_{TX} \circ
  (Tw \circ v, \tau_X \circ Tv \circ w)
  \\
  &=
  - \omega \circ \delta(v,w)
  \\
  &=
  - \omega \circ (\lambda_2)_X \circ (w,[v,w])
  \\
  &=
  0
  \,,
\end{split}
\end{equation}
where we have used the definition~\eqref{eq:CubeInnDer} of the inner derivative, the antisymmetry of~$\omega$, the additivity of~$\omega$ at the first position~\eqref{eq:CubForm1} together with the definition~\eqref{eq:Difference} of the difference, Equation~\eqref{eq:DeltaOrig}, Equation~\eqref{eq:deltaBracketRel}, and finally Equation~\eqref{eq:FormLambdaZero02} of Lemma~\ref{lem:FormLambdaZero} using that $R$ has no 2-torsion. This shows that Equation~\eqref{eq:InnDerProp01} holds for 2-forms. Assume that~\eqref{eq:InnDerProp01} holds for forms of degree $n$ on all objects, and let $\omega \in \Omega^{n+1}_\mathrm{cub}(X)$. By applying Equation~\eqref{eq:InnPrime} first to $\omega$ and then to the $n$-form $\iota_v\omega$, we obtain
\begin{equation*}
  \iota_{w'}\iota_{v'}\omega'
  = \iota_{w'}(\iota_v\omega)'
  = (\iota_w \iota_v\omega)'
  \,.
\end{equation*}
Together with the analogous relation for $v$ and $w$ exchanged, this implies
\begin{equation*}
  \bigl( (\iota_w \iota_v + \iota_v \iota_w)  \omega \bigr)'
  =
  (\iota_{w'} \iota_{v'} + \iota_{v'} \iota_{w'}) \omega'
  = 0
  \,,
\end{equation*}
where the last equality is the induction hypothesis applied to the $n$-form $\omega'$ on $X'$ and the vector fields $v'$, $w'$. Since the map~\eqref{eq:FormsTautMap} is injective, it follows that $(\iota_w \iota_v + \iota_v \iota_w)\omega = 0$. We conclude by induction over $n$ that~\eqref{eq:InnDerProp01} holds in all degrees.
\end{proof}

\subsection{Properties of the Lie derivative}

We recall that in any Cartan calculus the Lie derivative with respect to a vector field $v \in \calX(X)$ is defined by the graded commutator
\begin{equation*}
\begin{split}
  \Lie_v \coloneqq [\iota_v, d] = \iota_v d + d \iota_v
  \,.
\end{split}
\end{equation*}
Since graded derivations are closed under graded commutators, $\Lie_v$ is a graded derivation of $\Omega_\mathrm{cub}(X)$.

\begin{Proposition}
\label{prop:LieInnDerBracket}
The bracket of the Lie derivative and the inner derivative is
\begin{equation}
\label{eq:LieInnDerBracket}
  [\Lie_v, \iota_w] = \iota_{[v,w]}
\end{equation}
for all vector fields $v, w \in \calX(X)$.
\end{Proposition}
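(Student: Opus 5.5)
We prove $[\Lie_v,\iota_w]\omega = \iota_{[v,w]}\omega$ by induction on the degree $n$ of $\omega\in\Omega^n_\mathrm{cub}(X)$, using the prolongation machinery (Lemma~\ref{lem:ProlongBracket} and Equation~\eqref{eq:InnPrime}) to reduce the degree. Since $\Lie_v$ has degree $0$ and $\iota_w$ has degree $-1$, the bracket is $[\Lie_v,\iota_w]=\Lie_v\iota_w-\iota_w\Lie_v$. Both sides are graded derivations of degree $-1$: the left side because graded derivations are closed under graded commutators, the right side by Proposition~\ref{prop:InnDerProp}. Both sides vanish on $\Omega^0$ for degree reasons.

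\emph{Degree one.} For $n=1$ we evaluate on a $1$-form $\omega$ directly. We have $\iota_w\omega=\omega\circ w$, so
\begin{equation*}
  \Lie_v\iota_w\omega = v\cdot(\omega\circ w) = \Val_R\circ T\omega\circ Tw\circ v .
\end{equation*}
Next, $\iota_w\Lie_v\omega = \iota_w\iota_v d\omega + \iota_w d\iota_v\omega$. Expanding $d\omega = \Val_R\circ T\omega - \Val_R\circ T\omega\circ\tau_X$ gives
\begin{equation*}
  \iota_w\iota_v d\omega = -(d\omega)\circ Tv\circ w
  = -\Val_R\circ T\omega\circ Tv\circ w + \Val_R\circ T\omega\circ\tau_X\circ Tv\circ w ,
\end{equation*}
and $\iota_w d\iota_v\omega = w\cdot(\omega\circ v) = \Val_R\circ T\omega\circ Tv\circ w$. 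The two terms $\Val_R\circ T\omega\circ Tv\circ w$ cancel, so
\begin{equation*}
  [\Lie_v,\iota_w]\omega
  = \Val_R\circ T\omega\circ\bigl(Tw\circ v\bigr) - \Val_R\circ T\omega\circ\bigl(\tau_X\circ Tv\circ w\bigr).
\end{equation*}
Since $\omega$ is differential, $\Val_R\circ T\omega$ is additive with respect to $-_{TX}$ by the left diagram of \eqref{eq:DiffObj1}. The difference therefore equals $\Val_R\circ T\omega\circ\delta(v,w)=\Val_R\circ T\omega\circ(\lambda_2)_X\circ(w,[v,w])$. It remains to show $\Val_R\circ T\omega\circ(\lambda_2)_X=\omega\circ\pr_2$. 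To do so, we write $\lambda_2=\tau\circ{+T}\circ(T0\times_0\lambda)$ and use \eqref{eq:TEtaEtaTtau}-type manipulations together with $\Val_R\circ T\omega\circ\lambda=\omega$ from \eqref{eq:CubDiffDef}. On the $T0$ summand, $T\omega\circ T0_X=T(0_R\circ\omega)$-type terms are killed by $\Val_R$ via \eqref{eq:DiffObj1}; alternatively this step is the analogue of \eqref{eq:EtaTEtalambda2}. This yields $\omega\circ[v,w]=\iota_{[v,w]}\omega$.

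\emph{Induction step.} Assume the statement holds in degree $n\ge1$ on all objects, and let $\omega\in\Omega^{n+1}_\mathrm{cub}(X)$. By injectivity of \eqref{eq:FormsTautMap}, it suffices to compare primes. By Equation~\eqref{eq:InnPrime} we have $(\iota_w\omega)'=\iota_{w'}\omega'$, and by Lemma~\ref{lem:ProlongBracket} we have $(\iota_{[v,w]}\omega)'=\iota_{[v',w']}\omega'$. So the key auxiliary claim is that prolongation intertwines Lie derivatives: $(\Lie_v\omega)'=\Lie_{v'}\omega'$ for $\omega$ of degree $\ge2$. Given this, we get $([\Lie_v,\iota_w]\omega)'=[\Lie_{v'},\iota_{w'}]\omega'=\iota_{[v',w']}\omega'$ by the induction hypothesis, and the step closes.

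\emph{Main obstacle.} Proving the auxiliary claim requires comparing $d$ with the prime operation. The prime of $d\omega$ on $T^{n+1}TX$ sums over $(1,n+1)$-unshuffles, whereas $d'\omega'$ sums only over $(1,n)$-unshuffles. The missing term is the one in which the derivative direction is moved to the last slot, i.e.\ it involves $\tau$ acting on the innermost $TX$ factor. We would split
\begin{equation*}
  (d\omega)' = d'\omega' + (-1)^{n+1}\,\Val_R\circ T\omega\circ(\text{last-slot swap}),
\end{equation*}
and show that this extra term is exactly compensated in $\iota_v d+d\iota_v$. The compensation comes from the difference between $\iota_{v'}$ applied via $T^n v'=T^n\tau\circ T^{n+1}v$ and the term $d'(\iota_{v'}\omega')$, using naturality of $\tau$ and the braid relations. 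We expect this bookkeeping, possibly needing Lemma~\ref{lem:FormLambdaZero} to discard vertical contributions, to be the hardest part. If it fails, we fall back to a direct verification in all degrees via the same $\delta(v,w)$ computation as in degree one, applied in the last slot $T^{n-1}$.
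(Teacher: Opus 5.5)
Your architecture is the paper's: a direct degree-one computation through $\delta(v,w)$ and $\lambda_2$, then an induction via prolongation that rests on the intertwining $(\Lie_v\omega)'=\Lie_{v'}\omega'$. The gap is that this intertwining, on which the whole induction step depends, is never established. You announce a splitting, say you expect the extra term to be compensated, and keep a fallback in case it is not.

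The paper closes this by first evaluating $\Lie_v=\iota_v d+d\iota_v$ in closed form. In $\iota_v d\omega$, every summand except the one for the unshuffle $\Swp_{n,1}$ commutes past $T^n v$ by naturality of the $S_n$-action. (That unshuffle moves the last slot, filled by $T^n v$, into the derivative slot.) Each such summand cancels against the corresponding summand of $d\iota_v\omega$, which leaves $\Lie_v\omega=\Val_R\circ T\omega\circ(\Swp_{n,1})_X\circ T^n v$ as in \eqref{eq:LieDerFormula}. With $T^n v'=(T^n\tau)_X\circ T^{n+1}v$ and $\Swp_{n,1}T\circ T^n\tau=\Swp_{n+1,1}$, the intertwining is then one line.

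Your own splitting also closes, and more cheaply than you fear. For $\omega\in\Omega^m_\mathrm{cub}(X)$ with $m\geq 2$, put $E(\omega)\coloneqq\Val_R\circ T\omega\circ(\tau_{12}\circ\cdots\circ\tau_{m,m+1})_X$, so that $(d\omega)'=d'\omega'+(-1)^m E(\omega)$. Applying this to $\omega$ and to $\iota_v\omega$, and using \eqref{eq:InnPrime}, gives $(\Lie_v\omega)'-\Lie_{v'}\omega'=(-1)^m\bigl(\iota_{v'}E(\omega)-E(\iota_v\omega)\bigr)$. Both terms in the bracket equal $(-1)^{m+1}\Val_R\circ T\omega\circ T^m v\circ(\tau_{12}\circ\cdots\circ\tau_{m-1,m})_X$. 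This follows from $T^{m-1}v'=(\tau_{m,m+1})_X\circ T^m v$, $\tau\circ\tau=\id$, and naturality of the $S_m$-action on $T^m$.

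Three smaller points.

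\emph{Degree range.} The intertwining is needed in degree $1$ as well, not only in degree $\geq 2$. Passing from degree $1$ to degree $2$, you apply it to the $1$-form $\iota_w\omega$. For a $1$-form $\alpha$ both sides read $\Val_R\circ T\alpha\circ\tau_X\circ Tv$, so this case is checked directly.

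\emph{Unneeded tools.} Neither braid relations nor Lemma~\ref{lem:FormLambdaZero} are needed. You should not invoke the latter, because the proposition holds without any $2$-torsion hypothesis.

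\emph{Degree-one step.} The identity $\Val_R\circ T\omega\circ(\lambda_2)_X=\omega\circ\pr_2$ is not an \eqref{eq:TEtaEtaTtau}-type manipulation, and $T(0_R\circ\omega)$ does not typecheck as written. The argument goes as follows:
\begin{enumerate}
\item Use the additivity of $\omega$ at the first position (Lemma~\ref{lem:DiffImpliesLin}) to push $T\omega$ through $\tau_X\circ{+_{TX}}=T{+_X}\circ(\tau_X\times\tau_X)$, the right diagram of \eqref{eq:TanFun2}, together with $\tau\circ T0=0T$ and $\tau\circ\lambda=\lambda$.
\item The left diagram of \eqref{eq:DiffObj2} then splits the sum.
\item The zero summand is $\Val_R\circ 0_R\circ\omega=\Rzero$ by \eqref{eq:DiffObj1}.
\item The remaining summand is $\omega\circ\pr_2$ by \eqref{eq:CubDiffDef}.
\end{enumerate}
Similarly, the additivity of $\Val_R\circ T\omega$ with respect to $-_{TX}$ comes from naturality of $+$ and \eqref{eq:DiffObj1}, not from $\omega$ being differential.
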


\begin{proof}
From the definition of the inner derivative~\eqref{eq:CubeInnDer} and the differential~\eqref{eq:dCubeDef}, we obtain for a vector field $v$ and an $n$-form $\omega$ the formula
\begin{equation}
\label{eq:LieInnDer01}
\begin{split}
  \iota_v d\omega
  &=
  (-1)^n \sum_{\mathclap{\sigma \in \Shuff(1,n)^{-1} }}
  (-1)^{|\sigma|}\Val_R \circ T\omega \circ \sigma_X \circ T^n v
  \\
  &=
  \Val_R \circ T\omega \circ (\Swp_{n,1})_X \circ T^n v
  +
  (-1)^n \sum_{\mathclap{\sigma \in \Shuff(1,n-1)^{-1} }}
  (-1)^{|\sigma|}\Val_R \circ T\omega \circ (\sigma T)_X \circ T^n v
  \\
  &=
  \Val_R \circ T\omega \circ (\Swp_{n,1})_X \circ T^n v
  +
  (-1)^n \sum_{\mathclap{\sigma \in \Shuff(1,n-1)^{-1} }}
  (-1)^{|\sigma|}\Val_R \circ T\omega \circ T^n v \circ \sigma_X
  \,,
\end{split}
\end{equation}
where we have decomposed the sum over $\Shuff(1,n)^{-1}$ as in~\eqref{eq:InDer10} and, in the last step, used the naturality of $\sigma_X$. The inner derivative followed by the differential is given by
\begin{equation}
\label{eq:LieInnDer02}
  d \iota_v \omega
  =
  (-1)^{n+1} \sum_{\mathclap{\sigma \in \Shuff(1,n-1)^{-1} }}
  (-1)^{|\sigma|}\Val_R \circ T\omega \circ T^n v \circ \sigma_X
  \,.
\end{equation}
Inserting~\eqref{eq:LieInnDer01} and \eqref{eq:LieInnDer02} into $\Lie_v \omega = \iota_v d\omega + d \iota_v \omega$, we obtain the simple formula
\begin{equation}
\label{eq:LieDerFormula}
  \Lie_v \omega
  =
    \Val_R \circ T\omega \circ
    (\Swp_{n,1})_X \circ T^n v
  \,.
\end{equation}
In particular, for a function $f: X \to R$, viewed as a form of degree $0$, we retrieve the action of vector fields,
\begin{equation*}
  \Lie_v f = \Val_R \circ Tf \circ v = v \cdot f
  \,.
\end{equation*}

For a form of degree $0$, the left side of~\eqref{eq:LieInnDerBracket} vanishes for degree reasons. The first non-trivial case is that of a 1-form $\omega \in \Omega^1_\mathrm{cub}(X)$. Equation~\eqref{eq:LieDerFormula} yields
\begin{equation*}
  \Lie_v \omega
  =  \Val_R \circ T\omega \circ \tau_X \circ Tv
  \,.
\end{equation*}
With this, we get
\begin{equation}
\label{eq:LieInnDerBrack03}
\begin{split}
  [\Lie_v, \iota_w]\omega
  &=
  (\Lie_v \iota_w - \iota_w \Lie_v)\omega
  \\
  &=
  \Lie_v (\omega \circ w)
  - \iota_w (\Val_R \circ T\omega \circ \tau_X \circ Tv)
  \\
  &=
    \Val_R \circ T\omega \circ Tw \circ v
  - \Val_R \circ T\omega \circ \tau_X \circ Tv \circ w
  \\
  &=
    \Val_R \circ T\omega \circ \bigl(
    -_{TX} \circ (Tw \circ v, \tau_X \circ Tv \circ w)
    \bigr)
  \\
  &=
    \Val_R \circ T\omega \circ (\lambda_2)_X \circ
    (w, [v,w])
  \,.
\end{split}
\end{equation}
To simplify this further, we compute
\begin{equation}
\label{eq:lambda2inForm}
\begin{split}
  \Val_R \circ T\omega \circ (\lambda_2)_X
  &=
  \Val_R \circ T\omega \circ
  \tau_X \circ +_{TX} \circ (T0_X \times \lambda_X)
  \\
  &=
  \Val_R \circ T\omega \circ
  {T+_X} \circ (\tau_X \times \tau_X) \circ
  (T0_X \times \lambda_X)
  \\
  &=
  \Val_R \circ T(\omega \circ +_X) \circ
  (0_{TX} \times \lambda_X)
  \\
  &=
  \Val_R \circ T\bigl(\Radd \circ (\omega \times \omega) \bigr)
  \circ (0_{TX} \times \lambda_X)
  \\
  &=
  \Val_R \circ T\Radd \circ (T\omega \times T\omega) \circ
  (0_{TX} \times \lambda_X)
  \\
  &=
  \Radd \circ (\Val_R \times \Val_R) \circ
  (T\omega \times T\omega) \circ
  (0_{TX} \times \lambda_X)
  \\
  &=
  \Radd \circ
  (\Val_R \circ 0_R \circ \omega,
   \Val_R \circ T\omega \circ \lambda_X)
  \\
  &=
  \Val_R \circ T\omega \circ \lambda_X \circ \pr_2
  \,,
\end{split}
\end{equation}
where we have used the defining Equation~\eqref{eq:VertLiftExt} of~$\lambda_2$, Diagram~\eqref{eq:TanFun2}, the left diagram of~\eqref{eq:TanFun5}, the functoriality of~$T$, that~$\omega$ is additive~\eqref{eq:CubForm1}, the left diagram of~\eqref{eq:DiffObj2} for~$A=R$, and finally the properties of the additive structure of~$R$. By inserting~\eqref{eq:lambda2inForm} into~\eqref{eq:LieInnDerBrack03}, we get
\begin{equation*}
\begin{split}
  [\Lie_v, \iota_w]\omega
  &=
    \Val_R \circ T\omega \circ \lambda_X \circ [v,w]
  \\
  &=
    \omega \circ [v,w]
  \\
  &=
  \iota_{[v,w]} \omega
  \,,
\end{split}
\end{equation*}
where we have used that $\omega$ is differential~\eqref{eq:CubDiffDef}.

For forms of degree $>1$ we will prove~\eqref{eq:LieInnDerBracket} by induction. Assume that~\eqref{eq:LieInnDerBracket} holds for all $X$ and forms up to degree $n$, $n \geq 1$. Let $\omega$ be an $(n+1)$-form on $X$, which we can view as an $n$-form on $X' \coloneqq TX$. Let $v\colon X \to TX$ be a vector field, so $v' \coloneqq \tau_X \circ Tv$ is a vector field on $X'$. Using formula~\eqref{eq:LieDerFormula} for the Lie derivative, we obtain
\begin{equation*}
\begin{split}
  \Lie_{v'} \omega'
  &=
  \Val_R \circ T\omega' \circ
  (\Swp_{n,1})_{X'} \circ T^n v'
  \\
  &=
  \Val_R \circ T\omega \circ
  (\Swp_{n,1})_{TX} \circ T^n(\tau_X \circ Tv)
  \\
  &=
  \Val_R \circ T\omega \circ
  (\Swp_{n,1}T \circ T^n \tau)_X \circ T^{n+1}v
  \\
  &=
  \Val_R \circ T\omega \circ
  (\Swp_{n+1,1})_X \circ T^{n+1}v
  \\
  &= \Lie_v \omega
  \,,
\end{split}
\end{equation*}
where we have used the definition of~$v'$, the functoriality of~$T$, and the relation $\Swp_{n,1}T \circ T^n\tau = \Swp_{n+1,1}$ for the block shuffle~\eqref{eq:ijShuffle}. With this relation we compute
\begin{equation}
\label{eq:LieDerInn05}
\begin{split}
  [\Lie_v, \iota_w]\omega
  &=
  \Lie_v \iota_w \omega - \iota_w \Lie_v \omega
  \\
  &=
  \Lie_{v'} (\iota_w \omega)'
  - \iota_{w'} (\Lie_v \omega)'
  \\
  &=
  \Lie_{v'} \iota_{w'} \omega'
  - \iota_{w'} \Lie_{v'} \omega'
  \\
  &=
  [\Lie_{v'}, \iota_{w'}] \omega'
  \\
  &= \iota_{[v',w']} \omega'
  \\
  &= \iota_{[v,w]'} \omega'
  \\
  &= \iota_{[v,w]} \omega
  \,,
\end{split}
\end{equation}
where we have also used Equation~\eqref{eq:InnPrime} for the inner derivatives and Lemma~\ref{lem:ProlongBracket}. The proof follows by induction over $n$.
\end{proof}

In Proposition~\ref{prop:InnDerProp} we have proved that the inner derivative satisfies the relation~\eqref{eq:InnDerOlinear} of a Cartan calculus, as well as the first of the bracket relations~\eqref{eq:CartanCalcBrackets}. In Proposition~\ref{prop:LieInnDerBracket}, we have proved the second bracket relation. By Remark~\ref{rmk:CartanRedund}, the other two relations are implied. We conclude that the conditions of a Cartan calculus in the sense of Definition~\ref{def:CartanCalc} are all satisfied, which finishes the proof of Statement~(ii) of Theorem~\ref{thm:CartanCube}.

\section{Relation to K\"ahler forms, Lie-Rinehart forms, and de Rham forms}

\label{sec:Comparison}

\subsection{The initial Cartan calculus of K\"ahler forms}

On a Rosick\'y tangent category, we have a natural notion of vector fields with Lie bracket. The new feature on a cartesian tangent category with scalar $R$-multiplication is the action~\eqref{eq:VecFieldFunc} of the Lie algebra of vector fields $\calX(X) = \Gamma(X,TX)$ on functions $\calO_X = \Hom(X,R)$ by derivations (Proposition~\ref{prop:VecActFunc}).

Let us denote by $\Omega^1_\text{K\"ah}(X)$ the $\calO_X$-module of \textbf{K\"ahler forms}, that is, the $\calO_X$-module generated by the symbols $d_\mathrm{K}f$ for all $f \in \calO_X$ modulo the relations $d_\mathrm{K}(rf) = r\, d_\mathrm{K}f$, $d_\mathrm{K}(f + g) = d_\mathrm{K}f + d_\mathrm{K}g$, and $d_\mathrm{K}(fg) = (d_\mathrm{K}f) g + f(d_\mathrm{K}g)$ for all $r \in \Hom(*,R)$ and $f,g \in \calO_X$. The module of K\"ahler forms corepresents $R$-linear derivations of $\calO_X$ with values in an $\calO_X$-module $M$. In particular, we have
\begin{equation*}
  \Der_R(\calO_X) \cong \Hom_{\calO_X}(\Omega^1_\text{K\"ah}(X), \calO_X)
  \,.
\end{equation*}
Under this isomorphism the derivation $f \mapsto v\cdot f$ for $v \in \calX(X)$ is sent to the map $d_\mathrm{K}f \mapsto v\cdot f$ that we denote by $\iota^\mathrm{K}_v$. That is, we define
\begin{equation}
\label{eq:KaehlerInn1}
  \iota^\mathrm{K}_v (d_\mathrm{K}f)
  \coloneqq v \cdot f
\end{equation}
for all vector fields $v$.

Let $\Omega_\text{K\"ah}(X)$ denote the free commutative graded $\calO_X$-algebra generated by the K\"ahler differentials in degree 1. Then we have a unique graded differential on $\Omega_\text{K\"ah}(X)$, defined by
\begin{equation}
\label{eq:KaehlerDiff}
  d_\mathrm{K}(f) = d_\mathrm{K}f
  \,,\qquad
  d_\mathrm{K}(d_\mathrm{K}f) = 0
\end{equation}
on the generators of $\Omega_\text{K\"ah}(X)$ as a ring. In the same vein, $\iota^\mathrm{K}$ extends to a unique graded derivation of $\Omega_\text{K\"ah}(X)$, defined by~\eqref{eq:KaehlerInn1} and
\begin{equation}
\label{eq:KaehlerInn2}
  \iota^\mathrm{K}_v(f) = 0
  \,,
\end{equation}
which holds for degree reasons.

\begin{Proposition}
\label{prop:KaehlerCartan}
Let $X$ be an object in a cartesian tangent category with scalar $R$-multiplication. Then the Lie algebra of vector fields $\calX(X)$ and the commutative graded $\calO_X$-algebra of K\"ahler forms $\Omega_\text{\rm K\"ah}(X)$ together with the differential $d_\mathrm{K}$ and the inner derivatives $\iota^\mathrm{K}_v$ constitute a Cartan calculus in the sense of Definition~\ref{def:CartanCalc}.
\end{Proposition}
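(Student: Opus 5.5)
Since $\Omega_\text{K\"ah}(X)$ is generated as a commutative graded $\calO_X$-algebra by $\calO_X$ in degree $0$ and the symbols $d_\mathrm{K}f$ in degree $1$, every operator in Definition~\ref{def:CartanCalc} that is a graded derivation is determined by its values on these generators. The plan is to check each axiom on generators only and then extend multiplicatively.

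\textbf{Well-definedness.} I will first confirm that the structure is well defined. $\calO_X$ is a commutative ring and $\calX(X)$ is an $\calO_X$-module via~\eqref{eq:fvDef}, with Lie bracket by Theorem~\ref{thm:TangentLieAlgd}. The map $f \mapsto v\cdot f$ is an $R$-linear derivation by Proposition~\ref{prop:VecActFunc}, so $\iota^\mathrm{K}_v$ is a well-defined $\calO_X$-linear map $\Omega^1_\text{K\"ah}(X) \to \calO_X$. It therefore extends uniquely to a degree $-1$ graded derivation; its square vanishes on generators and is a derivation, hence is zero. The differential $d_\mathrm{K}$ is the standard de Rham differential of the K\"ahler algebra, with $d_\mathrm{K}^2=0$ checked on generators.

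\textbf{Linearity of $\iota$.} The relation $\iota^\mathrm{K}_{v+w} = \iota^\mathrm{K}_v + \iota^\mathrm{K}_w$ follows from~\eqref{eq:VecActFuncB}. The relation $\iota^\mathrm{K}_{fv} = f\,\iota^\mathrm{K}_v$ reduces on $d_\mathrm{K}g$ to $(fv)\cdot g = f(v\cdot g)$. This is exactly where a short tangent-category computation is needed: I will use $(fv)\cdot g = \Val_R\circ Tg\circ\kappa_X\circ(f,v)$, the naturality of $\kappa$ (so $Tg\circ\kappa_X = \kappa_R\circ(\id_R\times Tg)$), and the $R$-linearity of $\Val_R$ as in the proof of Lemma~\ref{lem:DiffImpliesLin}. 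Both sides of each relation are derivations, respectively $\calO_X$-linear, so agreement on generators suffices.

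\textbf{Bracket relations.} By Remark~\ref{rmk:CartanRedund}, it suffices to verify $[\iota_v,\iota_w]=0$ and $[\Lie_v,\iota_w]=\iota_{[v,w]}$.
\begin{itemize}
\item The first holds because $[\iota_v,\iota_w]$ is a derivation of degree $-2$, and such a derivation kills all generators.
\item For the second, both sides are derivations of degree $-1$ (graded commutators of derivations are derivations). They vanish on $\calO_X$ for degree reasons, so it remains to compare them on $d_\mathrm{K}f$.
\end{itemize}
There $\Lie_v = \iota_v d_\mathrm{K} + d_\mathrm{K}\iota_v$ gives $\Lie_v g = v\cdot g$ and $\Lie_v d_\mathrm{K}f = d_\mathrm{K}(v\cdot f)$. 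Hence
\begin{equation*}
[\Lie_v,\iota_w]\,d_\mathrm{K}f = v\cdot(w\cdot f) - w\cdot(v\cdot f) = [v,w]\cdot f = \iota_{[v,w]}d_\mathrm{K}f
\end{equation*}
by~\eqref{eq:VecActFunc1}.

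\textbf{Main obstacle.} Once Proposition~\ref{prop:VecActFunc} is available the argument is formal, with two points requiring care. The first is the $\calO_X$-linearity $(fv)\cdot g = f(v\cdot g)$, which uses the scalar multiplication axioms. The second is the bookkeeping that every operator in question is a graded derivation, so that checking on generators is legitimate. In particular, no torsion hypothesis on $R$ is needed here, in contrast to Theorem~\ref{thm:CartanCube}(ii).
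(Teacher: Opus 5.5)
Your proposal is correct and is the standard argument that the paper's one-line proof points to: every operator involved is a graded derivation, so each axiom reduces to a check on the generators $f$ and $d_\mathrm{K}f$, and Remark~\ref{rmk:CartanRedund} supplies the remaining bracket relations. One step is cited too quickly: Proposition~\ref{prop:VecActFunc} does not state that $f\mapsto v\cdot f$ is $R$-linear, yet this is what makes $\iota^\mathrm{K}_v$ respect the relation $d_\mathrm{K}(rf)=r\,d_\mathrm{K}f$; by the Leibniz rule it amounts to $v$ annihilating the constant functions $X\to *\xrightarrow{r}R$, which follows from $T*\cong *$, the naturality of $0$, and the right diagram of~\eqref{eq:DiffObj1}.
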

\begin{proof}
The proof follows from standard arguments, known from the Cartan calculus of de Rham forms on a smooth manifold.
\end{proof}

The Cartan calculus of K\"ahler forms is initial in the following sense.

\begin{Proposition}
\label{prop:KaehlerInitial}
Let $X$ be an object in a cartesian tangent category with scalar $R$-multiplication. Let $(\calX(X), \Omega(X), d, \iota)$ be a Cartan calculus in the sense of Definition~\ref{def:CartanCalc}. Then there is a unique morphism of graded $\calO_X$-algebras $\phi: \Omega_{\text{\rm K\"ah}}(X) \to \Omega(X)$ such that
\begin{equation*}
  \phi\bigl( d_\mathrm{K} \omega \bigr)
  = d \bigl( \phi(\omega) \bigr)
  \,,\qquad
  \phi\bigl( \iota^\mathrm{K}_v \omega \bigr)
  = \iota_v \bigl( \phi(\omega) \bigr)
\end{equation*}
for all $\omega \in \Omega_\text{\rm K\"ah}(X)$ and $v \in \calX(X)$.
\end{Proposition}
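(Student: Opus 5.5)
The plan is to build $\phi$ from the universal property of Kähler differentials and then extend it by the freeness of $\Omega_\text{\rm K\"ah}(X)$. In degree $0$ both algebras are $\calO_X$ (the definition of a Cartan calculus on $X$ presupposes $\Omega^0 = \calO_X$ and that $\calX(X)$ acts as the given $\Omega^0$-module), so $\phi$ must be the identity there. In degree $1$, consider the map $\calO_X \to \Omega^1(X)$, $f \mapsto df$. It is additive because $d$ is additive. It is a derivation because $d$ is a graded derivation, which gives $d(fg) = (df) g + f (dg)$. For $r \in \Hom(*,R)$ we have $d(rf) = r\,df$, since $d$ is $R$-linear: in the statement of the Cartan calculus the algebra is over the constants, and in any case $dr = 0$ would follow from $\iota_v dr = \Lie_v r = v\cdot r = 0$, which I would check if needed. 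By the universal property of $\Omega^1_\text{\rm K\"ah}(X)$ there is then a unique $\calO_X$-linear map $\phi^1$ with $\phi^1(d_\mathrm{K} f) = df$. Since $\Omega_\text{\rm K\"ah}(X)$ is the free graded commutative $\calO_X$-algebra on $\Omega^1_\text{\rm K\"ah}(X)$ and $\Omega(X)$ is graded commutative, $\phi^1$ extends uniquely to a morphism of graded $\calO_X$-algebras $\phi$.

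Uniqueness of $\phi$ is immediate. Any such $\phi$ is $\calO_X$-linear, satisfies $\phi(d_\mathrm{K} f) = d\phi(f) = df$, and is multiplicative. The elements $f$ and $d_\mathrm{K} f$ generate $\Omega_\text{\rm K\"ah}(X)$ as a graded algebra, so $\phi$ is determined.

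For compatibility with $d$, I use that $\phi \circ d_\mathrm{K}$ and $d \circ \phi$ are both $\phi$-derivations of degree $+1$ from $\Omega_\text{\rm K\"ah}(X)$ to $\Omega(X)$. Such maps are determined by their values on algebra generators, so it suffices to compare them on $f$ and on $d_\mathrm{K} f$. On $f$ both give $df$. On $d_\mathrm{K} f$ the left side is $\phi(0) = 0$ and the right side is $d(df) = 0$. The same argument handles $\iota$. Here $\phi\circ\iota^\mathrm{K}_v$ and $\iota_v \circ \phi$ are $\phi$-derivations of degree $-1$, and both vanish on $f$. On $d_\mathrm{K} f$ the left side is $v\cdot f$. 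The right side is $\iota_v df = \Lie_v f - d\iota_v f = \Lie_v f$.

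The main obstacle is to identify $\Lie_v f$ with $v\cdot f$ inside an abstract Cartan calculus. This is exactly where the hypothesis on $(\calX(X), \Omega(X), d, \iota)$ must tie $\calX(X)$ to its action on $\calO_X$. I would first try to derive it from the axioms: by $\iota_{gv} = g\,\iota_v$ and the graded derivation property, $f \mapsto \iota_v df$ is a derivation of $\calO_X$, and $[\Lie_v, \iota_w] = \iota_{[v,w]}$ constrains it further. But I expect that the identity $\iota_v df = v\cdot f$ has to be read as part of what it means for the Cartan calculus to be built on the Lie algebroid $\calX(X)$, as in Definition~6.5 of \cite{AintablianBlohmann:2026}. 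I would check that convention there and invoke it at this step. Everything else is formal.
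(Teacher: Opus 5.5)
Your proposal is correct and follows the paper's proof: $\phi$ is fixed on the generators $f$ and $d_\mathrm{K}f$, the K\"ahler relations are checked using the Leibniz rule for $d$, and compatibility with $d$ and with the inner derivatives is verified on generators, which suffices because both sides are $\phi$-derivations. You are also right that $\iota_v df = v\cdot f$ does not follow from Definition~\ref{def:CartanCalc} (setting $\iota_v = 0$ for all $v$ satisfies all its axioms), so it must be part of the hypothesis, and the paper's ``analogous argument'' uses it tacitly; the same goes for $d(rf) = r\,df$ with $r \in \Hom(*,R)$, where your fallback does not work, since $\iota_v(dr) = 0$ for all $v$ does not force $dr = 0$ without some nondegeneracy of $\iota$.
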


\begin{proof}
Since $\phi$ is required to be a morphism of $\calO_X$-algebras, we have
\begin{equation}
\label{eq:Kaehcompmap01}
  \phi(f) = f
\end{equation}
for all $f \in \calO_X = \Omega^0_\text{K\"ah}(X)$. Because $\phi$ is required to intertwine the differentials, we have
\begin{equation}
\label{eq:Kaehcompmap02}
  \phi(d_\mathrm{K}f)
  = d \bigl( \phi(f) \bigr)
  = df
\end{equation}
for all $f$. Since $\Omega_\text{K\"ah}(X)$ is generated as a ring by $f$ and $d_\mathrm{K}f$ for all $f \in \calO_X$, Equations~\eqref{eq:Kaehcompmap01} and \eqref{eq:Kaehcompmap02} determine a morphism of algebras uniquely. We still have to check that $\phi$ is well defined. We have
\begin{equation*}
\begin{split}
  \phi\bigl( d_\mathrm{K}(fg) - (d_\mathrm{K}f)g - f(d_\mathrm{K}g) \bigr)
  &=
  \phi\bigl( d_\mathrm{K}(fg) \bigr)
  - \bigl(\phi(d_\mathrm{K}f)\bigr) g
  - f\bigl(\phi(d_\mathrm{K}g) \bigr)
  \\
  &=
  d(fg) - (df) g - f(dg)
  \\
  &= 0
  \,,
\end{split}
\end{equation*}
where we have used the $\calO_X$-linearity of $\phi$, then \eqref{eq:Kaehcompmap02}, and finally the fact that $d$ is a differential on $\Omega(X)$. Similar computations show that $\phi$ preserves all defining relations of $\Omega_\text{K\"ah}(X)$. We conclude that $\phi:\Omega_\text{K\"ah}(X) \to \Omega(X)$ is a uniquely determined morphism of $\calO_X$-algebras.

Equation~\eqref{eq:Kaehcompmap02} states that $\phi$ intertwines $d_\mathrm{K}$ and $d$ on functions $f$. Since $\phi(d_\mathrm{K}(d_\mathrm{K}f)) = \phi(0) = 0 = d^2 f = d^2(\phi(f))$, $\phi$ intertwines the differentials also on $d_\mathrm{K}f$. Since $\phi$ intertwines the differentials on the generators of the ring $\Omega_\text{K\"ah}(X)$ and $\phi$ is a morphism of rings, it follows that $\phi$ intertwines the differentials on all of $\Omega_\text{K\"ah}(X)$. By an analogous argument, we conclude that $\phi$ intertwines the inner derivatives.
\end{proof}


\subsection{The terminal Cartan calculus of Lie-Rinehart forms}
\label{sec:LRforms}

An algebraic structure that gives rise to a Cartan calculus is a Lie-Rinehart algebra, a concept introduced in \cite{Rinehart:1963}.

\begin{Definition}
\label{def:LieRinehartAlg}
A \textbf{Lie-Rinehart algebra} over a commutative ring $\calO$ is an $\calO$-module $\calX$ that is equipped with a Lie bracket and an $\calO$-linear Lie algebra action on $\calO$ by derivations, satisfying the Leibniz rule
\begin{equation}
\label{eq:LieRineLeibniz}
  [v, fw] = (v \cdot f)\, w + f[v,w]
\end{equation}
for all $v, w \in \calX$ and $f \in \calO$.
\end{Definition}

\begin{Remark}
Definition~\ref{def:LieRinehartAlg} generalizes to a commutative $\calR$-algebra $\calO$, where $\calR$ is some commutative ring, typically a field \cite{Huebschmann:1998}. In this generalization, the Lie bracket is required to be $\calR$-bilinear.
\end{Remark}

\begin{Proposition}[Proposition~6.4 in \cite{AintablianBlohmann:2026}]
\label{prop:LieAlgdRinehart}
Let $A \to X$ be an abstract Lie algebroid in a cartesian tangent category with scalar $R$-multiplication. Let $\calO \coloneqq \Hom(X,R)$ and $\calA \coloneqq \Gamma(X,A)$. Then the $\calO$-module $\calA$ with the action $a \cdot f \coloneqq (\rho \circ a) \cdot f$ on $\calO$ is a Lie-Rinehart algebra.
\end{Proposition}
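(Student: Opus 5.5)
The statement to verify is Proposition~\ref{prop:LieAlgdRinehart}: for an abstract Lie algebroid $A \to X$ with anchor $\rho$, the sections $\calA = \Gamma(X,A)$ form a Lie-Rinehart algebra over $\calO = \Hom(X,R)$. The plan is to check the axioms of Definition~\ref{def:LieRinehartAlg} one at a time, in each case transporting the needed identity from the tangent Lie algebroid of Theorem~\ref{thm:TangentLieAlgd} through the anchor.

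First, $\calA$ is an $\calO$-module. Since $A \to X$ is a bundle of $R$-modules, the action $fa = \kappa \circ (f,a)$ of \eqref{eq:SectionsModuleStr} makes $\calA$ a module over $\calO$. The Lie bracket on $\calA$ is given as part of the structure of an abstract Lie algebroid.

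Second, the action $a \cdot f \coloneqq (\rho\circ a)\cdot f$ is a Lie algebra action by derivations.
\begin{itemize}
\item The derivation property and additivity in $f$ follow from \eqref{eq:VecActFuncA} and \eqref{eq:VecActFunc2} of Proposition~\ref{prop:VecActFunc}, applied to the vector field $\rho\circ a$.
\item For the action property we combine \eqref{eq:anchorLieAlgMap} with \eqref{eq:VecActFunc1}:
\[
[a,b]\cdot f = [\rho\circ a,\rho\circ b]\cdot f = a\cdot(b\cdot f) - b\cdot(a\cdot f).
\]
\end{itemize}

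Third, the action is $\calO$-linear in $a$.
\begin{itemize}
\item Additivity: $\rho$ is a morphism of bundles of abelian groups, so $\rho\circ(a+b) = \rho\circ a + \rho\circ b$. Then \eqref{eq:VecActFuncB} gives $(a+b)\cdot f = a\cdot f + b\cdot f$.
\item Homogeneity: since $\rho$ is $R$-linear, $\rho\circ(ga) = g(\rho\circ a)$. So it suffices to show $(gv)\cdot f = g\,(v\cdot f)$ for a vector field $v$. Here $v\cdot f = \Val_R\circ Tf\circ v$. Because $Tf$ is a morphism of bundles of $R$-modules, $Tf\circ\kappa_X = \kappa_R\circ(\id_R\times Tf)$. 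And $\Val_R$ is $R$-linear, since the trivialization $TR\cong R\times R$ is an isomorphism of bundles of $R$-modules. Together these give $\Val_R\circ Tf\circ\kappa_X\circ(g,v) = \Rmult\circ(g, v\cdot f)$.
\end{itemize}

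Finally, the Leibniz rule \eqref{eq:LieRineLeibniz} is exactly the axiom \eqref{eq:LieAlgdLeibniz}, once we use the definition of the action.

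The main obstacle is the naturality step $Tf\circ\kappa_X = \kappa_R\circ(\id_R\times Tf)$. This requires that $\kappa$ be natural in $X$, which holds because $\kappa$ is a natural morphism by Proposition~\ref{prop:RModuleBundle}. With that in hand, the rest is bookkeeping.
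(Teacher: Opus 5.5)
Your proposal is correct and complete. The paper gives no argument for this statement and only imports it from \cite{AintablianBlohmann:2026}; your axiom-by-axiom check is a full proof: Proposition~\ref{prop:VecActFunc} applied to $\rho\circ a$, \eqref{eq:anchorLieAlgMap} for the action property, \eqref{eq:LieAlgdLeibniz} for the Leibniz rule, and $(gv)\cdot f = g\,(v\cdot f)$ derived from the naturality of $\kappa$ and the $R$-linearity of $\Val_R$. That last step uses the same ingredients the paper uses in the proof of Lemma~\ref{lem:DiffImpliesLin}, and it rightly supplies the $\calO$-linearity, which is not among the identities listed in Proposition~\ref{prop:VecActFunc}.
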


To every Lie-Rinehart algebra $\calX$ over $\calO$, we can associate a Cartan calculus as follows. A \textbf{Lie-Rinehart $n$-form} is an $\calO$-linear map
\begin{equation*}
  \omega: \wedge^n_\calO \calX
  \longrightarrow \calO
  \,.
\end{equation*}
The graded $\calO$-module of Lie-Rinehart forms
\begin{equation*}
  \Omega_\mathrm{LR}^\bullet(\calX)
  \coloneqq \Hom_\calO(\wedge^\bullet_\calO \calX, \calO)
\end{equation*}
has the structure of a commutative graded $\calO$-algebra defined by the antisymmetrization of the pointwise multiplication of forms,
\begin{equation*}
  (\alpha \wedge \beta)(v_1, \ldots, v_{p+q})
  \coloneqq \sum_{\mathclap{\sigma \in \Shuff(p,q)^{-1}}}
    (-1)^{|\sigma|}
    \alpha(v_{\sigma(1)}, \ldots, v_{\sigma(p)}) \,
    \beta(v_{\sigma(p+1)}, \ldots, v_{\sigma(p+q)})
    \,.
\end{equation*}
The \textbf{differential} of $\omega$ is the $(n+1)$-form given by the usual Chevalley-Eilenberg formula
\begin{equation}
\label{eq:differential}
\begin{split}
  (d_\mathrm{LR}\omega)(v_0, \ldots, v_n)
  \coloneqq &\sum_{\mathclap{0 \leq i \leq n}} (-1)^i
    v_i \cdot \omega(v_0, \ldots, \hat{v}_i, \ldots, v_n)
  \\
    + &\sum_{\mathclap{0 \leq i < j \leq n}}
    (-1)^{i+j} \omega([v_i, v_j], v_0, \ldots, \hat{v}_i, \ldots, \hat{v}_j, \ldots, v_n)
  \,.
\end{split}
\end{equation}
The \textbf{inner derivative} is the operator that inserts an element $v \in \calX$ into a form:
\begin{equation}
\label{eq:InnDeriv}
\begin{aligned}
   \iota^\mathrm{LR}_v: \Omega_\mathrm{LR}^n(\calX)
   &\longrightarrow \Omega_\mathrm{LR}^{n-1}(\calX)
   \\
   (\iota^\mathrm{LR}_v \omega)(v_1, \ldots, v_{n-1})
   &\coloneqq
   \omega(v, v_1, \ldots, v_{n-1})
   \,.
\end{aligned}
\end{equation}

\begin{Proposition}[Theorem~4.2 and \S6 in \cite{Rinehart:1963}]
\label{prop:ACartanCalc}
Let $\calX$ be a Lie-Rinehart algebra over $\calO$. The commutative differential graded algebra $(\Omega_\mathrm{LR}(\calX), d_\mathrm{LR})$, the Lie algebra $\calX$, and the inner derivative~\eqref{eq:InnDeriv} constitute a Cartan calculus.
\end{Proposition}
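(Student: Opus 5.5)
We need to prove that Lie-Rinehart forms with $d_\mathrm{LR}$, wedge, and $\iota^\mathrm{LR}$ form a Cartan calculus in the sense of Definition~\ref{def:CartanCalc}. The proof runs in three stages: algebra structure and inner derivative, the differential, and the bracket relations.

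First, the algebra structure and the inner derivative. We check that $\wedge$ of two $\calO$-multilinear alternating maps is again $\calO$-multilinear and alternating. Associativity and graded commutativity follow from the same shuffle bookkeeping as in Section~\ref{sec:ProofRing}, now applied to honest elements $v_1,\dots,v_{p+q}$ instead of positions in $T^{p+q}X$. The inner derivative $\iota^\mathrm{LR}_v$ clearly lands in alternating $\calO$-multilinear maps. It is additive and $\calO$-linear in $v$, which gives~\eqref{eq:InnDerOlinear}. Moreover $\iota_v\iota_w=-\iota_w\iota_v$ by antisymmetry. The graded derivation property of $\iota_v$ is the standard splitting of the $(p,q)$-unshuffles according to whether $v$ goes to the $\alpha$-slot or the $\beta$-slot, as in~\eqref{eq:InDer10}.

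Second, the differential, which is the main obstacle. The Chevalley--Eilenberg formula~\eqref{eq:differential} is a priori only additive in each $v_i$, so we must show $d_\mathrm{LR}\omega$ is $\calO$-multilinear. By alternation it suffices to replace $v_0$ by $fv_0$. Then:
\begin{itemize}
\item the term $(fv_0)\cdot\omega(\dots)$ is $f$ times the original, by $\calO$-linearity of the action;
\item the terms $v_i\cdot\omega(fv_0,\dots)$ produce an extra $(v_i\cdot f)\,\omega(v_0,\dots,\hat v_i,\dots)$, by the derivation property;
\item the bracket terms $[v_i,fv_0]$ produce, via the Leibniz rule~\eqref{eq:LieRineLeibniz}, the extra term $(v_i\cdot f)\,\omega(v_0,\dots)$ with the opposite sign.
\end{itemize}
These extra contributions cancel. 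Alternation of $d_\mathrm{LR}\omega$ is checked on adjacent transpositions.

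For $d^2=0$ and the derivation property, rather than brute-force expansion, I would first establish the Cartan relations at the operator level, using $\Lie_v\coloneqq[\iota_v,d_\mathrm{LR}]$. Directly from~\eqref{eq:differential} and~\eqref{eq:InnDeriv}, one gets the recursive identity
\[
  \iota_{v_0}d_\mathrm{LR}\omega + d_\mathrm{LR}\iota_{v_0}\omega = \Lie_{v_0}\omega ,
\]
where
\[
  (\Lie_v\omega)(v_1,\dots,v_n)= v\cdot\omega(v_1,\dots,v_n)-\sum_i\omega(v_1,\dots,[v,v_i],\dots,v_n).
\]
This explicit formula immediately yields $[\Lie_v,\iota_w]=\iota_{[v,w]}$. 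It also gives $[\Lie_v,\Lie_w]=\Lie_{[v,w]}$, using the Jacobi identity and the fact that the action is a Lie algebra representation.

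Third, derivation property, $d^2=0$, and the remaining bracket relations. Since forms are detected by iterated insertion, one proves that $d_\mathrm{LR}$ is a graded derivation and that $d_\mathrm{LR}^2=0$ by induction on degree, contracting with $\iota_v$ and using the relations above. For example, $\iota_v d^2\omega = \Lie_v d\omega - d\Lie_v\omega + dd\iota_v\omega$. One needs $[\Lie_v,d]=0$, which is itself shown by induction via $\iota_w[\Lie_v,d]=[\Lie_v,\iota_w d]-\iota_{[v,w]}d=\dots$. The hard bookkeeping is ordering these inductions consistently. This is precisely Rinehart's argument, so at worst one cites Theorem~4.2 of \cite{Rinehart:1963}. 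Once $d^2=0$ holds, Remark~\ref{rmk:CartanRedund} supplies the remaining bracket relations.
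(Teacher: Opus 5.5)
Your argument is correct, but it is not the paper's route. The paper gives no proof of this proposition and simply defers to Theorem~4.2 and \S6 of Rinehart, whereas you verify everything from Definition~\ref{def:LieRinehartAlg}.

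\emph{What your route shows.} The only genuinely Lie--Rinehart input ($\calO$-linearity of the action and the Leibniz rule~\eqref{eq:LieRineLeibniz}) is used once, in the check that $d_\mathrm{LR}$ preserves $\calO$-multilinearity. After that, everything is Chevalley--Eilenberg calculus for $\calX$ acting on $\calO$. In fact, once you know $\Omega_\mathrm{LR}(\calX)$ is closed under $d_\mathrm{LR}$ and $\wedge$, it is a sub-CDGA of the ordinary Chevalley--Eilenberg cochains of $\calX$ with coefficients in $\calO$. So $d_\mathrm{LR}^2=0$ and the Leibniz rule could simply be inherited.

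\emph{Your Stage~3.} It also works, and it runs Remark~\ref{rmk:CartanRedund} backwards. The explicit formula gives $[\Lie_v,\iota_w]=\iota_{[v,w]}$ and $[\Lie_v,\Lie_w]=\Lie_{[v,w]}$. The formal identity $\iota_w[\Lie_v,d_\mathrm{LR}]=[\Lie_v,\Lie_w]-\Lie_{[v,w]}-[\Lie_v,d_\mathrm{LR}]\iota_w$ then yields $[\Lie_v,d_\mathrm{LR}]=0$ by induction on degree; the base case uses $\iota_w f=0$. Since $[\Lie_v,d_\mathrm{LR}]=\iota_v d_\mathrm{LR}^2-d_\mathrm{LR}^2\iota_v$ identically, $d_\mathrm{LR}^2=0$ follows by a second induction. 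So the Remark is not needed at the end.

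\emph{Two details to tighten.}
First, $\calO$ may have $2$-torsion and forms live on $\wedge^\bullet_\calO\calX$. You must therefore show that $d_\mathrm{LR}\omega$ vanishes when two adjacent arguments coincide, not merely that it changes sign. It does, by the same pairwise cancellation together with $[v,v]=0$.
Second, the contraction induction for the derivation property of $d_\mathrm{LR}$ needs that $\Lie_v$ is a degree-$0$ derivation of $\wedge$. This must be read off from the explicit formula; deducing it from $\Lie_v=[\iota_v,d_\mathrm{LR}]$ would be circular.

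The citation buys brevity; your route buys a proof that uses only the definitions in the paper.
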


\begin{Example}[Example~6.8 in \cite{AintablianBlohmann:2026}]
Let $M$ be a manifold. The Lie-Rinehart algebra of the tangent algebroid $TM \to M$ is given by the $\bbR$-algebra $\calO = C^\infty(M)$ of smooth functions, the Lie algebra $\calX \coloneqq \calX(M)$ of vector fields with the Lie bracket, and the action of vector fields as derivations on $C^\infty(M)$. From Proposition~\ref{prop:ACartanCalc} we retrieve the usual Cartan calculus given by the graded algebra of differential forms with the de Rham differential, inner derivative, and Lie derivative.
\end{Example}

\begin{Proposition}[Proposition~6.11 in \cite{AintablianBlohmann:2026}]
\label{prop:CartanOnA}
Let $A \to X$ be an abstract Lie algebroid in a cartesian tangent category with scalar $R$-multiplication; let $\calO_X \coloneqq \Hom(X,R)$. Then the Lie algebra of sections $\calA \coloneqq \Gamma(X,A)$ and the commutative graded $\calO_X$-algebra of forms,
\begin{equation*}
  \Omega_\mathrm{LR}^\bullet(\calA)
  \coloneqq
  \Hom_{\calO_X}\bigl(\wedge^\bullet_{\calO_X} \calA,
    \calO_X \bigr)
  \,,
\end{equation*}
together with the differential~\eqref{eq:differential} and the inner derivative~\eqref{eq:InnDeriv} constitute a Cartan calculus in the sense of Definition~\ref{def:CartanCalc}.
\end{Proposition}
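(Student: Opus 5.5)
The plan is to reduce the statement to Proposition~\ref{prop:ACartanCalc}, using Theorem~\ref{thm:TangentLieAlgd} as the source of the Lie-Rinehart structure. By Proposition~\ref{prop:LieAlgdRinehart}, the sections $\calA = \Gamma(X,A)$ of the abstract Lie algebroid form a Lie-Rinehart algebra over $\calO_X = \Hom(X,R)$. This uses the $\calO_X$-module structure~\eqref{eq:SectionsModuleStr} coming from the bundle of $R$-modules $A \to X$, the action $a \cdot f = (\rho \circ a)\cdot f$, and the Lie bracket on $\Gamma(X,A)$. Proposition~\ref{prop:ACartanCalc} then immediately gives that $\Omega_\mathrm{LR}(\calA)$, with $d_\mathrm{LR}$ and $\iota^\mathrm{LR}$, is a commutative differential graded algebra, and that the graded derivations $\iota_v$ satisfy the bracket relations~\eqref{eq:CartanCalcBrackets}. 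So the proof is essentially a bookkeeping check that the hypotheses of those two results are met and that the definitions match.

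The concrete verifications, in order, are the following.

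\textbf{(a) The action is a Lie algebra action by derivations that is $\calO_X$-linear in $a$.} The derivation property and additivity come from Proposition~\ref{prop:VecActFunc} composed with $\rho$. For the Lie algebra action I use that $\rho$ preserves brackets~\eqref{eq:anchorLieAlgMap} together with~\eqref{eq:VecActFunc1}. For $\calO_X$-linearity, $(\rho\circ(fa))\cdot f' = (f(\rho\circ a))\cdot f'$ holds because $\rho$ is a morphism of bundles of $R$-modules. The remaining identity $(fv)\cdot f' = f\,(v\cdot f')$ follows from the $R$-linearity of $\Val_R$ and the naturality of $\kappa$: one writes $T f'\circ \kappa_X = \kappa_R\circ(\id\times Tf')$.

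\textbf{(b) The Leibniz rule.} This is exactly~\eqref{eq:LieAlgdLeibniz}.

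\textbf{(c) Compatibility with Definition~\ref{def:CartanCalc}.} The remaining conditions are~\eqref{eq:InnDerOlinear}. Additivity of $\iota_v$ in $v$ and $\iota_{fv}\omega = f\,\iota_v\omega$ both hold because $\omega$ is $\calO_X$-multilinear. Finally, $\calX = \calA$ is an $\calO_X$-module carrying the given Lie bracket.

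The only step needing genuine care is the $\calO_X$-linearity in (a), since it is not stated verbatim earlier. If it turns out not to be covered by Proposition~\ref{prop:LieAlgdRinehart}, I will prove it directly from Proposition~\ref{prop:RmodLinearization} applied to the module $R$, together with naturality of $\kappa$. Everything else is citation.
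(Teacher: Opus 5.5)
Your proposal is correct and follows the paper's proof, which is the single step of combining Proposition~\ref{prop:LieAlgdRinehart} with Proposition~\ref{prop:ACartanCalc}; your checks (a)--(c) are already contained in those two results, and Theorem~\ref{thm:TangentLieAlgd} is not needed, since it only treats the special case $A = TX$. Your main argument for $(fv)\cdot f' = f\,(v\cdot f')$ is right, because it uses naturality of $\kappa$ and the $R$-linearity of $\Val_R$ with respect to $\kappa_R$; your fallback via Proposition~\ref{prop:RmodLinearization} would not give this directly, since that result concerns $\Tpart{2}\Rmult$ rather than $\kappa_R$.
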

\begin{proof}
The statement follows from Proposition~\ref{prop:LieAlgdRinehart} and Proposition~\ref{prop:ACartanCalc}.
\end{proof}

\begin{Theorem}[Theorem~6.13 in \cite{AintablianBlohmann:2026}]
\label{thm:CartanOnX}
Let $X$ be an object in a cartesian tangent category with scalar $R$-multiplication. Then the Lie algebra of vector fields $\calX(X) \coloneqq \Gamma(X,TX)$ and the commutative graded $\calO_X$-algebra of forms,
\begin{equation*}
  \Omega_\mathrm{LR}^\bullet(X)
  \coloneqq
  \Hom_{\calO_X}\bigl(\wedge^\bullet_{\calO_X} \calX(X),
    \calO_X \bigr)
  \,,
\end{equation*}
together with the differential~\eqref{eq:differential} and the inner derivative~\eqref{eq:InnDeriv} constitute a Cartan calculus in the sense of Definition~\ref{def:CartanCalc}.
\end{Theorem}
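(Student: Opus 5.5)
The statement is Theorem~\ref{thm:CartanOnX}. I would deduce it directly from Proposition~\ref{prop:CartanOnA}, applied to the tangent Lie algebroid. By Theorem~\ref{thm:TangentLieAlgd}, the tangent bundle $TX \to X$ is an abstract Lie algebroid. Its anchor is $\id_{TX}$, and its bracket is the Lie bracket of vector fields from~\eqref{eq:deltaBracketRel}. So I set $A = TX$ and $\rho = \id$. Then $\calA = \Gamma(X,TX) = \calX(X)$, and the $\calO_X$-module structure is $fv = \kappa_X \circ (f,v)$ from~\eqref{eq:fvDef}. The induced action $a \cdot f = (\rho\circ a)\cdot f$ is exactly the action~\eqref{eq:VecFieldFunc}.

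The key steps, in order, are the following.

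First, I would check the Lie-Rinehart structure on the data $(\calO_X, \calX(X))$. By Proposition~\ref{prop:VecActFunc}, the action is a Lie algebra representation by derivations. By Proposition~\ref{prop:LeibnizRule}, the Leibniz rule~\eqref{eq:LieRineLeibniz} holds. The action must also be $\calO_X$-linear in the vector field, $(fv)\cdot g = f(v\cdot g)$. This is part of the content of Proposition~\ref{prop:LieAlgdRinehart}; at root it comes from~\eqref{diag:ScalarMult2} together with the fact that $\Val_R$ is $R$-linear. So Proposition~\ref{prop:LieAlgdRinehart} shows that $\calX(X)$ is a Lie-Rinehart algebra over $\calO_X$.

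Second, I would apply Proposition~\ref{prop:ACartanCalc}, or equivalently Proposition~\ref{prop:CartanOnA}. This gives that $\Hom_{\calO_X}(\wedge^\bullet_{\calO_X}\calX(X),\calO_X)$, with the Chevalley--Eilenberg differential~\eqref{eq:differential} and the insertion~\eqref{eq:InnDeriv}, is a Cartan calculus.

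Third, I would match this against the axioms of Definition~\ref{def:CartanCalc}. The axioms $\iota_{v+w}=\iota_v+\iota_w$ and $\iota_{fv}\omega = f\wedge\iota_v\omega$ are immediate from multilinearity over $\calO_X$, and the bracket relations are the classical identities. By Remark~\ref{rmk:CartanRedund}, only $[\iota_v,\iota_w]=0$ and $[\Lie_v,\iota_w]=\iota_{[v,w]}$ need to be checked. The first holds by antisymmetry. The second is a standard computation from~\eqref{eq:differential}.

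The only step that is not pure bookkeeping is establishing that the bracket satisfies the Leibniz rule and the Jacobi identity. Both are supplied by earlier results: Proposition~\ref{prop:LeibnizRule} and the Rosick\'y--Cockett--Cruttwell theorem. With those in hand, the proof is just the composition of Theorem~\ref{thm:TangentLieAlgd} and Proposition~\ref{prop:CartanOnA}.
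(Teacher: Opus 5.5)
Your proposal is correct and follows the intended argument: the statement is Theorem~\ref{thm:TangentLieAlgd} composed with Proposition~\ref{prop:CartanOnA}, which is itself Proposition~\ref{prop:LieAlgdRinehart} plus Rinehart's Proposition~\ref{prop:ACartanCalc}, so your third step re-checking the Cartan axioms is redundant. One small misattribution: the $\calO_X$-linearity $(fv)\cdot g = f\,(v\cdot g)$ follows from the naturality of $\kappa$, $Tg \circ \kappa_X = \kappa_R \circ (\id_R \times Tg)$, together with the $R$-linearity of $\Val_R$, not from \eqref{diag:ScalarMult2}; that diagram is what enters the Leibniz rule of Proposition~\ref{prop:LeibnizRule}.
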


The Cartan calculus of Lie-Rinehart forms is terminal in the following sense.

\begin{Proposition}
\label{prop:LRterminal}
Let $X$ be an object in a cartesian tangent category with scalar $R$-multiplication. Let $(\calX(X), \Omega(X), d, \iota)$ be a Cartan calculus in the sense of Definition~\ref{def:CartanCalc}. Then there is a unique morphism of graded $\calO_X$-algebras $\psi: \Omega(X) \to \Omega_\mathrm{LR}(X)$ such that
\begin{equation*}
  \psi\bigl( d\omega \bigr)
  = d_\mathrm{LR} \bigl( \psi(\omega) \bigr)
  \,,\qquad
  \psi\bigl( \iota_v \omega \bigr)
  = \iota^\mathrm{LR}_v \bigl( \psi(\omega) \bigr)
\end{equation*}
for all $\omega \in \Omega(X)$ and $v \in \calX(X)$.
\end{Proposition}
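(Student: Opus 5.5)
Uniqueness and existence both come from iterating the inner derivatives. The map $\psi$ is forced: it must send $\omega \in \Omega^n(X)$ to
\begin{equation*}
  \psi(\omega)(v_1, \ldots, v_n) \coloneqq c_n\, \iota_{v_n} \cdots \iota_{v_1} \omega \in \Omega^0(X) = \calO_X
\end{equation*}
for a fixed sign $c_n = \pm 1$. Indeed, if $\psi$ intertwines inner derivatives, then
\begin{equation*}
  \psi(\omega)(v_1,\ldots,v_n) = \iota^\mathrm{LR}_{v_n}\cdots\iota^\mathrm{LR}_{v_1}\psi(\omega) \cdot(\text{sign}) = \psi\bigl(\iota_{v_n}\cdots\iota_{v_1}\omega\bigr)\cdot(\text{sign}).
\end{equation*}
Since $\psi$ is $\calO_X$-linear in degree $0$ and, being a unital algebra map of $\calO_X$-algebras, is the identity on $\Omega^0 = \calO_X$, this settles uniqueness. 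The sign $c_n$ is determined by the convention $(\iota^\mathrm{LR}_v\omega)(v_1,\dots) = \omega(v,v_1,\dots)$; with that convention $\iota^\mathrm{LR}_{v_n}\cdots\iota^\mathrm{LR}_{v_1}\eta = \eta(v_1,\ldots,v_n)$ up to reversal, and I fix $c_n$ accordingly.

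For existence, I will define $\psi$ by the displayed formula and check the required properties in order.

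First, $\psi(\omega)$ is a Lie-Rinehart form. It is additive and $\calO_X$-linear in each slot by \eqref{eq:InnDerOlinear}, using $\iota_{fv}\eta = f\wedge\iota_v\eta$ together with the fact that each $\iota_{v_j}$ is $\calO_X$-linear, being a graded derivation vanishing on $\Omega^0$. It is alternating by $[\iota_v,\iota_w]=0$; to get alternating rather than merely antisymmetric, take $v=w$, so that $2\iota_v\iota_v = 0$, and note that $\iota_v\iota_v = \tfrac12[\iota_v,\iota_v]$ vanishes because the graded commutator of an odd operator with itself is $2\iota_v^2$. If one prefers to avoid dividing by $2$, one uses $\iota_v^2=0$ as part of the Cartan axioms read as $[\iota_v,\iota_w]=0$ with $v=w$.

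Second, $\psi$ intertwines inner derivatives; this is immediate from the definition. Third, $\psi$ is a morphism of graded algebras. I will prove this by induction on total degree: two Lie-Rinehart forms agree if all their contractions $\iota^\mathrm{LR}_v$ agree (in positive degree), and $\iota_v$ and $\iota^\mathrm{LR}_v$ are both graded derivations. So $\iota^\mathrm{LR}_v\psi(\alpha\wedge\beta) = \psi(\iota_v\alpha\wedge\beta) \pm \psi(\alpha\wedge\iota_v\beta)$, which equals $\iota^\mathrm{LR}_v(\psi\alpha\wedge\psi\beta)$ by induction.

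The main step is compatibility with the differentials, which I will also handle by induction on degree using the same contraction principle. In degree $0$, $\psi(df)(v) = \iota_v df = \Lie_v f$. To identify this with $v\cdot f$, I must use that the Cartan calculus is built on the Lie-Rinehart algebra $\calX(X)$ with its given action on $\calO_X$, so that $\Lie_v f = v\cdot f$. I would record this as part of the hypothesis: in Definition~\ref{def:CartanCalc} the anchor is implicit, and for the calculi in question $\iota_v df = v\cdot f$. This is the subtle point, and if it is not automatic I expect it must be imposed. For the inductive step in higher degree, compute
\begin{equation*}
  \iota^\mathrm{LR}_v\psi(d\omega) = \psi(\iota_v d\omega) = \psi(\Lie_v\omega) - \psi(d\iota_v\omega).
\end{equation*}
The last term equals $d_\mathrm{LR}\psi(\iota_v\omega) = d_\mathrm{LR}\iota^\mathrm{LR}_v\psi(\omega)$ by induction. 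It then suffices to show $\psi(\Lie_v\omega) = \Lie^\mathrm{LR}_v\psi(\omega)$, where on the Lie-Rinehart side $\Lie^\mathrm{LR}_v$ is given by the usual formula $v\cdot\eta(\ldots) - \sum\eta(\ldots,[v,v_i],\ldots)$. I will prove this by contracting all slots and using $[\Lie_v,\iota_w]=\iota_{[v,w]}$ repeatedly, which moves $\Lie_v$ past all the $\iota_{v_i}$, producing bracket terms and finally $\Lie_v$ of a function, which equals $v\cdot(\text{function})$. Then Cartan's formula on the Lie-Rinehart side, Proposition~\ref{prop:ACartanCalc}, closes the induction. I expect the main obstacle to be bookkeeping of signs and the degree-$0$ identification $\iota_v df = v\cdot f$.
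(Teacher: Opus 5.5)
Your proposal is correct. Uniqueness and multiplicativity follow the paper: the paper also sets $\psi(\omega)(v_1,\ldots,v_n)=\iota_{v_n}\cdots\iota_{v_1}\omega$, and with the stated convention for $\iota^\mathrm{LR}$ no sign appears, so $c_n=1$ as in \eqref{eq:LRcompmap02}. Multiplicativity is likewise proved by induction on total degree.

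The compatibility with $d$ is organized differently. The paper asserts the closed identity \eqref{eq:LRcompmap04} for $\iota_{v_n}\cdots\iota_{v_0}d\omega$, says it follows from the bracket relations, and matches it termwise with the Chevalley--Eilenberg formula. You instead induct on degree through $\iota_v d=\Lie_v-d\iota_v$ and reduce to $\psi\circ\Lie_v=\Lie^\mathrm{LR}_v\circ\psi$. You prove that by commuting $\Lie_v$ past the contractions with $[\Lie_v,\iota_w]=\iota_{[v,w]}$, and close with the Cartan identities of Proposition~\ref{prop:ACartanCalc} on the Lie--Rinehart side. Your version needs the Chevalley--Eilenberg formula only in degree $0$, and it effectively supplies the derivation of \eqref{eq:LRcompmap04} that the paper leaves to the reader. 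The paper's version has the advantage of an explicit abstract Chevalley--Eilenberg formula valid in any Cartan calculus. You also verify that $\psi(\omega)$ is $\calO_X$-multilinear, which the paper takes for granted.

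The point you flag is real: $\iota_v df=v\cdot f$ does not follow from Definition~\ref{def:CartanCalc}. The calculus $\Omega=\Omega^0=\calO_X$, $d=0$, $\iota_v=0$ satisfies all the axioms. Yet $\psi(df)=0\neq d_\mathrm{LR}f$ whenever some $v\cdot f\neq 0$, so the statement needs this compatibility as a hypothesis. The paper uses it tacitly when it rewrites $\Lie_{v_i}$ of the function $\iota_{v_n}\cdots\widehat{\iota_{v_i}}\cdots\iota_{v_0}\omega$ as $v_i\cdot(\ldots)$. For the cubical calculus the compatibility does hold, since \eqref{eq:LieDerFormula} gives $\Lie_v f=v\cdot f$.

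Your argument for alternation needs a repair, because your second sentence does not actually avoid dividing by $2$. Reading $[\iota_v,\iota_w]=0$ with $v=w$ yields only $2\iota_v^2=0$. To conclude that $\psi(\omega)$ vanishes on repeated arguments, you need either that $\calO_X$ has no $2$-torsion or that $\iota_v^2=0$ is imposed as an axiom. The first holds automatically when $R$ has none, as in Theorem~\ref{thm:CompMaps}. The paper does not address this point either.
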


\begin{proof}
Since $\psi$ is required to be a morphism of $\calO_X$-algebras, we have
\begin{equation}
\label{eq:LRcompmap01}
  \psi(f) = f
\end{equation}
for all $f \in \calO_X = \Omega^0_\mathrm{LR}(X)$. Let $\omega \in \Omega^n(X)$, $n \geq 1$. Because $\psi$ is required to intertwine the inner derivatives, we have
\begin{equation}
\label{eq:LRcompmap02}
\begin{split}
  \bigl(\psi(\omega)\bigr)(v_1, \ldots, v_n)
  &=
  \iota^\mathrm{LR}_{v_n} \cdots \iota^\mathrm{LR}_{v_2}
  \iota^\mathrm{LR}_{v_1} \psi(\omega)
  \\
  &=
  \psi\bigl( \iota_{v_n} \cdots \iota_{v_1} \omega \bigr)
  \\
  &=
  \iota_{v_n} \cdots \iota_{v_1} \omega
  \,,
\end{split}
\end{equation}
for all $v_1, \ldots, v_n \in \calX(X)$. This determines $\psi(\omega) \in \Omega^n_\mathrm{LR}(X)$ uniquely.

To show that $\psi$ is a morphism of algebras, we proceed by induction over the total degree $p+q$ of $\alpha \in \Omega^p(X)$ and $\beta \in \Omega^q(X)$. For $p = q = 0$, the claim holds because of~\eqref{eq:LRcompmap01}. Assume now that the claim holds for all pairs of forms of total degree less than $p+q \geq 1$. The inner derivatives of $\Omega(X)$ and $\Omega_\mathrm{LR}(X)$ are derivations of degree $-1$, by the properties of a Cartan calculus. Using \eqref{eq:LRcompmap02}, we obtain
\begin{equation}
\label{eq:LRcompmap03}
\begin{split}
  \iota^\mathrm{LR}_v \bigl( \psi(\alpha \wedge \beta) \bigr)
  &= \psi\bigl( \iota_v (\alpha \wedge \beta) \bigr)
  \\
  &= \psi\bigl( (\iota_v \alpha) \wedge \beta \bigr)
  + (-1)^p \psi\bigl( \alpha \wedge (\iota_v \beta) \bigr)
  \\
  &= \psi(\iota_v \alpha) \wedge \psi(\beta)
  + (-1)^p \psi(\alpha) \wedge \psi(\iota_v \beta)
  \\
  &= \iota^\mathrm{LR}_v \bigl( \psi(\alpha)\bigr)
     \wedge \psi(\beta)
  + (-1)^p \psi(\alpha) \wedge \iota^\mathrm{LR}_v \bigl( \psi(\beta) \bigr)
  \\
  &= \iota^\mathrm{LR}_v \bigl( \psi(\alpha) \wedge \psi(\beta) \bigr)
\end{split}
\end{equation}
for all $v \in \calX(X)$, where the third equality holds by the induction hypothesis, applied to the pairs $(\iota_v\alpha, \beta)$ and $(\alpha, \iota_v\beta)$ of total degree $p+q-1$. Two Lie-Rinehart forms $\omega$ and $\omega'$ are equal if and only if $\iota^\mathrm{LR}_v \omega = \iota^\mathrm{LR}_v \omega'$ for all $v \in \calX(X)$. We conclude that $\psi(\alpha \wedge \beta) = \psi(\alpha) \wedge \psi(\beta)$.

Using only the bracket relations of the Cartan calculus on $\Omega(X)$, we can show that
\begin{equation}
\label{eq:LRcompmap04}
\begin{split}
  \iota_{v_n}\cdots \iota_{v_0} d\omega
  = &\sum_{\mathclap{0 \leq i \leq n}} (-1)^i \Lie_{v_i}
    \iota_{v_n} \cdots \widehat{\iota_{v_i}} \cdots \iota_{v_0} \omega
  \\
    + &\sum_{\mathclap{0 \leq i < j \leq n}} (-1)^{i+j}
    \iota_{v_n} \cdots \widehat{\iota_{v_j}} \cdots  \widehat{\iota_{v_i}}
    \cdots \iota_{v_0} \iota_{[v_i, v_j]} \omega
\end{split}
\end{equation}
for all $\omega \in \Omega^n(X)$. Using~\eqref{eq:LRcompmap01} and~\eqref{eq:LRcompmap02}, we can rewrite the terms of~\eqref{eq:LRcompmap04} as
\begin{align*}
  \iota_{v_n}\cdots \iota_{v_0} d\omega
  &= \bigl( \psi(d\omega) \bigr)(v_0, \ldots, v_n)
  \\
  \Lie_{v_i}
    \iota_{v_n} \cdots \widehat{\iota_{v_i}} \cdots \iota_{v_0} \omega
  &=
  v_i \cdot \bigl( \psi(\omega) \bigr)
  (v_0, \ldots, \widehat{v_i}, \ldots, v_n)
  \\
  \iota_{v_n} \cdots \widehat{\iota_{v_j}} \cdots  \widehat{\iota_{v_i}}
    \cdots \iota_{v_0} \iota_{[v_i, v_j]} \omega
  &=
  \bigl( \psi(\omega) \bigr)( [v_i, v_j], v_0, \ldots,
  \widehat{v_i}, \ldots, \widehat{v_j}, \ldots, v_n)
  \,.
\end{align*}
By inserting these expressions in~\eqref{eq:LRcompmap04} and comparing the right side with the Chevalley-Eilenberg formula~\eqref{eq:differential}, we obtain $\psi(d\omega) = d_\mathrm{LR}( \psi(\omega) )$.
\end{proof}

The upshot is the following result:

\begin{Theorem}
\label{thm:CompMaps}
Let $X$ be an object in a cartesian tangent category with scalar $R$-multiplication, where $R$ has no $2$-torsion. Then there is a unique sequence
\begin{equation}
\label{eq:CompMaps}
  \Omega_\text{\rm K\"ah}(X)
  \longrightarrow
  \Omega_\mathrm{cub}(X)
  \longrightarrow
  \Omega_\mathrm{LR}(X)
\end{equation}
of morphisms of Cartan calculi between the Cartan calculi of K\"ahler forms, differential cubical forms, and Lie-Rinehart forms.
\end{Theorem}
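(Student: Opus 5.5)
The plan is to assemble Theorem~\ref{thm:CompMaps} from the three structural results already in hand, with no new computation. By Theorem~\ref{thm:CartanCube}(ii), since $R$ has no $2$-torsion, $(\calX(X), \Omega_\mathrm{cub}(X), d, \iota)$ is a Cartan calculus in the sense of Definition~\ref{def:CartanCalc}. By Proposition~\ref{prop:KaehlerCartan} and Theorem~\ref{thm:CartanOnX}, the Kähler and Lie-Rinehart forms are Cartan calculi on the same Lie-Rinehart data $(\calO_X, \calX(X))$.

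Before invoking initiality and terminality, I would check that the three calculi share the same underlying $\calO_X$-module of vector fields and the same action on $\Omega^0 = \calO_X$. This matters because Propositions~\ref{prop:KaehlerInitial} and~\ref{prop:LRterminal} are stated for Cartan calculi whose vector fields are $\calX(X)$ with its $\calO_X$-module structure~\eqref{eq:fvDef}. For cubical forms, $\Omega^0_\mathrm{cub}(X) = \Hom(X,R) = \calO_X$ by Definition~\ref{def:CubeForms}. The module structure on $\calX(X)$ is the given one. Formula~\eqref{eq:LieDerFormula} shows $\Lie_v f = \Val_R\circ Tf\circ v = v\cdot f$, so the induced action agrees with~\eqref{eq:VecFieldFunc}. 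Finally, the algebra structure on $\Omega_\mathrm{cub}$ over $\calO_X$ is the wedge product with degree-$0$ elements, as observed at the end of Section~\ref{sec:ProofRing}.

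With this in place, Proposition~\ref{prop:KaehlerInitial} applied to the cubical calculus gives a unique graded $\calO_X$-algebra morphism $\phi\colon \Omega_\text{K\"ah}(X)\to\Omega_\mathrm{cub}(X)$ intertwining $d$ and $\iota$. Explicitly, $\phi(d_\mathrm{K} f) = df = \Val_R\circ Tf$. Likewise, Proposition~\ref{prop:LRterminal} gives a unique morphism $\psi\colon\Omega_\mathrm{cub}(X)\to\Omega_\mathrm{LR}(X)$, explicitly $\psi(\omega)(v_1,\dots,v_n) = \iota_{v_n}\cdots\iota_{v_1}\omega$.

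Here a morphism of Cartan calculi means a morphism of graded $\calO_X$-algebras commuting with $d$ and all $\iota_v$, the identity on $\calX(X)$. Such maps then automatically commute with $\Lie_v = [\iota_v,d]$. Uniqueness of the sequence is then exactly the uniqueness clauses of the two propositions.

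The only point needing care is the consistency check of the preceding paragraph. In particular, $\psi(\omega)$ must land in $\calO_X$-multilinear maps. This uses $\iota_{fv} = f\wedge\iota_v$ from Proposition~\ref{prop:InnDerProp}, which is precisely where the no-$2$-torsion hypothesis is needed, together with antisymmetry from $[\iota_v,\iota_w]=0$. I expect no further obstacle, since these were already verified in proving Theorem~\ref{thm:CartanCube}.
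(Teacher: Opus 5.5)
Your proposal is correct and matches the paper's proof, which simply applies Propositions~\ref{prop:KaehlerInitial} and~\ref{prop:LRterminal} to the Cartan calculus of Theorem~\ref{thm:CartanCube}(ii), with uniqueness coming from their uniqueness clauses. Your extra checks that the cubical calculus lives over the same data ($\Omega^0_\mathrm{cub}(X)=\calO_X$, and $\Lie_v f = v\cdot f$ by~\eqref{eq:LieDerFormula}) make explicit what the paper leaves implicit. One small correction: the no-$2$-torsion hypothesis is needed for $[\iota_v,\iota_w]=0$ as well as for $\iota_{fv}=f\wedge\iota_v$, and once Theorem~\ref{thm:CartanCube}(ii) is available it is not used again.
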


\begin{proof}
The first morphism is given by Proposition~\ref{prop:KaehlerInitial}, the second by Proposition~\ref{prop:LRterminal}, applied in both cases to the Cartan calculus of differential cubical forms of Theorem~\ref{thm:CartanCube}.
\end{proof}

It is a natural question for future research in which cases the comparison maps~\eqref{eq:CompMaps} are quasi-isomorphisms.

\subsection{The issue of de Rham forms}
\label{sec:DeRhamForms}

Assume that $X$ is an object in a cartesian tangent category $\calC$ with scalar $R$-multiplication. The symmetric group $S_n$ acts naturally by permutation on the $n$-fold product of objects in any category. In particular, it acts on the products in the slice category $\calC_{/X}$, which are the fiber products over $X$. We denote the action of $\sigma \in S_n$ by $\sigma_{/X}$.

\begin{Definition}
A morphism $\omega\colon T_n X \to R$ is \textbf{antisymmetric} if
\begin{equation*}
  \omega \circ \sigma_{/X} = (-1)^{|\sigma|} \omega
\end{equation*}
for all $\sigma \in S_n$.
\end{Definition}

The natural morphisms of addition and scalar multiplication \textbf{at the $i$-th position} will be denoted by
\begin{align*}
  +^{(i)}_{/X}
  &\colon
  T_{n+1} X \longrightarrow T_n X
  \\
  \kappa^{(i)}_{/X}
  &\colon
  R \times T_n X \longrightarrow T_n X
  \,,
\end{align*}
and are defined by
\begin{align}
  \label{eq:TfibaddAti}
  +^{(i)}_{/X}
  &\coloneqq
  \id_{T_{i-1}X} \times (+_X) \times \id_{T_{n-i}X}
  \\
  \label{eq:RfibmultAti}
  \kappa^{(i)}_{/X}
  &\coloneqq
  \id_{T_{i-1}X} \times \kappa_X \times \id_{T_{n-i}X}
  \,.
\end{align}

\begin{Definition}
\label{def:DeRhamFormAti}
A morphism $\omega\colon T_n X \to R$ will be called \textbf{additive at the $i$-th position} if the diagram
\begin{equation}
\label{diag:DeRhamForm1}
\begin{tikzcd}[column sep=2em]
T_{n+1}X
\ar[d, "+^{(i)}_{/X}"']
\ar[r, "{ (\id_{T_{i-1}X} \times \pr_1 \times \id_{T_{n-i}X},\, \id_{T_{i-1}X} \times \pr_2 \times \id_{T_{n-i}X}) }"]
&[17em]
T_n X \times T_n X
\ar[r, "\omega \times \omega"]
&
R \times R
\ar[d, "\Radd"]
\\
T_n X
\ar[rr, "\omega"']
&&
R
\end{tikzcd}
\end{equation}
commutes and \textbf{$R$-equivariant at the $i$-th position} if
\begin{equation}
\label{diag:DeRhamForm2}
\begin{tikzcd}
R \times T_n X
\ar[d, "\kappa^{(i)}_{/X}"']
\ar[r, "{\id_R \times \omega}"]
&
R \times R
\ar[d, "\Rmult"]
\\
T_n X
\ar[r, "\omega"']
&
R
\end{tikzcd}
\end{equation}
commutes.
\end{Definition}

\begin{Definition}
\label{def:DeRhamForms}
An antisymmetric morphism $\omega\colon T_n X \to R$ that is additive and $R$-equivariant at the $i$-th position for all $1 \leq i \leq n$ will be called a \textbf{de Rham $n$-form} on $X$. The space of all de Rham $n$-forms on $X$ will be denoted by $\Omega^n_\mathrm{dR}(X)$.
\end{Definition}

The wedge product of de Rham forms $\alpha \in \Omega^p_\mathrm{dR}(X)$ and $\beta \in \Omega^q_\mathrm{dR}(X)$ is the antisymmetrization of the morphism
\begin{equation*}
  T_{p+q} X \xrightarrow{~\cong~}
  T_p X \times_X T_q X
  \xrightarrow{~\alpha \times \beta~}
  R \times R \xrightarrow{~\Rmult~}
  R
  \,,
\end{equation*}
given by the signed summation over the $(p,q)$-unshuffles. This equips $\Omega_\mathrm{dR}(X)$ with the structure of a commutative graded $\calO_X$-algebra. The inner derivative of a de Rham $n$-form $\omega$ with respect to a vector field $v$ is given by
\begin{equation*}
  \iota_v^\mathrm{dR} \omega:
  T_{n-1} X \xrightarrow{~\cong~}
  X \times_X T_{n-1} X \xrightarrow{~v \times \id_{T_{n-1}X}~}
  T_n X \xrightarrow{~\omega~}
  R
  \,.
\end{equation*}
The issue is that there is no clear intrinsic way to define a differential on de Rham forms.

Two possible approaches to solve this issue come to mind. The first approach is the observation that we have a natural morphism of graded $\calO_X$-algebras
\begin{equation*}
  \rho:
  \Omega_\mathrm{dR}(X)
  \longrightarrow
  \Omega_\mathrm{LR}(X)
\end{equation*}
defined in degree $n$ by
\begin{equation*}
  \bigl(\rho(\omega)\bigr)(v_1, \ldots, v_n)
  \coloneqq
  \iota^\mathrm{dR}_{v_n} \cdots \iota^\mathrm{dR}_{v_1} \omega
  \,.
\end{equation*}
In order for de Rham forms to inherit a Cartan calculus from the Cartan calculus on Lie-Rinehart forms, $\rho$ must be injective. We do not see how to obtain this property without leaving the framework of tangent categories, e.g.~by adding sheaf-theoretic properties of forms and vector fields.

The second approach that comes to mind is to map de Rham forms to cubical forms, by pulling them back by the universal morphism
\begin{equation*}
  (\mu_n)_X \coloneqq (T\pi^{n-1}, \pi T \pi^{n-2}, \ldots, \pi^{n-1}T)_X:
  T^n X \longrightarrow T_n X
  \,.
\end{equation*}
It is straightforward to show that $(\mu_n)_X$ is $S_n$-equivariant and preserves linearity at every position. In addition, the cubical forms are required to satisfy the additional condition of differentiability~\eqref{eq:CubDiffDef}. This imposes an additional condition on the de Rham forms. Moreover, for de Rham forms to inherit a differential and, therefore, a Cartan calculus from cubical forms, the pullback $(\mu_2)_X^*$ has to be injective. It is not clear how to impose this property within the framework of tangent categories.

Tentatively, we could require the morphism
\begin{equation*}
\begin{tikzcd}[column sep=5em]
T_2 \times_{T}^{\pr_1, \pi T} T^2
\ar[r, shift left=4pt, "\pr_2"]
\ar[r, shift right=4pt, "(+T) \circ (\lambda_2 \times \id_{T^2})"']
&
T^2
\ar[r, "{(T\pi, \pi T)}"]
&
T_2
\end{tikzcd}
\end{equation*}
to be a pointwise coequalizer. This means that for every $X$ the sequence
\begin{equation*}
  T_2 X \xrightarrow{~(\lambda_2)_X~}
  T^2 X \xrightarrow{~(T\pi, \pi T)_X~} T_2 X
\end{equation*}
is an exact sequence of bundles of abelian groups over $TX$. We posit that this implies that the map $(\mu_2)_X^*$ from de Rham 2-forms to differential cubical 2-forms is an isomorphism. Generalizing this property to higher forms is a task for future research.

\section{Examples}

\label{sec:Examples}

In Section~7 of \cite{AintablianBlohmann:2026} a long list of examples for tangent categories with scalar multiplications and their Cartan calculi of Lie-Rinehart forms was given. Here, we will go through this list and explain in which cases the differential cubical forms differ from the Lie-Rinehart forms.

\subsection{Smooth manifolds}
\label{sec:SmoothMflds}

The guiding example of a cartesian tangent category with scalar multiplication is the category of smooth manifolds, the ring of scalars being given by $\bbR$. This is explained in detail in Section~7.1 of \cite{AintablianBlohmann:2026}. On a smooth manifold $X$, the sheaf of vector fields is locally free and finitely generated. As a consequence, $C^\infty(X)$-linear functions on vector fields are in natural bijection with fiberwise linear maps on the tangent bundle $TX$. It follows that Lie-Rinehart forms are in natural bijection with de Rham forms $\Omega_\mathrm{LR}(X) \cong \Omega_\mathrm{dR}(X)$. What about the relation to differential cubical forms?

Let $X$ be an $n$-dimensional smooth manifold and $U \subset \bbR^n$ a local chart. The tangent functor is given by $TU = U \times \bbR^n$, which iterates to $T^k U = U \times (\bbR^n)^{2^k-1}$. The $k$-fold fiber product of the tangent bundle is given by $T_k U \cong U \times (\bbR^n)^k$. By definition, a cubical $1$-form is a fiberwise $\bbR$-linear map $TU \to \bbR$, which is the same thing as a de Rham $1$-form. Since on a finite-dimensional $\bbR$-vector space, the differential of a map is defined to be the linear approximation, the differential of a linear map is the map itself. It follows that every cubical 1-form is differential.

For $2$-forms, the domains of cubical and de Rham forms are different. Let us choose coordinates
\begin{align*}
  (x^i, x_1^i, x_2^i, x_{12}^i)
  &\in
  U \times \bbR^n \times \bbR^n \times \bbR^n
  = T^2 U
  \\
  (x^i, x_1^i, x_2^i)
  &\in
  U \times \bbR^n \times \bbR^n
  = T_2 U
  \,.
\end{align*}
In the following, we will omit the index $i \in \{1, \ldots, n\}$ from the notation. A cubical $2$-form $\omega: T^2 U \to \bbR$ is antisymmetric,
\begin{align*}
  \omega(x, x_1, x_2, x_{12})
  &= - \omega(x, x_2, x_1, x_{12})
  \\
\intertext{and linear at the first position,}
  \omega(x, x_1 + y_1, x_2, x_{12} + y_{12})
  &=  \omega(x, x_1, x_2, x_{12})
    + \omega(x, y_1, x_2, y_{12})
  \\
  \omega(x, r x_1, x_2, r x_{12})
  &= r \omega(x, x_1, x_2, x_{12})
  \,,
  \\
\intertext{which implies that it is linear at the second position,}
  \omega(x, x_1, x_2 + y_2, x_{12} + y_{12})
  &=  \omega(x, x_1, x_2, x_{12})
    + \omega(x, x_1, y_2, y_{12})
  \\
  \omega(x, x_1, r x_2, r x_{12})
  &= r \omega(x, x_1, x_2, x_{12})
  \,.
\end{align*}
Since $\bbR$ has no $2$-torsion, Lemma~\ref{lem:FormLambdaZero} shows that $\omega \circ \lambda_U = 0$,
\begin{equation*}
  \omega(x,0,0,x_{12}) = 0
  \,.
\end{equation*}
It follows that
\begin{equation*}
\begin{split}
  \omega(x,x_1,x_2,x_{12})
  &= \omega(x,x_1,x_2,0) + \omega(x,x_1,0,x_{12})
  \\
  &= \omega(x,x_1,x_2,0) + \omega(x,x_1,0,0) + \omega(x,0,0,x_{12})
  \\
  &= \omega(x,x_1,x_2,0)
  \,.
\end{split}
\end{equation*}
We conclude that $\omega$ can be identified with an antisymmetric, fiberwise bilinear map $T_2 U \to \bbR$, that is, a de Rham $2$-form.

For the general case, we observe that the $2^k$ coordinates $(x,x_1, \ldots, x_{12\cdots k})$ of $T^k U$ are labelled uniquely by the subsets of $\{1, \ldots, k\}$. The action of the symmetric group permutes the indices, that is, $\sigma \in S_k$ maps $x_{i_1, \ldots, i_r}$ to $x_{\sigma(i_1), \ldots, \sigma(i_r)}$, while the form $\omega$ changes the sign by $(-1)^{|\sigma|}$. Repeating the argument given above for $k=2$ at every position, we conclude that $\omega$ does not depend on any coordinate that has more than one index, but only on $x_1, \ldots, x_k$. It follows that a cubical $k$-form can be identified with a de Rham $k$-form for all $k \geq 0$,
\begin{equation*}
  \Omega_\mathrm{cub}(X) \cong \Omega_\mathrm{dR}(X)
  \cong \Omega_\mathrm{LR}(X)
  \,.
\end{equation*}
We conclude that in the category of smooth manifolds the Cartan calculus of differential cubical forms is isomorphic to the Cartan calculus of Lie-Rinehart forms.

\subsection{Examples based on smooth manifolds}

Categories that are based on smooth finite-dimensional manifolds typically inherit the property that the cubical forms, the de Rham forms, and the Lie-Rinehart forms are in bijection so that the Cartan calculi are isomorphic. In particular, this is true for the following examples:
\begin{itemize}

\item $G$-manifolds for a fixed Lie group $G$ \cite[\S 7.2]{AintablianBlohmann:2026}

\item Lie groupoids \cite[\S 7.3]{AintablianBlohmann:2026}

\item Log manifolds, also called $b$-manifolds \cite[\S 7.4]{AintablianBlohmann:2026}

\item Pro-manifolds \cite[\S 7.5]{AintablianBlohmann:2026}

\end{itemize}
It is straightforward to come up with more examples by generalizing and combining these cases.

\subsection{Affine schemes}
\label{sec:AffSchemes}

In the category of affine schemes $\Aff_k \simeq \CAlg_k^\op$, differential cubical forms are generally different from Lie-Rinehart forms. The tangent structure of affine schemes was introduced and studied in \cite[\S 4.1]{CruttwellLemay:2023}. The tangent functor is given by
\begin{equation*}
  T\Spec(A)
  \coloneqq \Spec\bigl( \Sym_A(\Omega^1_{A/k}) \bigr)
  \,,
\end{equation*}
where $\Omega^1_{A/k}$ is the $A$-module of K\"ahler differentials of the commutative $k$-algebra $A$ and $\Sym_A: \Mod_A \to \CAlg_A$ is the left adjoint to the forgetful functor $\CAlg_A \to \Mod_A$. Let $\Spec(A) \to \Spec(B)$ be a morphism of affine schemes given by a morphism of algebras $\phi: B \to A$. Then the tangent morphism $T\Spec(A) \to T\Spec(B)$ is given by
\begin{align*}
  \Sym_B(\Omega^1_{B/k})
  &\longrightarrow
  \Sym_A(\Omega^1_{A/k})
  \\
  b &\longmapsto \phi(b)
  \\
  db &\longmapsto d\bigl( \phi(b) \bigr)
  \,.
\end{align*}
The fiberwise addition $T_2 \Spec(A) \to T\Spec(A)$ is given by the morphism of algebras
\begin{align*}
  +_A:
  \Sym_A(\Omega^1_{A/k})
  &\longrightarrow
  \Sym_A(\Omega^1_{A/k}) \otimes_A \Sym_A(\Omega^1_{A/k})
  \\
  da &\longmapsto da \otimes 1 + 1 \otimes da
  \,.
\end{align*}
The unitality, $1 \mapsto 1 \otimes_A 1$, implies that $a \mapsto a (1 \otimes_A 1) = a \otimes_A 1$.

In \cite[\S 7.7]{AintablianBlohmann:2026}, the scalar multiplication $\bbA_k \times T\Spec(A) \to T\Spec(A)$ by the affine line $\bbA_k = \Spec(k[x])$ was added to the tangent structure, which is given by the morphism of algebras
\begin{align*}
  \kappa_A: \Sym_A(\Omega^1_{A/k})
  &\longrightarrow k[x] \otimes \Sym_A(\Omega^1_{A/k})
  \\
  a
  &\longmapsto 1 \otimes a
  \\
  da
  &\longmapsto x \otimes da
  \,.
\end{align*}

We will now describe cubical $1$-forms. A morphism $\omega$ in
\begin{equation*}
  \Hom\bigl( T\Spec(A), \bbA_k \bigr)
  \cong \CAlg_k\bigl( k[x], \Sym_A(\Omega^1_{A/k})\bigr)
\end{equation*}
is uniquely determined by the value $\omega(x) \in \Sym_A(\Omega^1_{A/k})$ of the generator. The multiplication of $\bbA_k$ is given by the comultiplication $\Delta_{\Rmult}: k[x] \to k[x] \otimes k[x]$, $x \mapsto x \otimes x$. It follows that a morphism $T\Spec(A) \to \bbA_k$ given by $\omega(x) \in \Sym_A(\Omega^1_{A/k})$ is $\bbA_k$-equivariant if and only if
\begin{equation*}
  \kappa_A\bigl(\omega(x) \bigr)
  = x \otimes \omega(x)
  \,,
\end{equation*}
which is the case if and only if $\omega(x)$ is linear in the symbols $da$, that is, $\omega(x) \in \Omega^1_{A/k} \subset \Sym_A(\Omega^1_{A/k})$. Similarly, the addition of $\bbA_k$ is given by the coaddition $\Delta_{\Radd}: k[x] \to k[x] \otimes k[x]$, $x \mapsto x \otimes 1 + 1 \otimes x$. It follows that the morphism $\omega$ is additive if and only if
\begin{equation*}
  +_A\bigl(\omega(x)\bigr) = \omega(x) \otimes 1 + 1 \otimes \omega(x)
  \,.
\end{equation*}
Again, this is the case if and only if $\omega(x) \in \Omega^1_{A/k} \subset \Sym_A(\Omega^1_{A/k})$. We conclude that the $A$-module of cubical $1$-forms can be identified with $\Omega^1_{A/k}$.

\begin{Lemma}
\label{lem:Schemes1}
Every cubical $1$-form on $\Spec(A)$ is differential.
\end{Lemma}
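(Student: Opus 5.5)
The plan is to translate the condition~\eqref{eq:CubDiffDef} for $n=1$, namely $\Val_R \circ T\omega \circ \lambda_{X} = \omega$ with $X = \Spec(A)$ and $R = \bbA_k$, into a statement about morphisms of $k$-algebras in $\CAlg_k$. We then check it on the single generator $x \in k[x]$. By the preceding discussion, a cubical $1$-form $\omega$ is determined by $\alpha \coloneqq \omega(x) \in \Omega^1_{A/k} \subset \Sym_A(\Omega^1_{A/k})$. We write $\alpha = \sum_i a_i\, db_i$ with $a_i, b_i \in A$. Each of the three morphisms in the composite is dual to an explicit algebra map, so the condition becomes an identity between two elements of $\Sym_A(\Omega^1_{A/k})$.

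First, I will make the three dual maps explicit. The algebra of $T^2\Spec(A)$ is $\Sym_B(\Omega^1_{B/k})$ with $B = \Sym_A(\Omega^1_{A/k})$. It is generated by elements $a$, $da$, $\delta a$, $\delta(da)$ for $a \in A$, where $\delta$ denotes the outer K\"ahler differential. Following the tangent structure of \cite[\S 4.1]{CruttwellLemay:2023}, the vertical lift $\lambda_A$ is dual to the algebra map
\begin{equation*}
  a \mapsto a, \qquad da \mapsto 0, \qquad \delta a \mapsto 0, \qquad \delta(da) \mapsto da .
\end{equation*}
This map puts the tangent vector into the ``mixed'' slot. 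Next, $T\omega$ is dual to $x \mapsto \alpha$, $dx \mapsto \delta\alpha$. Finally, the fiber projection $\Val_R\colon T\bbA_k \cong \bbA_k \times \bbA_k \to \bbA_k$ is dual to $k[x] \to k[x, dx]$, $x \mapsto dx$.

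Composing contravariantly sends $x \mapsto dx \mapsto \delta\alpha$. By the Leibniz rule in $\Omega^1_{B/k}$ we have
\begin{equation*}
  \delta\alpha = \sum_i \bigl( \delta a_i \cdot db_i + a_i\, \delta(db_i) \bigr).
\end{equation*}
Applying the dual of $\lambda_A$ kills the first summand, because $\delta a_i \mapsto 0$. It sends the second summand to $a_i\, db_i$. Hence the image of $x$ is $\sum_i a_i\, db_i = \alpha = \omega(x)$. Since algebra maps out of $k[x]$ are determined by the image of $x$, this proves $\Val_R \circ T\omega \circ \lambda_X = \omega$.

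The main obstacle I expect is the first step: pinning down correctly the algebraic descriptions of $\lambda$ and of the trivialization $T\bbA_k \cong \bbA_k \times \bbA_k$ in the conventions of \cite{CruttwellLemay:2023} and \cite[\S 7.7]{AintablianBlohmann:2026}. In particular, I need to determine which of $da$ and $\delta a$ is sent to zero and which generator the mixed term $\delta(da)$ goes to. To settle this, I would verify the proposed formula for $\lambda$ against the axioms. For example, I would check $\pi T \circ \lambda = 0 \circ \pi$ from~\eqref{eq:TanFun3} and $\tau \circ \lambda = \lambda$ from~\eqref{eq:TanFun5}, where $\tau$ swaps $d$ and $\delta$. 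I would also test the whole computation on $A = k[y]$, where it must reproduce the identity on $y\,dy$-type forms. The identification of $\Val_R$ as $x \mapsto dx$ follows from $T\Spec(k[x]) = \Spec(k[x,dx])$.
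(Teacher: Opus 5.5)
Your proposal is correct and is essentially the paper's proof. The paper presents $T^2\Spec(A)$ by generators $a, d_1a, d_2a, d_{12}a$ (your $\delta a$, $da$, $\delta(da)$) and uses the same dual maps for $\lambda_A$, $T\omega$ and $\Val_{k[x]}\colon x\mapsto dx$. It then computes $x \mapsto dx \mapsto \sum_i (d_1a_i)(d_2b_i) + a_i\,d_{12}b_i \mapsto \sum_i a_i\,db_i = \omega(x)$ exactly as you do, and it likewise takes the formula for the vertical lift as given rather than carrying out the convention check you flag as the main obstacle.
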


\begin{proof}
Let $\Omega^{(2)}_{A/k}$ denote the $A$-module generated by symbols $a$, $d_1 a$, $d_2 a$, and $d_{12} a$ for all $a \in A$, subject to the relations
\begin{equation}
\label{eq:Schemes1}
  d_i (1) = 0 \,,\quad
  d_i(a + b) = d_i a + d_i b \,,\quad
  d_i(ab) =  d_i(a)\, b + a \, d_i(b) \,,\quad
  d_i^2(a) = 0 \,,
\end{equation}
for $i \in \{1,2\}$ and
\begin{equation*}
  d_1 (d_2 a)
  = d_{12} a
  = d_2 (d_1 a)
  \,.
\end{equation*}
Then
\begin{equation*}
  T^2 \Spec(A) =  \Spec\bigl( \Sym_A(\Omega^{(2)}_{A/k}) \bigr)
  \,.
\end{equation*}
The vertical lift $T\Spec(A) \to T^2 \Spec(A)$ is the homomorphism
\begin{equation*}
  \lambda_A: \Sym_A(\Omega^{(2)}_{A/k}) \to \Sym_A(\Omega^1_{A/k})
\end{equation*}
given by $a \mapsto a$, $d_1 a \mapsto 0$, $d_2 a \mapsto 0$, and $d_{12} a \mapsto da$. The tangent scheme $T\bbA_k$ of the affine line is given by
\begin{equation*}
\begin{split}
  T\bbA_k
  &= \Spec\bigl( \Sym_{k[x]}(\Omega^1_{k[x]/k}) \bigr)
  \cong \Spec\bigl( k[x, dx] \bigr)
  \cong \Spec\bigl( k[x] \otimes k[dx] \bigr)
  \\
  &\cong
  \bbA_k \times \bbA_k
  \,.
\end{split}
\end{equation*}
The morphism $T\bbA_k \to \bbA_k$ to the tangent fiber is given by
\begin{align*}
  \Val_{k[x]}: k[x]
  &\longrightarrow k[x, dx]
  \\
  x
  &\longmapsto dx
  \,.
\end{align*}
The tangent morphism of a cubical $1$-form given by $\omega(x) = \sum_i a_i db_i$ is
\begin{align*}
  T\omega: k[x,dx]
  &\longrightarrow \Sym_A(\Omega^{(2)}_{A/k})
  \\
  x &\longmapsto \sum_i a_i\, d_2 b_i
  \\
  dx &\longmapsto \sum_i (d_1 a_i)(d_2 b_i) + a_i\, (d_{12} b_i)
  \,.
\end{align*}
Now we can compute
\begin{equation*}
\begin{split}
  (\lambda_A \circ T\omega \circ \Val_{k[x]})(x)
  &=
  (\lambda_A \circ T\omega)(dx)
  \\
  &= \lambda_A\Bigl(
  \sum_i (d_1 a_i)(d_2 b_i) + a_i\, (d_{12} b_i) \Bigr)
  \\
  &=
  \sum_i a_i\, db_i
  \\
  &= \omega(x)
  \,,
\end{split}
\end{equation*}
which shows that $\omega$ is differential.
\end{proof}

With Lemma~\ref{lem:Schemes1} we conclude that the differential cubical 1-forms on an affine scheme are the K\"ahler differentials,
\begin{equation*}
  \Omega^1_\mathrm{cub}\bigl( \Spec(A) \bigr)
  \cong
  \Omega^1_{A/k}
  \,.
\end{equation*}
By contrast, the Lie-Rinehart $1$-forms are given by
\begin{equation*}
  \Omega^1_\mathrm{LR}\bigl( \Spec(A) \bigr)
  =
  \Hom_A\bigl( \Der_k(A), A \bigr)
  \cong
  \Hom_A\bigl( \Hom_A(\Omega^1_{A/k}, A), A \bigr)
  \,.
\end{equation*}
The natural map
\begin{equation*}
  \Omega^1_{A/k} \longrightarrow
  \Hom_A\bigl( \Hom_A(\Omega^1_{A/k}, A), A \bigr)
\end{equation*}
from the $A$-module of K\"ahler differentials to its bidual is generally not an isomorphism. (A sufficient condition is that $A$ is smooth over $k$, which implies that $\Omega^1_{A/k}$ is finitely generated projective \cite[\S 7.7]{AintablianBlohmann:2026}.) This shows that differential cubical forms and Lie-Rinehart forms on affine schemes are generally different.

\begin{Proposition}
\label{prop:SchemeCubKaehler}
Assume that $\mathrm{char}(k) \neq 2$. Then the $A$-module $\Omega^n_{A/k} \coloneqq \wedge^n_A \Omega^1_{A/k}$ of K\"ahler $n$-forms on $\Spec(A)$ is naturally isomorphic to the $A$-module of differential cubical $n$-forms,
\begin{equation*}
  \Omega_\mathrm{cub}^n \bigl( \Spec(A) \bigr)
  \cong
  \Omega^n_{A/k}
  \,,
\end{equation*}
for all $n \geq 0$.
\end{Proposition}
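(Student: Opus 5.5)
The plan is to work entirely on the algebraic side, $\Aff_k \simeq \CAlg_k^\op$, and to describe $T^n\Spec(A)$ explicitly. Generalizing the description of $T^2\Spec(A)$ in the proof of Lemma~\ref{lem:Schemes1}, we have $T^n\Spec(A) = \Spec(\Sym_A(\Omega^{(n)}_{A/k}))$. Here $\Omega^{(n)}_{A/k}$ is generated by symbols $d_S a$ for nonempty $S \subseteq \{1,\ldots,n\}$. These symbols are subject to the iterated derivation relations: the $d_i$ commute, $d_i^2 = 0$, $d_S = d_{i_1}\cdots d_{i_r}$, and each $d_i$ satisfies the Leibniz rule. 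Strictly speaking, $T^n\Spec(A)$ is $\Spec$ of the $A$-algebra generated by these symbols, since the Leibniz rule expresses $d_S(ab)$ as a sum of products $d_{S_1}a\, d_{S_2}b$. The symmetric group acts by permuting indices. Addition and scalar multiplication at position $i$ act on the $i$-grading: $\kappa^{(i)}$ sends $d_S a \mapsto x\otimes d_S a$ for $i\in S$ and $d_S a\mapsto 1\otimes d_Sa$ otherwise. A morphism $\omega: T^n\Spec(A)\to \bbA_k$ is an element $\omega(x)$ of this algebra.

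The key steps are as follows. First, I would show that $R$-equivariance at every position $i$ forces $\omega(x)$ to be multihomogeneous of degree exactly one in each index $i$. As in the $1$-form computation, this uses that $\kappa^{(i)}(\omega(x)) = x\otimes\omega(x)$ isolates the part of degree one in the $i$-grading. Additivity at each position then also holds automatically, as in degree one. Such elements are $A$-linear combinations of monomials $d_{S_1}a_1\cdots d_{S_r}a_r$, where $\{S_1,\ldots,S_r\}$ is a partition of $\{1,\ldots,n\}$. Second, I would use Lemma~\ref{lem:FormLambdaZero}, which needs $\mathrm{char}(k)\neq2$, to kill all monomials containing a block $S_j$ with $|S_j|\ge 2$. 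The idea is that $\omega\circ T^{i-1}\lambda = 0$ sets certain pairs $d_i,d_{i+1}$ to collapse into $d_{\{i,i+1\}}$. Combining this with antisymmetry, as in the smooth manifold case of Section~\ref{sec:SmoothMflds}, any coefficient of a monomial with a non-singleton block can be transported to one detected by $\lambda$ and hence vanishes. What remains is an element $\sum a\, d_1a_1\cdots d_na_n$ that is antisymmetric under $S_n$. Third, I would identify these elements with $\wedge^n_A\Omega^1_{A/k}$ via the map $a_0\,da_1\wedge\cdots\wedge da_n \mapsto \sum_{\sigma}(-1)^{|\sigma|}a_0\, d_{\sigma(1)}a_1\cdots d_{\sigma(n)}a_n$. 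The subspace spanned by $d_1a_1\cdots d_na_n$ is $(\Omega^1)^{\otimes_A n}$, multilinear in the distinct indices. Its antisymmetric part is isomorphic to $\wedge^n_A\Omega^1$ when $2$ is invertible; I would argue this via the antisymmetrization idempotent, or more carefully via the injectivity of $\wedge^n \to \otimes^n$ given by antisymmetrization, which may need $n!$ invertible. If it does, I would instead show directly that antisymmetric tensors are exactly the images of antisymmetrization, using that the kernel of multiplication by $2$ vanishes. Fourth, I would check the differential condition \eqref{eq:CubDiffDef} by the same computation as in Lemma~\ref{lem:Schemes1}. Here $T\omega$ sends $dx$ to the result of applying a new $d_{0}$, and $\lambda$ sets $d_0\mapsto 0$, $d_1\mapsto 0$, $d_{01}\mapsto d_1$. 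This returns $\omega(x)$ because $\omega$ is linear in index $1$. Naturality in $A$ is clear from the construction.

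I expect the main obstacle to be the second step, together with the torsion subtlety in the third. The second step requires showing rigorously that the monomials with a block of size at least two are linearly independent from the others, so that their coefficients can be isolated, and that Lemma~\ref{lem:FormLambdaZero} annihilates them. I would try first to prove a normal-form statement: the multidegree $(1,\ldots,1)$ part of the algebra is $\bigoplus_{\text{partitions}}\bigotimes_{A}\Omega^1$ over the blocks, with $d_S a$ playing the role of $da$ for each block. This comes from the universal property of iterated Kähler differentials. Once this normal form is established, the map $\lambda$ at positions $i,i+1$ acts transparently on the summands, and the vanishing follows.
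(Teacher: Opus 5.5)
Your outline matches the paper's sketch: the explicit description of $T^n\Spec(A)$, equivariance forcing exactly one $d_i$ per index, antisymmetry with $\mathrm{char}(k)\neq 2$ killing factors $d_Sa$ with $|S|\geq 2$, antisymmetric multilinear elements giving $\Omega^n_{A/k}$, and the differential condition checked as in Lemma~\ref{lem:Schemes1}. You kill the non-singleton blocks through Lemma~\ref{lem:FormLambdaZero} rather than through the paper's remark that a transposition inside $S$ fixes $d_Sa$. That is the same idea ($\tau\circ\lambda=\lambda$) in a better form: $\omega\circ\lambda=0$ is an honest identity, whereas ``the coefficient of a monomial containing $d_Sa$'' is not well defined.

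The step that would fail as you state it is the normal form you plan to prove first. For $|S|\geq 2$ the operator $d_S$ is not a derivation; for instance $d_{12}(ab)=a\,d_{12}b+b\,d_{12}a+d_1a\,d_2b+d_2a\,d_1b$. So the multidegree-$(1,\ldots,1)$ part does not decompose as $\bigoplus_P\bigotimes_{S\in P}\Omega^1_{A/k}$ with $d_Sa$ corresponding to $da$. It fails even abstractly. For $A=k[t]/(t^2)$ one has $\Omega^1_{A/k}\cong k$, while the bidegree-$(1,1)$ part of $\mathcal{O}(T^2\Spec(A))$ is spanned by $d_{12}t$ and $d_1t\,d_2t=-t\,d_{12}t\neq 0$. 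It is therefore free of rank one over $A$: a non-split extension of $\Omega^1_{A/k}$ by $\Omega^1_{A/k}\otimes_A\Omega^1_{A/k}$, not $k\oplus k$.

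What is true is a filtration, and your $\lambda$-argument works along it.
\begin{itemize}
\item Let $F_k$ be the $A$-span of the monomials $d_{S_1}a_1\cdots d_{S_r}a_r$ with at least $k$ blocks. Modulo $F_{k+1}$ every $d_S$ is a derivation, because the cross terms of the Leibniz rule split $S$. This gives a surjection $\bigoplus_{|P|=k}\bigotimes_{S\in P}\Omega^1_{A/k}\to F_k/F_{k+1}$.
\item For a $k$-block partition $P$, collapsing each block by iterated vertical lifts gives $\Lambda_P\colon T^k\Spec(A)\to T^n\Spec(A)$. A monomial survives the pullback $\Lambda_P^*$ only if its partition is coarser than or equal to $P$. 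Hence $\Lambda_P^*$ kills $F_{k+1}$ and every other $k$-block summand.
\item On the $P$-summand, $\Lambda_P^*$ is $\mu_k^*\colon(\Omega^1_{A/k})^{\otimes_A k}\to\mathcal{O}(T^k\Spec(A))$, $db_1\otimes\cdots\otimes db_k\mapsto d_1b_1\cdots d_kb_k$.
\item Lemma~\ref{lem:FormLambdaZero} together with antisymmetry gives $\omega\circ\Lambda_P=0$. Induction on $k$ then yields $\omega(x)\in F_n$, \emph{provided} $\mu_k^*$ is injective for $k<n$.
\end{itemize}
The same injectivity for $k=n$ is what lets you regard $\omega(x)$ as an antisymmetric tensor in your third step. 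This injectivity, not a direct-sum splitting, is the normal-form statement you actually need. It must be proved, since the example shows there is no retraction to use, and the paper's sketch (``summands with a factor $d_Ia$'') assumes it tacitly as well.

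Your caution in the third step is also justified. With only $2$ invertible, antisymmetric tensors agree with $\wedge^n_A\Omega^1_{A/k}$ when $\Omega^1_{A/k}$ is free, by a basis argument, and hence when it is flat, by passing to filtered colimits. The paper does not address general $A$.
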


\begin{proof}[Sketch of proof]
Let $\Omega^{(n)}_{A/k}$ denote the $A$-module generated by the symbols $d_I a$, for all subsets $I \subset \{ 1, \ldots, n\}$, where $d_\emptyset a \coloneqq a$, subject to the relations~\eqref{eq:Schemes1} for all singletons and the relations
\begin{equation*}
  d_I (d_J a)
  =
  \begin{cases}
    d_{I \cup J}\, a &; I \cap J = \emptyset \\
    0 &; \text{otherwise.}
  \end{cases}
\end{equation*}
Then
\begin{equation*}
  T^n \Spec(A) =  \Spec\bigl( \Sym_A(\Omega^{(n)}_{A/k}) \bigr)
  \,.
\end{equation*}
The action of the symmetric group on $\Omega^{(n)}_{A/k}$ is defined by
\begin{equation*}
  \sigma (d_I a) = d_{\sigma(I)} a
  \,,
\end{equation*}
for all $\sigma \in S_n$ and $a \in A$. This extends to a unique automorphism of the algebra $\Sym_A(\Omega^{(n)}_{A/k})$. Consider an antisymmetric morphism $T^n \Spec(A) \to \bbA_k$ given by a morphism of algebras
\begin{equation*}
  \omega: k[x] \longrightarrow \Sym_A(\Omega^{(n)}_{A/k})
  \,.
\end{equation*}
For every subset $I \subset \{1, \ldots, n\}$ with $|I| > 1$ there is a transposition $\tau_{ij} \in S_n$ that swaps two indices $i,j \in I$, so that $d_{\tau_{ij}(I)} a = d_I a$. Let $\omega(x) = d_I a$. Since $\omega$ is antisymmetric, $\tau_{ij} \circ \omega = - \omega$, so $\tau_{ij} (d_I a) = d_I a = - d_I a$. Since $A$ is an algebra over a field of characteristic $\neq 2$, it follows that $d_I a = 0$. In general, $\omega(x)$ cannot contain a summand with a factor $d_I a$, $|I| > 1$.

Moreover, $\omega$ is linear at the $i$-th position if and only if every term contains exactly one factor of the form $d_i a$. We conclude that every cubical $n$-form can be written as a finite sum
\begin{equation*}
  \omega(x) = \sum_i a_i (d_1 b_{i,1})(d_2 b_{i,2}) \cdots (d_n b_{i,n})
  \,.
\end{equation*}
The symmetric group acts by
\begin{equation*}
\begin{split}
  (\sigma\circ \omega)(x)
  &= \sum_i a_i (d_{\sigma(1)} b_{i,1}) \cdots (d_{\sigma(n)} b_{i,n})
  \\
  &= \sum_i a_i (d_1 b_{i,\sigma^{-1}(1)}) \cdots
  (d_n b_{i,\sigma^{-1}(n)})
  \,,
\end{split}
\end{equation*}
for all $\sigma \in S_n$. Since $\omega$ is antisymmetric, the summands must be antisymmetric in $b_{i,1}, \ldots, b_{i,n}$. This shows that $\omega(x)$ can be identified with an element of $\Omega^n_{A/k}$. Finally, it can be shown that every cubical $n$-form is differential by generalizing the proof of Lemma~\ref{lem:Schemes1}. This finishes the sketch of the proof.
\end{proof}

It is straightforward to show that the isomorphism of Proposition~\ref{prop:SchemeCubKaehler} intertwines the differentials and inner derivatives of the Cartan calculi of cubical and K\"ahler forms. This leads to the following statement.

\begin{Proposition}
\label{prop:SchemeCubCartan}
The Cartan calculus of differential cubical forms on affine schemes over fields of characteristic $\neq 2$ is isomorphic to the Cartan calculus of the algebraic de Rham complex $(\Omega^\bullet_{A/k}, d)$.
\end{Proposition}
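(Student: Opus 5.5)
The plan is to use the isomorphism $\Phi\colon \Omega^\bullet_{A/k} \to \Omega^\bullet_\mathrm{cub}(\Spec(A))$ of Proposition~\ref{prop:SchemeCubKaehler} and show that it is a morphism of Cartan calculi. Concretely, $\Phi$ sends $a\, db_1\wedge\cdots\wedge db_n$ to the form given by
\begin{equation*}
  \omega(x) = \sum_{\sigma\in S_n} (-1)^{|\sigma|}\, a\, (d_1 b_{\sigma(1)})\cdots(d_n b_{\sigma(n)}) .
\end{equation*}
The Lie algebra of vector fields is the same on both sides, namely $\Der_k(A)$. Since both sides are Cartan calculi (Theorem~\ref{thm:CartanCube} applies because $\mathrm{char}(k)\neq 2$ means $\bbA_k$ has no $2$-torsion), it suffices to check that $\Phi$ is an isomorphism of $A$-modules in each degree (already known) and that it is compatible with the wedge product, $d$, and $\iota_v$.

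The main tool is the initiality of Kähler forms, Proposition~\ref{prop:KaehlerInitial}. The Kähler calculus there is the free graded-commutative $\calO_X$-algebra on $d_\mathrm{K}f$, modulo the stated relations. For $\calO_X = A$ and $R=\bbA_k$ we have $\Hom(*,R)=k$, so its degree-one part is $\Omega^1_{A/k}$ and the whole algebra is $\wedge_A\Omega^1_{A/k}$. This identification uses $\mathrm{char}(k)\neq2$, since graded commutativity then forces odd elements to square to zero. We also need $\calX(\Spec A)=\Der_k(A)$, which follows from $T\Spec A=\Spec\Sym_A\Omega^1_{A/k}$ because sections correspond to $A$-linear maps $\Omega^1_{A/k}\to A$.

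Proposition~\ref{prop:KaehlerInitial} then provides a unique morphism $\phi\colon\Omega^\bullet_{A/k}\to\Omega_\mathrm{cub}(\Spec A)$ of graded $A$-algebras that intertwines $d$ and $\iota$. It remains to show $\phi=\Phi$, so that $\phi$ is bijective. Both maps are algebra maps, so it suffices to check on generators:
\begin{itemize}
\item In degree $0$ both are the identity on $A$.
\item In degree $1$, $\phi(db)=d_\mathrm{cub} b=\Val_R\circ Tb$. In the description of $T\omega$ from the proof of Lemma~\ref{lem:Schemes1}, this corresponds to $x\mapsto d_1 b$, which is $\Phi(db)$.
\end{itemize}

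The remaining check, which I expect to be the main obstacle, is that $\Phi$ is multiplicative, i.e.\ that the cubical wedge product~\eqref{eq:CubeWedge} of $\Phi(\alpha)$ and $\Phi(\beta)$ corresponds to the Kähler wedge. I would compute $\alpha\cdot\beta$ algebraically. The projections $T^p\pi^q$ and $\pi^pT^q$ correspond to the inclusions $\Omega^{(p)}\to\Omega^{(p+q)}$ given by $d_i\mapsto d_i$ and $d_i\mapsto d_{p+i}$, respectively. So $(\Phi\alpha\cdot\Phi\beta)(x)$ is the product of $\Phi\alpha(x)$ with $\Phi\beta(x)$ shifted into positions $p+1,\dots,p+q$. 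Summing over unshuffles with signs reproduces the standard shuffle formula for the antisymmetrization of $db_1\wedge\cdots\wedge db_{p+q}$. This is the usual combinatorial identity expressing the full antisymmetrizer as the antisymmetrizer over $S_p\times S_q$ followed by shuffles.

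With $\Phi$ multiplicative, $\Phi=\phi$ on generators and hence everywhere. Therefore $\phi$ is an isomorphism of Cartan calculi, identifying the cubical Cartan calculus with the algebraic de Rham complex $(\Omega^\bullet_{A/k},d)$ together with its contraction by derivations.
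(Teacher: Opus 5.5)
Your proposal is correct, but it takes a different route from the paper. The paper only asserts that one can check directly that the isomorphism of Proposition~\ref{prop:SchemeCubKaehler} intertwines the differentials and the inner derivatives. In the algebraic model, that check requires computing $\Val_R\circ T\omega$ and $\omega\circ T^{n-1}v$ on $\Sym_A(\Omega^{(n)}_{A/k})$, together with the shuffle sums, for forms of every degree.

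You instead use the initiality of the K\"ahler calculus (Proposition~\ref{prop:KaehlerInitial}). For this you identify $\Omega_{\text{K\"ah}}(\Spec A)$ with $\wedge^\bullet_A\Omega^1_{A/k}$, which is a second place where $\mathrm{char}(k)\neq 2$ enters, and $\calX(\Spec A)$ with $\Der_k(A)$. To complete that identification you should also note that $v\cdot f=\Val_R\circ Tf\circ v$ is the derivation corresponding to $v$, via $x\mapsto dx\mapsto df\mapsto D_v(f)$. Once this is in place, compatibility with $d$ and $\iota_v$ comes for free. The only computation left is that the iterated cubical wedge of the $1$-forms $x\mapsto d_1 b_i$ is the full antisymmetrization.

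Your route buys three things:
\begin{enumerate}
\item The degree-$n$ checks of $d$ and $\iota_v$ disappear entirely.
\item Compatibility with the wedge product is actually verified; the paper's one-line justification does not mention it.
\item The isomorphism is shown to be unique, since any isomorphism of Cartan calculi out of the K\"ahler calculus must equal $\phi$.
\end{enumerate}

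The price is that all the content sits in the bijectivity of your particular $\Phi$, which you import from Proposition~\ref{prop:SchemeCubKaehler}. The sketch of that proposition leaves the identification implicit, and the normalization matters. The other natural candidate, $\sum_i a_i(d_1b_{i,1})\cdots(d_nb_{i,n})\mapsto\sum_i a_i\,db_{i,1}\wedge\cdots\wedge db_{i,n}$, composes with your $\Phi$ to multiplication by $n!$. So in characteristic $p\le n$ only the antisymmetrization, which is your choice, can be an isomorphism, already for free $\Omega^1_{A/k}$. By initiality it is the only possible candidate in any case, so your reading of Proposition~\ref{prop:SchemeCubKaehler} is the consistent one.
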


\subsection{Elastic diffeological spaces}
\label{sec:Elastic}

The site of euclidean spaces $\Eucl$ is the category of open subsets of all $\bbR^k$, $k \geq 1$, and smooth maps as morphisms, equipped with the Grothendieck topology of open covers. The category of diffeological spaces $\Dflg$ is the category of concrete sheaves on $\Eucl$. The category of elastic diffeological spaces \cite{Blohmann:2024a} is a full subcategory of $\Dflg$ with a tangent structure given by the pointwise left Kan extension of the tangent structure on $\Eucl$ (Section~\ref{sec:SmoothMflds}).

The tangent functor on a diffeological space $X$ is the pointwise left Kan extension of the tangent functor on euclidean spaces,
\begin{equation*}
  TX \coloneqq \Colim_{U \to X} TU
  \,,
\end{equation*}
where the colimit is over the category of plots of $X$ and taken in $\Dflg$. Similarly, we can left Kan extend the contravariant functor that maps an open subset $U \subset \bbR^k$ to the differential graded algebra $\Omega_\mathrm{dR}(U)$ of de Rham forms on $U$,
\begin{equation*}
  \Omega_\mathrm{dR}(X) \coloneqq \Lim_{U \to X} \Omega_\mathrm{dR}(U)
  \,,
\end{equation*}
which is the differential graded algebra of de Rham forms on $X$ \cite[\S 6.28]{IglesiasZemmour:2013}.

The key property of an \emph{elastic} diffeological space is that the fiber products of the tangent bundle and the powers of the tangent functor commute with the colimit of the left Kan extension,
\begin{equation*}
  T_n ( \Colim_{U \to X} U ) = \Colim_{U \to X} T_n U
  \,,\quad
  T^n ( \Colim_{U \to X} U ) = \Colim_{U \to X} T^n U
  \,,
\end{equation*}
where the colimits are taken over the category of plots of $X$. The natural transformations of the tangent structure are given by the induced maps on these colimits. For example, the fiberwise addition $+_X: T_2 X \to TX$ is induced by the addition $+_U: T_2 U \to TU$ on each plot and the universal property of the colimit,
\begin{equation*}
  +_X: T_2 X \cong \Colim_{U \to X} T_2 U
 \xrightarrow{\Colim_U +_U} \Colim_{U \to X} TU \cong TX
  \,.
\end{equation*}
The other natural transformations are induced in a similar way.

Now we can describe cubical forms. Since
\begin{equation*}
\begin{split}
  \Hom(T^n X, \bbR)
  &= \Hom\bigl( \Colim_{p:U \to X} T^n U, \bbR \bigr)
  \\
  &= \Lim_{p:U \to X} \Hom( T^n U, \bbR)
  \,,
\end{split}
\end{equation*}
by elasticity, a map $\omega: T^n X \to \bbR$ is given by a compatible family of maps $\omega_p: T^n U \to \bbR$ for each plot $p: U \to X$. The condition~\eqref{diag:CubForm1and2} that $\omega$ is additive at the first position is the commutativity of the diagram
\begin{equation*}
\begin{tikzcd}[column sep=6em]
  \mathllap{T_2 T^{n-1} X \cong} \Colim\limits_{p:U \to X} T_2 T^{n-1} U
  \ar[r, "{\Colim_p (\omega_p \times \omega_p)}"]
  \ar[d, "{\Colim\limits_{p:U \to X} (+_{T^{n-1} U})}"']
  &
  \bbR \times \bbR
  \arrow[d, "\Radd"] \\
  \mathllap{T^n X \cong} \Colim\limits_{p:U \to X} T^n U
  \ar[r, "\Colim_p \omega_p"']
  &
  \bbR
\end{tikzcd}
  \,.
\end{equation*}
It follows from the universal property of the colimit that $\omega$ is additive if and only if $\omega_p$ is additive for every plot $p$. Similarly, $\omega$ is $\bbR$-equivariant if and only if $\omega_p$ is $\bbR$-equivariant for every $p$. By an analogous argument, we can show that $\omega$ is differential if and only if $\omega_p$ is differential for every $p$. This shows that a differential cubical form on $X$ is a family of differential cubical forms $\omega_p$ representing an element of the left Kan extension
\begin{equation*}
  \Omega_\mathrm{cub}(X) \cong \Lim_{p:U \to X} \Omega_\mathrm{cub}(U)
  \,.
\end{equation*}
In Section~\ref{sec:SmoothMflds}, we have argued that on $U \subset \bbR^k$ the differential cubical forms are isomorphic to the de Rham forms, $\Omega_\mathrm{cub}(U) \cong \Omega_\mathrm{dR}(U)$. We arrive at the following result.

\begin{Proposition}
\label{prop:cubElastic}
The differential cubical forms on an elastic diffeological space $X$ are isomorphic to the de Rham forms,
\begin{equation*}
  \Omega_\mathrm{cub}(X) \cong \Omega_\mathrm{dR}(X)
  \,.
\end{equation*}
\end{Proposition}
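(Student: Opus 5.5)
The isomorphism $\Omega_\mathrm{cub}(X) \cong \Lim_{p:U \to X} \Omega_\mathrm{cub}(U)$ derived just before the statement reduces the claim to two facts. First, the plot-wise isomorphisms $\Omega_\mathrm{cub}(U) \cong \Omega_\mathrm{dR}(U)$ of Section~\ref{sec:SmoothMflds} must be natural in $U$, that is, compatible with pullback along smooth maps $f\colon U \to V$ of euclidean domains. Second, the two limits must agree termwise, so that $\Lim_p \Omega_\mathrm{cub}(U)$ is carried isomorphically onto $\Lim_p \Omega_\mathrm{dR}(U) = \Omega_\mathrm{dR}(X)$. Once naturality is established, the second point is formal: a natural isomorphism of diagrams $\Eucl_{/X}^\op \to \Set$ (or to graded vector spaces) induces an isomorphism of limits.

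For naturality I would make the identification explicit. On $U \subset \bbR^k$, the argument of Section~\ref{sec:SmoothMflds} (antisymmetry together with Lemma~\ref{lem:FormLambdaZero}, since $\bbR$ has no $2$-torsion) shows that a differential cubical $n$-form $\omega$ factors through the comparison morphism $(\mu_n)_U\colon T^nU \to T_nU$. So $\omega = \bar\omega \circ (\mu_n)_U$ for a unique antisymmetric multilinear $\bar\omega\colon T_nU \to \bbR$, and uniqueness holds because $\mu_n$ is a split epimorphism in coordinates. Conversely, $\bar\omega \mapsto \bar\omega \circ \mu_n$ lands in differential cubical forms: this is the coordinate check that $\Val_\bbR \circ T(\bar\omega\circ\mu_n) \circ \lambda$ only sees the $x_{1\ldots}$-linear terms. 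The map $\bar\omega\mapsto\bar\omega\circ\mu_n$ is natural, because $\mu_n$ is a natural transformation: for $f\colon U\to V$ we have $(\mu_n)_V \circ T^nf = T_nf \circ (\mu_n)_U$. Hence $f^*(\bar\omega\circ\mu_n) = (f^*\bar\omega)\circ\mu_n$, where on the left $f^*$ is precomposition with $T^n f$ and on the right it is the usual pullback of de Rham forms, namely precomposition with $T_nf$. This gives the natural isomorphism of diagrams degreewise.

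The step needing the most care is verifying that the limit $\Lim_p$ in the cubical description is indexed and structured exactly as in the definition $\Omega_\mathrm{dR}(X)=\Lim_p\Omega_\mathrm{dR}(U)$ of \cite[\S 6.28]{IglesiasZemmour:2013}: same plot category, same transition maps. This follows from the universal property of $\Colim_p T^nU$ used above. If one also wants the isomorphism to respect the Cartan calculus, I would add that wedge product, $d$, and $\iota$ are defined plot-wise on both sides and coincide on euclidean domains. Equivalently, both are Cartan calculi on $U$ isomorphic to the Lie--Rinehart one, so by Proposition~\ref{prop:LRterminal} the comparison maps are the unique ones and are therefore compatible with pullbacks, and the structure passes to the limit.
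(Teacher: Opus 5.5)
Your argument is correct and follows the paper's route. Elasticity identifies $\Omega_\mathrm{cub}(X)$ with $\Lim_{p}\Omega_\mathrm{cub}(U)$, and the euclidean identification of Section~\ref{sec:SmoothMflds} is then applied plot by plot. Your observation that this identification is precomposition with the natural transformation $\mu_n$ supplies the naturality in $U$, which the paper leaves implicit.

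The only slip is in your optional final remark about the Cartan calculus. Inner derivatives are not defined plot-wise, because vector fields on $X$ do not restrict to plots, as the paper notes at the end of Section~\ref{sec:Elastic}. Proposition~\ref{prop:LRterminal} also cannot force compatibility with pullbacks, since pullback along a plot is not a morphism of Cartan calculi. So only the compatibility with $\wedge$ and $d$ passes to the limit.
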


Since the symmetric structure on $\Dflg$ is also obtained by left Kan extension, the ring structure of $\Omega_\mathrm{cub}(X)$ is the left Kan extension of the ring structure of $\Omega_\mathrm{dR}(U)$ for all plots. Similarly, the differential $d$ on cubical forms is the left Kan extension of the de Rham differential on plots. We conclude that the isomorphism of Proposition~\ref{prop:cubElastic} is an isomorphism of differential graded algebras.

The assignment $U \mapsto \calX(U)$ of the Lie algebra of vector fields is not functorial on the category of plots. Therefore, the vector fields $\calX(X)$ on a diffeological space $X$ and their inner derivatives cannot be obtained by left Kan extension. The Cartan calculus on $\Omega_\mathrm{cub}(X)$, and hence on $\Omega_\mathrm{dR}(X)$ by Proposition~\ref{prop:cubElastic}, is a genuinely new structure that arises from the theory of differential cubical forms on cartesian tangent categories with scalar multiplication.

\bibliographystyle{alpha}
\bibliography{Cartan}

\end{document}